\newtheorem{theorem}{Theorem}[section]
\newtheorem{lemma}[theorem]{Lemma}
\newtheorem{corollary}[theorem]{Corollary}
\newtheorem{proposition}[theorem]{Proposition}
\newtheorem{conjecture}[theorem]{Conjecture}
\newtheorem*{assertion*}{Assertion *}
\newtheorem{assertion}{Assertion}
\theoremstyle{definition}
\newtheorem{problem}[theorem]{Problem}
\theoremstyle{remark}
\newcommand{\vf}{\varphi}
\newcommand{\ovf}{f^{G,v}}
\newcommand{\abs}[1]{\left\lvert#1\right\rvert}
\newcommand{\sst}[2]{\left\{#1\,:\,#2\right\}}
\newcommand{\set}[1]{\left\{#1\right\}} 
\newcommand{\sset}[1]{\mathopen{\llbracket}#1\mathclose{\rrbracket}}
\newcommand{\intervalle}[4]{\mathopen{#1}#2\mathpunct{},#3\mathclose{#4}}
\newcommand{\icc}[2]{\intervalle{[}{#1}{#2}{]}}
\newcommand{\iset}[2]{\intervalle{\llbracket}{#1}{#2}{\rrbracket}}
\DeclareMathOperator{\I}{\mathcal{I}}
\DeclareMathOperator{\Pro}{Prob}
\DeclareMathOperator{\Ee}{E}
\renewcommand{\le}{\leqslant}
\renewcommand{\ge}{\geqslant}
\title{Subcubic triangle-free graphs have fractional chromatic number at most
$14/5$\thanks{%
This research was supported by the Czech-French Laboratory STRUCO and the Czech-French
bilateral project MEB 021115 (French reference PHC Barrande 24444XD)}}
\author{Zdeněk Dvořák\thanks{%
Computer Science Institute of Charles University, Prague, Czech Republic.
E-mail: \texttt{rakdver@iuuk.mff.cuni.cz}. Supported by the Center of
Excellence -- Inst. for Theor. Comp. Sci., Prague, project P202/12/G061 of
Czech Science Foundation.}
\and
Jean-Sébastien Sereni\thanks{%
CNRS (LORIA), Vand\oe uvre-lès-Nancy, France,
and Czech-French Laboratory STRUCO, CNRS and I\'UUK, Charles University,
Prague, Czech Republic.
E-mail: \texttt{sereni@kam.mff.cuni.cz}. This author's work was partially
supported by the French \emph{Agence Nationale de la Recherche} under reference
\textsc{anr 10 jcjc 0204 01}.}
\and
Jan Volec\thanks{%
Computer Science Institute of Charles University, Prague,
Czech Republic, and LIAFA, Univ. Paris Diderot, Paris, France.
Since January 2013, this author is a Ph.D. student at University of Warwick.
E-mail: \texttt{honza@ucw.cz}.
This author's work was supported by a grant of the French Government.}}
\date{}
\begin{document}

\maketitle

\begin{abstract}
We prove that every subcubic triangle-free graph has fractional chromatic number at most $14/5$,
thus confirming a conjecture of Heckman and Thomas~[A new proof of the
independence ratio of triangle-free cubic graphs. Discrete Math. 233 (2001), 233--237].
\end{abstract}

\section{Introduction}
One of the most celebrated results in Graph Theory is the Four-Color Theorem
(4CT). It states that every planar graph is 4-colorable.
It was solved by Appel and Haken~\cite{ApHa77,AHK77,ApHa89} in 1977 and,
about twenty years later, Robertson, Sanders, Seymour and Thomas~\cite{RSST97} found a new (and much simpler)
proof. However, both of the proofs require a computer assistance, and finding a fully
human-checkable proof is still one of the main open problems in Graph Theory.
An immediate corollary of the 4CT implies that every $n$-vertex planar graph
contains an independent set of size $n/4$ (this statement is sometimes called
the Erd\H os-Vizing conjecture). Although this seems to be an easier problem than
the 4CT itself, no proof without the 4CT is known. The best known result that
does not use the 4CT is due to Albertson~\cite{Alb76}, who showed the existence
of an independent set of size $2n/9$.

An intermediate step between the 4CT and the Erd\H os-Vizing conjecture is the
fractional version of the 4CT --- every planar graph is fractionally 4-colorable. In
fact, fractional colorings were introduced in 1973~\cite{HiRaSc73} as an
approach for either disproving, or giving more evidence to, the~4CT.  For a real
number $k$, a graph $G$ is fractionally $k$-colorable, if for every assignment
of weights to its vertices there is an independent set that contains at least
$(1/k)$-fraction of the total weight. In particular, every fractionally
$k$-colorable graph on $n$ vertices contains an independent set of size at
least $n/k$. The existence of independent sets of certain ratios in
\emph{subcubic} graphs, i.e., graphs with maximum degree at most $3$, led
Heckman and Thomas to pose the following two conjectures (a graph is called \emph{triangle-free} if it does not contain a triangle as
a subgraph).
\begin{conjecture}[Heckman and Thomas~\cite{HeTh01}]
\label{conjecture}
Every subcubic triangle-free graph is fractionally $14/5$-colorable.
\end{conjecture}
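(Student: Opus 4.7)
My proof plan is to argue by contradiction, taking $G$ to be a subcubic triangle-free graph with fractional chromatic number $\chi_f(G) > 14/5$ that minimises $|V(G)|$, and showing that $G$ must contain a ``reducible'' substructure which, together with the inductive hypothesis applied to a smaller subcubic triangle-free graph, forces $\chi_f(G) \leq 14/5$ after all.

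The key equivalent reformulation I would exploit is that $\chi_f(G)\leq 14/5$ iff there exists a probability distribution on the independent sets of $G$ covering every vertex with probability at least $5/14$. A configuration $H \subseteq G$ is then \emph{reducible} if, whenever the graph $G'$ obtained by deleting or contracting $H$ admits such a distribution, one can locally redistribute inside $H$ to extend it to all of $G$. Verifying reducibility of a given $H$ reduces to checking feasibility of a finite linear program over the polytope of distributions on independent sets of $H$ with prescribed marginals on the boundary vertices (those of $H$ that retain neighbours outside $H$).

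Concretely I would proceed as follows. \emph{Step 1 (easy reductions):} show that $G$ is connected, that every vertex has degree exactly $3$ (lower-degree vertices are trivially reducible by extending the distribution obtained from $G-v$), and that $G$ has no $4$-cycle (such a cycle splits into two independent pairs and can be contracted). \emph{Step 2 (a catalogue of reducible configurations):} classify short-cycle and small-neighbourhood configurations --- $5$-cycles attached to the rest of the graph by few edges, pairs of close short cycles, certain trees of short cycles --- and establish reducibility of each via the linear-programming analysis above. \emph{Step 3 (global argument):} show, by a discharging or direct counting argument exploiting $3$-regularity and triangle-freeness, that a subcubic triangle-free graph avoiding every configuration in the catalogue cannot exist, contradicting the existence of~$G$.

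The principal obstacle will be Step~2. The bound $14/5$ is tight (attained by the Petersen graph), so reducibility proofs have essentially no slack: the space of admissible boundary marginals one must cover is large, and for each candidate configuration the linear-program verification requires a delicate understanding of which independent-set patterns can appear and how much marginal weight one can push onto each boundary vertex. I also expect Step~3 to be nontrivial, because without planarity the standard Euler-formula discharging does not apply, so one must instead leverage the rigidity imposed by cubicness together with the girth constraints produced by Step~2 --- perhaps via a discharging rule that moves charge along short cycles --- to derive the final contradiction.
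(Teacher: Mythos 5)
Your high-level skeleton (minimal counterexample, establish $3$-regularity and large girth via local reductions, then a global counting argument) matches the shape of the paper's proof, but there is a serious gap in Step~1 that propagates through the rest of the plan. You assert that ``lower-degree vertices are trivially reducible by extending the distribution obtained from $G-v$.'' This is false, and it is exactly where most of the paper's work lies. If $v$ has degree two with neighbours $x,y$, a distribution on independent sets of $G-v$ hitting each vertex with probability $\ge 5/14$ need not extend: $v$ can only be inserted into independent sets avoiding both $x$ and $y$, and nothing forces $\Pro[x\notin I \text{ and } y\notin I]\ge 5/14$, since $x$ and $y$ are not adjacent in $G-v$ and their presence events can be made disjoint. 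Worse, deleting $v$ demotes $x$ and $y$ to degree two as well, so one cannot simply iterate. The paper copes by proving a genuinely stronger statement: it introduces degree-dependent demand functions $f_B$ (demand $(8-\deg)/14$ outside a distinguished set $B$, demand $(7-\deg)/14$ inside $B$) and a carefully crafted side condition (the ``nail'' $B$), designed so that the inductive hypothesis gives boundary vertices \emph{extra} coverage that can be siphoned off when reattaching deleted vertices. The nail condition, in turn, exists because this strengthened demand is infeasible on $5$-cycles and on the subdivided $K_4$ when too many of their special vertices have degree two --- these ``dangerous'' subgraphs are the obstruction your plan never confronts. Without some strengthening of this kind, your Step~2 linear-programming checks will simply report infeasibility for boundary marginals that actually occur.

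Two smaller points. Your proposed $4$-cycle reduction by contraction does not work cleanly in a cubic graph: contracting an antipodal pair $u,x$ of a $4$-cycle $uvxy$ creates a double edge to each of $v,y$; the paper instead deletes all four cycle vertices and uses a Tuza--Voigt set-selection argument to rebuild the colouring. And your Step~3 need not be discharging at all: once $3$-regularity and girth $\ge 5$ are in hand, the paper closes with a one-line averaging identity $\sum_v W_v=0$ where $W_v=9w(v)-5\sum_{d(u,v)=1}w(u)+\sum_{d(u,v)=2}w(u)$, picks $v$ with $W_v\ge 0$, deletes its closed neighbourhood, and invokes the inductive hypothesis for the demand function $f_\varnothing$ on the remainder --- again relying on the $f_B$ machinery you would not have available.
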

\begin{conjecture}[Heckman and Thomas~\cite{HeTh06}]
\label{conjecture-planar}
Every subcubic triangle-free planar graph is fractionally $8/3$-colorable.
\end{conjecture}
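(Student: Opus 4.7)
The plan is to mimic the broad strategy used for Conjecture~\ref{conjecture} but to exploit planarity in order to push the fractional bound down from $14/5$ to $8/3$. By LP duality, it suffices to show that for every planar subcubic triangle-free graph $G$ and every weight function $w\colon V(G)\to\mathbf{Q}_{\ge 0}$ there is an independent set $I$ with $w(I)\ge (3/8)\,w(V(G))$. I would consider a counterexample $(G,w)$ minimizing $\abs{V(G)}$, with $\sum_v w(v)$ as a secondary parameter (with $w$ restricted to a common denominator). Standard reductions give that $G$ is $2$-connected, $w$ is strictly positive, and $G$ has no vertex of degree at most one. Note that this is the fractional analog of Heckman--Thomas's integer $3/8$ independence-ratio theorem for planar subcubic triangle-free graphs, so I would first aim to revisit that integer proof and transfer each structural ingredient to the weighted setting.

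The core combinatorial step is to establish a list of \emph{reducible configurations}: small subgraphs $H\subseteq G$ equipped with a local independent set $J\subseteq V(H)$ whose weighted contribution already meets the $3/8$ threshold, allowing $J$ to be placed into the target independent set and the problem to be passed to $G-N[J]$ with smaller weight. Candidate configurations include two adjacent $2$-vertices, short paths of $2$-vertices, specific $4$-faces and $5$-faces carrying prescribed patterns of $2$- and $3$-vertices, and configurations analogous to those that drive the $14/5$ bound but tightened by one extra face in the plane embedding. Each reduction must be verified to preserve both the subcubic and triangle-free properties. Once a sufficient list is in hand, I would run a discharging argument on the plane embedding: assign initial charges $\mu(v)=d(v)-4$ to vertices and $\mu(f)=\ell(f)-4$ to faces, so Euler's formula gives total charge $-8$; triangle-freeness already makes every face charge non-negative, and the role of the discharging rules is to move charge from $4^+$-faces to low-degree vertices so that every vertex charge becomes non-negative, contradicting the total.

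The hard part will be the reducibility list. The bound $3/8$ is essentially tight, witnessed for instance by the Petersen graph (in the non-planar setting) and various small dodecahedral-type planar fragments, so most local weight inequalities one writes down are barely false; finding the correct reducible configurations, and especially the right \emph{fractional} certificates that combine several local independent sets with carefully chosen coefficients, is precisely where Conjecture~\ref{conjecture-planar} has resisted attack. A secondary obstacle is that splitting or contracting vertices to handle $2$-vertices in the weighted setting can introduce triangles or break planarity, so each operation must be recast as a direct argument on $(G,w)$ rather than on a modified graph. My expectation is that substantial computer-assisted case analysis over local configurations of radius two, coupled with a delicately tuned discharging scheme exploiting the fact that every face has length at least $4$ (and, after excluding certain reducible configurations, many faces have length at least $5$), would be required to push the argument through.
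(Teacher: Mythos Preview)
The paper does \emph{not} prove Conjecture~\ref{conjecture-planar}; it is stated there as an open problem, and the authors explicitly say only that they ``believe that the method developed in this paper may be relevant'' for it. So there is no proof in the paper to compare your proposal against.

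Your proposal, in turn, is not a proof but a research plan. You outline a minimal-counterexample-plus-discharging strategy, but you yourself identify the crux---finding a complete list of reducible configurations with correct fractional certificates---as unresolved, writing that this ``is precisely where Conjecture~\ref{conjecture-planar} has resisted attack'' and that you expect ``substantial computer-assisted case analysis \ldots\ would be required to push the argument through.'' A sketch that concedes its own central step is open cannot be accepted as a proof. A smaller side remark: your tightness witnesses are off. The Petersen graph has fractional chromatic number $5/2<8/3$, so it does not witness sharpness of the $8/3$ bound even in the non-planar setting, and the dodecahedron likewise has independence ratio $2/5>3/8$; if you pursue this line you will need to identify the actual extremal planar examples to calibrate the discharging rules.
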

The purpose of this work is to establish Conjecture~\ref{conjecture}.
We believe that the method developed in this paper may be relevant for other fractional colouring
problems, and in particular for Conjecture~\ref{conjecture-planar}.

\subsection{History of the problem and related results}
Unlike for general planar graphs, colorings of triangle-free planar
graphs are well understood. Already in 1959, Grötzsch~\cite{Gro59}
proved that every triangle-free planar graph is $3$-colorable.
Therefore, such a graph on $n$ vertices has to contain an independent set of
size $n/3$. In 1976, Albertson, Bollob\'as and Tucker~\cite{ABT76} conjectured
that a triangle-free planar graph
also has to contain an independent set of size strictly larger than $n/3$.

Their conjecture was confirmed in 1993 by Steinberg and Tovey~\cite{StTo93},
even in a stronger sense: such a graph admits a 3-coloring where at least $\lfloor n/3 \rfloor + 1$
vertices have the same color. On the other hand, Jones~\cite{Jon84} found an
infinite family of triangle-free planar graphs with maximum degree four and
no independent set of size $\lfloor n/3 \rfloor + 2$.  However, if
the maximum degree is at most three, then Albertson et al.~\cite{ABT76}
conjectured that an independent set of size much larger than $n/3$ exists. Specifically,
they asked whether there is a constant $s \in \left(\frac{1}{3},\frac{3}{8} \right]$,
such that every subcubic triangle-free planar graph contains an independent set
of size $s n$. We note that for $s>3/8$ the statement would not be true, even
for graphs of girth five.

The strongest possible variant of this conjecture, i.e., for $s=3/8$, was finally
confirmed by Heckman and Thomas~\cite{HeTh06}. However, for $s=5/14$, it was
implied by a much earlier result of Staton~\cite{Sta79}, who actually showed that
every subcubic triangle-free (but not necessarily planar) graph contains an
independent set of size $5n/14$.  Jones~\cite{Jon90} then found a simpler proof
of this result; an even simpler one is due to Heckman and Thomas~\cite{HeTh01}. On
the other hand, Fajtlowicz~\cite{Faj78} observed that one cannot prove anything
larger than $5n/14$.  In 2009, Zhu~\cite{Zhu09} used an approach similar to that
of Heckman and Thomas to demonstrate that every $2$-connected subcubic
triangle-free $n$-vertex graph contains an induced bipartite subgraph of order
at least $5n/7$ except the Petersen graph and the dodecahedron --- thus
Staton's bound quickly follows.
As we already mentioned,
the main result of this paper is the strengthening of Staton's theorem to the fractional
(weighted) version, which was conjectured by Heckman and Thomas~\cite{HeTh01}.

This conjecture attracted a considerable amount of attention and it spawned a
number of interesting works in the last few years. In 2009, Hatami and
Zhu~\cite{HaZh09} showed that for every graph that satisfies the assumptions of
Conjecture~\ref{conjecture}, the fractional chromatic number is at most $3-3/64
\approx 2.953$. (The fractional chromatic number of a graph is the smallest
number $k$ such that the graph is fractionally $k$-colorable.)  The result of
Hatami and Zhu is the first to establish that the fractional chromatic number of
every subcubic triangle-free graph is smaller than $3$.  In 2012, Lu and
Peng~\cite{LuPe12} improved the bound to $3-3/43 \approx 2.930$.  There are
also two very recent improvements on the upper bound --- but with totally
different approaches. The first one is due to Ferguson, Kaiser and
Kr\'al'~\cite{FKK}, who showed that the fractional chromatic number is at most
$32/11 \approx 2.909$.  The other one is due to Liu~\cite{Liu}, who improved
the upper bound to $43/15 \approx 2.867$.

\section{Preliminaries}
\label{sec:prem}
We start with another definition of a fractional coloring that will be used in the paper.
It is equivalent to the one mentioned in the previous section by Linear Programming Duality;
a formal proof is found at the end of this section in Theorem~\ref{thm-eqchar}.
There are also another different (but equivalent) definitions of
a fractional coloring and the fractional chromatic number; for more details see,
e.g., the book of Scheinerman and Ullman~\cite{ScUl97}.

Let $G$ be a graph.
A \emph{fractional $k$-coloring} is an assignment of measurable subsets of the
interval $\icc{0}{1}$ to the vertices of $G$ such that each vertex is assigned
a subset of measure $1/k$ and the subsets assigned to adjacent vertices are
disjoint. The \emph{fractional chromatic number of~$G$} is the infimum over all
positive real numbers $k$ such that $G$ admits a fractional $k$-coloring. Note
that for finite graphs, such a real $k$ always exists, the infimum is in fact a
minimum, and its value is always rational. We let $\chi_f(G)$ be this minimum.

A \emph{demand function} is a function from $V(G)$ to
$\icc{0}{1}$ with rational values.
A \emph{weight function} is a function from $V(G)$ to the real numbers.  A weight function is \emph{non-negative}
if all its values are non-negative.  For a weight function $w$ and a set $X\subseteq V(G)$, let $w(X)=\sum_{v\in X} w(v)$.
For a demand function $f$, let $w_f=\sum_{v\in V(G)} f(v)w(v)$.

Let $\mu$ be the Lebesgue measure on real numbers.
An \emph{$f$-coloring} of $G$ is an assignment $\vf$ of measurable subsets of
$\icc{0}{1}$ to the vertices of $G$ such that
$\mu(\vf(v))\ge f(v)$ for every $v\in V(G)$ and such that $\vf(u)\cap \vf(v)=\varnothing$
whenever $u$ and $v$ are two adjacent vertices of $G$.
A positive integer $N$ is a \emph{common denominator} for $f$ if $N\cdot f(v)$ is an integer
for every $v\in V(G)$.  For integers $a$ and $b$, we define
$\iset{a}{b}$ to be the set $\{a,a+1,\ldots,b\}$, which is empty if $a>b$;
we set $\sset{a}=\iset{1}{a}$.
Let $N$ be a common denominator for $f$ and $\psi$ a function from $V(G)$ to
subsets of $\sset{N}$.
We say that $\psi$ is an \emph{$(f, N)$-coloring} of $G$ if $\abs{\psi(v)}\ge Nf(v)$ for every $v\in V(G)$ and
$\psi(u)\cap \psi(v)=\varnothing$ whenever $u$ and $v$ are adjacent vertices
of $G$.

Let us make a few remarks on these definitions.
\begin{itemize}
\item If $G$ has an $(f, N)$-coloring, then it also has an $(f, M)$-coloring for every $M$ divisible by $N$,
obtained by replacing each color by $M/N$ new colors.  Consequently, the
following statement, which is occasionally useful
in the proof, holds: if a graph $G_1$ has an $(f_1, N_1)$-coloring and a graph $G_2$ has an $(f_2, N_2)$-coloring, then there
exists an integer $N$ such that $G_1$ has an $(f_1, N)$-coloring and $G_2$ has an $(f_2, N)$-coloring.
\item For a rational number $r$, the graph $G$ has fractional chromatic number at most $r$ if and only if it has an $f_r$-coloring for the function $f_r$
that assigns $1/r$ to every vertex of $G$.  If $rN$ is an integer, then an $(f_r,N)$-coloring is usually called an $(rN:N)$-coloring in the literature.
\item In the definition of an $(f, N)$-coloring,
we can require that $\abs{\psi(v)}=Nf(v)$ for each vertex, as if $\abs{\psi(v)}>Nf(v)$, then we can remove colors from $\psi(v)$.
In particular, throughout the argument, whenever we receive an $(f,
N)$-coloring from an application of an inductive hypothesis,
we assume that the equality holds for every vertex.
\end{itemize}

To establish Theorem~\ref{thm-maingen}, we use several characterizations of
$f$-colorings. For a graph $G$, let $\I(G)$
be the set of all maximal independent sets.
Let \textsc{fracc} be the following linear program.

\begin{alignat*}{2}
    \text{Minimize: }    & \sum_{I \in \I(G)} x(I) \\
    \text{subject to: }  & \sum_{\substack{I \in \I(G)\\v\in I}} x(I) \ge f(v) &&
    \quad \text{for $v \in V(G)$;} \\
    & x(I) \ge 0 && \quad \text{for $I \in \I(G)$.}
\end{alignat*}
Furthermore, let \textsc{fracd} be the following program, which is the dual of
\textsc{fracc}.
\begin{alignat*}{2}
    \text{Maximize: }    & \sum_{v \in V(G)} f(v)\cdot y(v) \\
    \text{subject to: }  & \sum_{\phantom{G}v \in I\phantom{(G}} y(v) \le 1 &&
    \quad \text{for $I \in \I(G)$;} \\
    & y(v) \ge 0 && \quad \text{for $v \in V(G)$.}
\end{alignat*}
Notice that all the coefficients are rational numbers. Therefore, for both
programs there exist optimal solutions that are rational.
Moreover, since these two linear programs are dual of each other, the LP-duality
theorem ensures that they have the same value.
(The reader is referred to,
e.g., the book by Scheinerman and Ullman~\cite{ScUl97} for more details on
fractional graph theory.)

The following statement holds by standard arguments; the proof is included for
completeness.
\begin{theorem}\label{thm-eqchar}
Let $G$ be a graph and $f$ a demand function for $G$.  The following
statements are equivalent.
\begin{enumerate}
\item[\textrm{(a)}] The graph $G$ has an $f$-coloring.
\item[\textrm{(b)}] There exists a common denominator $N$ for $f$ such that $G$ has an $(f, N)$-coloring.
\item[\textrm{(c)}] For every weight function $w$, the graph $G$ contains an independent set $X$ such that $w(X)\ge w_f$.
\item[\textrm{(d)}] For every non-negative weight function $w$, the graph $G$ contains an independent set $X$ such that $w(X)\ge w_f$.
\end{enumerate}
\end{theorem}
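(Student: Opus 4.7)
The plan is to close the loop $\text{(b)}\Rightarrow\text{(a)}\Rightarrow\text{(d)}\Rightarrow\text{(b)}$ and treat $\text{(c)}\Leftrightarrow\text{(d)}$ separately; since (c) trivially implies (d), only the other direction of that equivalence needs an argument. The implication $\text{(b)}\Rightarrow\text{(a)}$ is immediate: given an $(f,N)$-coloring $\psi$, setting $\varphi(v):=\bigcup_{i\in\psi(v)}\icc{(i-1)/N}{i/N}$ yields an $f$-coloring, because each $\varphi(v)$ is measurable with measure $\abs{\psi(v)}/N\ge f(v)$, and the disjointness of $\psi(u)$ and $\psi(v)$ on adjacent pairs translates to disjointness of the corresponding unions of intervals.

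For $\text{(a)}\Rightarrow\text{(d)}$, I would fix an $f$-coloring $\varphi$ and a non-negative weight $w$, and for $t\in\icc{0}{1}$ set $I_t:=\sst{v\in V(G)}{t\in\varphi(v)}$; this is an independent set, because $\varphi(u)$ and $\varphi(v)$ are disjoint whenever $uv$ is an edge. Fubini's theorem gives
$$\int_0^1 w(I_t)\,dt = \sum_{v\in V(G)}w(v)\,\mu(\varphi(v))\ge \sum_{v\in V(G)}w(v)f(v)=w_f,$$
so some $t$ achieves $w(I_t)\ge w_f$, and $I_t$ may be extended to a maximal independent set if needed. For $\text{(d)}\Rightarrow\text{(c)}$, given an arbitrary weight $w$, I would apply (d) to $w^+(v):=\max(w(v),0)$ to obtain an independent set $X$ with $w^+(X)\ge\sum_v f(v)w^+(v)\ge w_f$, using $f\ge 0$; then $X':=\sst{v\in X}{w(v)\ge 0}$ remains independent and satisfies $w(X')=w^+(X)\ge w_f$.

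The main step, and where the real content of the theorem lies, is $\text{(d)}\Rightarrow\text{(b)}$, which I would establish via linear programming duality. The key observation is that $G$ admits an $(f,N)$-coloring for some $N$ if and only if the optimum of \textsc{fracc} is at most~$1$. Given a rational optimal solution $x$ of value at most $1$, I would pick a common denominator $N$ for both $f$ and $x$; then each $Nx(I)$ is a non-negative integer with $\sum_I Nx(I)\le N$, so one can choose pairwise disjoint blocks $B_I\subseteq\sset{N}$ of sizes $Nx(I)$ and define $\psi(v):=\bigcup_{I\in\I(G),\,I\ni v}B_I$, after which the \textsc{fracc} constraint gives $\abs{\psi(v)}=\sum_{I\ni v}Nx(I)\ge Nf(v)$. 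It remains to argue that (d) forces the \textsc{fracd}-optimum to be at most~$1$: if a feasible $y\ge 0$ had $y_f>1$, then applying (d) with $w:=y$ would produce an independent set $X$ with $\sum_{v\in X}y(v)\ge y_f>1$, and extending $X$ to a maximal independent set $X'\in\I(G)$ only increases the sum since $y\ge 0$, contradicting the feasibility of $y$. LP duality, which equates the optima of \textsc{fracc} and \textsc{fracd}, then delivers (b). The only subtlety to monitor is the rationality bookkeeping — ensuring that a single denominator can be chosen simultaneously for $f$ and for the LP optimum — but this is routine since all coefficients in the two linear programs are rational.
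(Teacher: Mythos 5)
Your proposal takes essentially the same route as the paper: the cycle $\mathrm{(b)}\Rightarrow\mathrm{(a)}\Rightarrow\mathrm{(d)}\Rightarrow\mathrm{(b)}$ with LP duality for the last leg and the easy $\mathrm{(c)}\Leftrightarrow\mathrm{(d)}$ handled separately, and your Fubini phrasing of $\mathrm{(a)}\Rightarrow\mathrm{(d)}$ is exactly the paper's randomized averaging argument in different dress. The only slip is in $\mathrm{(b)}\Rightarrow\mathrm{(a)}$: using closed intervals $\icc{(i-1)/N}{i/N}$ makes consecutive blocks share an endpoint, so adjacent vertices could receive non-disjoint $\varphi$-sets; switching to half-open intervals $\ico{(i-1)/N}{i/N}$, as the paper does, repairs this.
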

\begin{proof}
Let us realize that (c) and (d) are indeed equivalent.  On the one hand,
(c) trivially implies (d).  On the other hand, let $w$ be a weight function. For each vertex
$v\in V(G)$, set $w'(v)=\max\{0,w(v)\}$. By (d), there
exists an independent set $I'$ of $G$ such that $w'(I')\ge\sum_{v\in
V(G)}f(v)w'(v)$. Setting $I=\sst{v\in I'}{w(v)>0}$ yields a (possibly empty)
independent set of $G$ with $w(I)\ge w_f$.  Hence, (d) implies (c).

We now prove that $\mathrm{(b)}\Rightarrow\mathrm{(a)}\Rightarrow\mathrm{(d)}\Rightarrow\mathrm{(b)}$.

\begin{description}[listparindent=15pt]
\item[$\mathrm{(b)}\Rightarrow\mathrm{(a)}$:] Assume that $\psi$ is an $(f,N)$-coloring of $G$, where $N$
is a common denominator for $f$. Setting
\[\varphi(v)=\bigcup_{i\in\psi(v)}\left[\frac{i-1}{N}\, ,\frac{i}{N}\right)\]
for each vertex $v\in V(G)$ yields an $f$-coloring of $G$.

\item[$\mathrm{(a)}\Rightarrow\mathrm{(d)}$:] Let $w$ be a non-negative weight function and
assume that $G$ has an $f$-coloring $\psi$. For each set $A\subseteq V(G)$, let
\[X(A)=\bigcap_{v\in A}\psi(v)\setminus \bigcup_{v\in V(G)\setminus A}\psi(v),\]
where $\bigcap_{v\in \varnothing}\psi(v)$ is defined to be $\icc{0}{1}$.
Note that the sets $X(A)\colon A\subseteq V(G)$ are pairwise disjoint and their union is $\icc{0}{1}$.
Let us choose a set $I\subseteq V(G)$ at random so that $\Pro[I=A]=\mu(X(A))$ for each $A\subseteq V(G)$.
Since $\psi$ is an $f$-coloring of $G$, we have $X(A)=\varnothing$ if $A$ is not an independent set, and thus $I$ is an independent set with probability $1$.
Furthermore, $\Pro[v\in I]=\sum_{\set{v}\subseteq A\subseteq V(G)} \mu(X(A))=\mu(\psi(v))\ge f(v)$ for each $v\in V(G)$.
We conclude that
\begin{align*}
\Ee[w(I)]=&\sum_{v\in V(G)}\Pro[v\in I]w(v)\\
\ge&\sum_{v\in V(G)}f(v)w(v)=w_f.
\end{align*}
Therefore, there exists $I\in\I(G)$ with $w(I)\ge w_f$.

\item[$\mathrm{(d)}\Rightarrow\mathrm{(b)}$:] We proceed in two steps. First, we show that, assuming (d),
the value of \textsc{fracc} is at most $1$. Next, we infer the existence of
an $(f,N)$-coloring of $G$ for a common denominator $N$ of $f$.

Let $b$ be the value of \textsc{fracd} and let $y$ be a
corresponding solution.  Note that $y$ is a non-negative weight function for $G$, and thus by (d),
there exists an independent set $I$ of $G$ such that $y(I)\ge y_f=b$. Since $y$ is a feasible solution of \textsc{fracd},
we deduce that $b\le 1$.

By the LP-duality theorem, \textsc{fracd} and \textsc{fracc} have the same value.
Let $x$ be a rational feasible solution of \textsc{fracc} with value at most~$1$.
Fix a common denominator $N$ for $f$ and $x$. An $(f,N)$-coloring $\psi$ of $G$ can
be built as follows. Set
$\I'=\sst{I\in\I(G)}{x(I)>0}$ and let $I_1,\ldots,I_k$ be
the elements of $\I'$. For each $i\in\{1,\ldots,k\}$, set
\[T_i=\biggl\llbracket1+N\cdot\sum_{j=1}^{i-1}x(I_j)\,
,N\cdot\sum_{j=1}^{i}x(I_j)\biggr\rrbracket.\]
Observe that
$\abs{\bigcup_{i=1}^{k}T_i}=\sum_{i=1}^{k}\abs{T_i}=N\cdot\sum_{i=1}^kx(I_i)\le N$.
For each vertex $v\in V(G)$, let $\I(v)=\sst{i\in\sset{k}}{v\in I_i}$
and define $\psi(v)=\bigcup_{i\in\I(v)} T_i$.

The obtained function $\psi$ is
an $(f,N)$-coloring of $G$. Indeed, for each vertex $v\in V(G)$ we have
$\abs{\psi(v)}\ge N\cdot\sum_{i\in\I(v)}x(I_i)\ge N f(v)$.
Moreover, if $u$ and $v$ are two vertices adjacent in $G$, then
$\I(u)\cap\I(v)=\varnothing$ and, consequently,
$\psi(u)\cap\psi(v)=\varnothing$.
\end{description}
\end{proof}

\section{The proof}
We commonly use the following observation.
\begin{proposition}\label{prop-basic}
Let $f$ be a demand function for a graph $G$, let $N$ be a common denominator for $f$ and let $\psi$ be an $(f,N)$-coloring for $G$.
\begin{enumerate}
\item If $xyz$ is a path in $G$, then $\abs{\psi(x)\cup\psi(z)}\le (1-f(y))N$. Equivalently,
$\abs{\psi(x)\cap\psi(z)}\ge (f(x)+f(z)+f(y)-1)N$.
\item If $xvyz$ is a path in $G$, then $\abs{\psi(x)\cap\psi(z)}\le (1-f(v)-f(y))N$.
\end{enumerate}
Conversely, if $f(a)+f(b)\le 1$ for each edge $ab$ of the path and $\psi$ is an $(f,N)$-coloring of $x$ and $z$
satisfying the conditions 1. and 2. above, then $\psi$ can be extended to an $(f,N)$-coloring of the path $xyz$ or $xvyz$,
respectively.
\end{proposition}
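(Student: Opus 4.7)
The forward direction of both items is a direct counting argument using the fact that color sets assigned to adjacent vertices are disjoint. For item~1, since $y$ is adjacent to both $x$ and $z$, the sets $\psi(x)$ and $\psi(z)$ are contained in $\sset{N}\setminus\psi(y)$, and so $\abs{\psi(x)\cup\psi(z)}\le N-\abs{\psi(y)}\le(1-f(y))N$; the equivalent form follows from $\abs{\psi(x)\cap\psi(z)}=\abs{\psi(x)}+\abs{\psi(z)}-\abs{\psi(x)\cup\psi(z)}$ together with $\abs{\psi(x)}\ge Nf(x)$ and $\abs{\psi(z)}\ge Nf(z)$. For item~2, the edges $xv$ and $yz$ force $\psi(x)\cap\psi(z)$ to be disjoint from $\psi(v)$ and from $\psi(y)$, while the edge $vy$ makes $\psi(v)$ and $\psi(y)$ themselves disjoint; so $\abs{\psi(x)\cap\psi(z)}\le N-\abs{\psi(v)}-\abs{\psi(y)}\le(1-f(v)-f(y))N$.

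For the converse on the path $xyz$, the hypothesis gives $\abs{\sset{N}\setminus(\psi(x)\cup\psi(z))}\ge Nf(y)$, so any $Nf(y)$-subset of this complement may be assigned as $\psi(y)$, producing an $(f,N)$-coloring of the path.

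The converse for $xvyz$ is the only delicate part. Invoking the third remark in Section~\ref{sec:prem}, I first trim $\psi(x)$ and $\psi(z)$ down to exactly $Nf(x)$ and $Nf(z)$ elements, which only weakens the union and intersection on which the hypothesis is placed. Set $P=\psi(x)\setminus\psi(z)$, $Q=\psi(z)\setminus\psi(x)$, and $R=\sset{N}\setminus(\psi(x)\cup\psi(z))$, so that the colors available for $v$ form $Q\cup R$ (the complement of $\psi(x)$) and those available for $y$ form $P\cup R$. I would assign $\psi(v)$ greedily, taking as many colors as possible from $Q$ and completing from $R$, and then $\psi(y)$ from $P$ and from the part of $R$ not yet used by $\psi(v)$. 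This construction yields the desired extension precisely when
\[
\max(0,Nf(v)-\abs{Q})+\max(0,Nf(y)-\abs{P})\le\abs{R}.
\]
When both maxima are positive the inequality rearranges to $N(f(v)+f(y))\le\abs{P}+\abs{Q}+\abs{R}=N-\abs{\psi(x)\cap\psi(z)}$, which is exactly the hypothesis of item~2; when only the first maximum is positive, using $\abs{Q}+\abs{R}=N(1-f(x))$ the condition reduces to the edge inequality $f(v)+f(x)\le 1$, and the remaining case is symmetric. The main (mild) obstacle is precisely this trichotomy: one must notice that the two edge inequalities on the boundary and the intersection bound from item~2 together cover every way in which the greedy construction could fail.
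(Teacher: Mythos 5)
Your proof is correct. The paper states Proposition~\ref{prop-basic} without proof, treating it as a routine observation, so there is no in-paper argument to compare against; your write-up simply supplies the missing verification.

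Both forward counting arguments are standard and accurate. For the converse on $xvyz$, the greedy allocation together with the three-way case analysis is exactly the right computation: when both shortfalls are positive the requirement rearranges to the item~2 bound on $\abs{\psi(x)\cap\psi(z)}$, and when only one shortfall is positive it rearranges to the edge inequality $f(v)+f(x)\le 1$ or $f(y)+f(z)\le 1$. One small wording note: you say trimming $\psi(x)$, $\psi(z)$ to exactly $Nf(x)$, $Nf(z)$ colors ``weakens the union and intersection on which the hypothesis is placed''---more precisely, trimming can only decrease $\abs{\psi(x)\cup\psi(z)}$ and $\abs{\psi(x)\cap\psi(z)}$, so conditions 1 and 2 are preserved (not weakened), and the trimming is genuinely needed so that $\abs{Q}+\abs{R}=N(1-f(x))$ holds with equality, which is what makes the edge-inequality case close. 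That observation is the one non-obvious step, and you have it.
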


\begin{figure}[!t]
\begin{center}
\includegraphics[width=80mm]{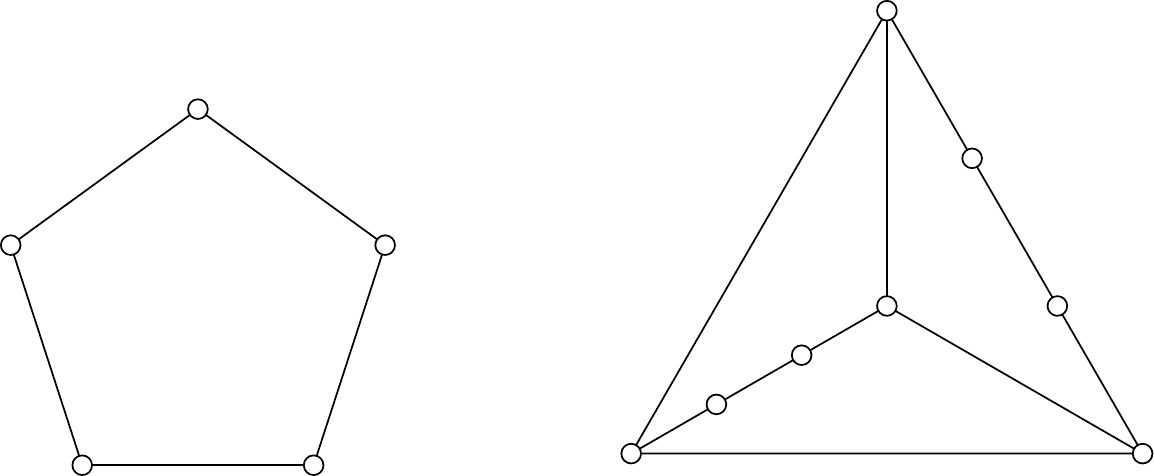}
\end{center}
\caption{Dangerous graphs.}\label{fig-dang}
\end{figure}

A graph $H$ is \emph{dangerous} if $H$ is either a $5$-cycle or the graph $K'_4$ obtained from $K_4$ by subdividing both edges of
its perfect matching twice, see Figure~\ref{fig-dang}.  The vertices of degree two of a dangerous graph are called \emph{special}.
Let $G$ be a subcubic graph and let $B$ be a subset of its vertices.  Let $H$ be a dangerous induced subgraph of $G$.
A special vertex $v$ of $H$ is \emph{$B$-safe} if either $v\in B$ or $v$ has degree three in $G$.
If $B$ is empty, we write just ``safe'' instead of ``$\varnothing$-safe''.
If $G$ is a subcubic graph, a set $B\subseteq V(G)$ is called a \emph{nail} if every vertex in $B$ has degree at most two
and every dangerous induced subgraph of $G$ contains at least two $B$-safe special vertices.  For a subcubic graph $G$ and
its nail $B$, let $f^G_B$ be the demand function defined as follows: if $v\in B$, then $f^G_B(v)=(7-\deg_G(v))/14$;
otherwise $f^G_B(v)=(8-\deg_G(v))/14$.  When the graph $G$ is clear from the context, we drop the superscript and write just $f_B$
for this demand function.

In order to show that every subcubic triangle-free graph has fractional chromatic number at most $14/5$, we prove the following
stronger statement.
\begin{theorem}\label{thm-maingen}
If $G$ is a subcubic triangle-free graph and $B\subseteq V(G)$ is a nail, then $G$ has an $f_B$-coloring.
\end{theorem}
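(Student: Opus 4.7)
The plan is to prove Theorem~\ref{thm-maingen} by contradiction, assuming a counterexample $(G,B)$ chosen to minimize $|V(G)|$ (breaking ties, say, by minimizing $|E(G)|$ and maximizing $|B|$). In such a minimum counterexample I would first collect a list of elementary reductions that force strong constraints on $G$: isolated vertices trivially extend a coloring; a vertex $v\notin B$ of degree at most two can be put into $B$ (lowering its demand by $1/14$), which gives a smaller instance \emph{provided} $B\cup\{v\}$ is still a nail — so this step is really the definition-level check that the two-safe-special-vertex condition survives. Vertices in $B$ of degree at most one are removed and the coloring is extended using Proposition~\ref{prop-basic}. After these reductions we may assume $G$ is connected and nearly cubic, with $B$ sitting on degree-two vertices under rather rigid positions relative to the dangerous subgraphs.

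The second phase is to establish a catalogue of local reducible configurations. A typical such configuration is a short path $P$ of degree-two vertices, a dangerous subgraph $H$ attached to the rest of $G$ through few vertices, or a pair of dangerous subgraphs sharing a vertex. For each, I would delete or contract a carefully chosen piece $S\subseteq V(G)$, replace it by some smaller gadget, define a new nail $B'$ on the reduced graph $G'$, apply the inductive hypothesis to obtain an $f_{B'}$-coloring, and then extend to $G$ using the path-extension clauses of Proposition~\ref{prop-basic}. The whole point of the nail condition — that every dangerous $H$ carries at least two $B$-safe special vertices — is precisely to guarantee that one has enough ``slack'' at the tight 5-cycle and $K'_4$ configurations (the Petersen-type obstructions) to perform these extensions. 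Checking each reducibility amounts to a counting argument using the values $(7-\deg)/14$ and $(8-\deg)/14$ together with the inequalities $|\psi(x)\cap\psi(z)|\ge (f(x)+f(y)+f(z)-1)N$ from Proposition~\ref{prop-basic}.

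Once the list of reductions is complete, the minimum counterexample is an irreducible subcubic triangle-free graph with a very restricted structure: essentially 3-regular except at prescribed degree-two vertices in $B$, with dangerous subgraphs only occurring in tightly constrained configurations. The final step — which I expect to be the main obstacle — is to show that no such graph actually exists. Since $G$ need not be planar, the usual Euler-formula discharging is unavailable; instead I would run a discharging argument based on short cycles or neighborhoods of dangerous subgraphs, assigning initial charge depending on degree and on membership in $B$, and then sending charge toward the dangerous subgraphs using the two guaranteed safe special vertices. The goal is to show that the total redistributed charge in any irreducible configuration is strictly negative, contradicting the trivial global identity. Engineering discharging rules that simultaneously respect all the previously established reductions and exploit exactly the two-safe-special-vertex margin is the technical heart of the argument, and is where the bulk of the case analysis is likely to live.
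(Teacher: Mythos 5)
Your outline of the first two phases—minimal counterexample plus local reducibility—is in the right spirit and does match the paper. (One small wrinkle: the paper's minimality ordering \emph{minimizes} $|B|$ after minimizing $|V(G)|$, which is what lets it push to $B=\varnothing$ via Lemma~\ref{lemma-nob}; your proposal to \emph{maximize} $|B|$ would run the induction in the wrong direction.) The genuine gap is in your third phase, and it is not just a technical difficulty: it is an impossibility. After all the local reductions, the paper shows a minimal counterexample must be $3$-regular with girth at least five and $B=\varnothing$. Such graphs certainly exist (the Petersen graph, the dodecahedron, and infinitely many others), so there is no hope of showing ``no such irreducible graph exists'' by discharging or by any other means. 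What must be shown instead is that every such graph \emph{is} $f_\varnothing$-colorable, and for that you need a mechanism to extract a fractional coloring globally, not a nonexistence proof.

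The tool the paper uses for this, and which is absent from your proposal, is the LP-duality characterization in Theorem~\ref{thm-eqchar}(c)--(d): $G$ has an $f$-coloring if and only if for every (non-negative) weight function $w$ there is an independent set $X$ with $w(X)\ge w_f$. With this in hand, the final step (and also Lemma~\ref{lemma-3reg}) becomes a simple \emph{averaging} argument rather than a discharging argument on the graph structure. Concretely, for each vertex $v$ one sets $W_v=9w(v)-5\sum_{d(u,v)=1}w(u)+\sum_{d(u,v)=2}w(u)$; since $G$ is cubic with girth $\ge 5$, the closed second neighborhood has the right size and $\sum_v W_v=(9-15+6)\sum_v w(v)=0$, so some $W_v\ge 0$. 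Deleting $v$ and its three neighbors yields a smaller graph with $\varnothing$ still a nail, the inductive hypothesis gives a heavy independent set $P$ there, and $P\cup\{v\}$ is the desired independent set in $G$. Your proposed discharging ``toward the dangerous subgraphs'' does not engage with the weight function at all and has no way of producing an independent set, so it could not close the argument even in principle. The reductions you list are the right kind, but you should replace the final phase by the duality-plus-averaging device (and note that the same device is already needed earlier, in the proof that a minimal counterexample is $3$-regular).
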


We point out that the motivation for the formulation of Theorem~\ref{thm-maingen} as well as for some parts of its proof
comes from the work of Heckman and Thomas~\cite{HeTh01}, in which an
analogous strengthening is used to prove that every
subcubic triangle-free graph on $n$ vertices contains an independent set of
size at least $5n/14$.

A subcubic triangle-free graph $G$ with a nail $B$ is a \emph{minimal counterexample to Theorem~\ref{thm-maingen}}
if $G$ has no $f_B$-coloring, and for every subcubic triangle-free graph $G'$ with a nail $B'$ such that
either $\abs{V(G')}<\abs{V(G)}$, or $\abs{V(G')}=\abs{V(G)}$ and $\abs{B'}<\abs{B}$, there exists an $f_{B'}$-coloring of $G'$.
The proof proceeds by contradiction, showing that there is no minimal counterexample to Theorem~\ref{thm-maingen}.
Let us first study the properties of such a hypothetical minimal counterexample.
\begin{lemma}\label{lemma-conn}
If a subcubic triangle-free graph $G$ with a nail $B$ is a minimal counterexample to Theorem~\ref{thm-maingen},
then $G$ is $2$-edge-connected.
\end{lemma}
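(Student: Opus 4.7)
The plan is to verify both defining properties of $2$-edge-connectedness: that $G$ is connected and that $G$ contains no bridge. In each case, I would split $G$ along the relevant separator, apply the minimality of $(G,B)$ to each piece after mildly modifying the nail, and glue the resulting $(f,N)$-colorings back together, contradicting the fact that $G$ admits no $f_B$-coloring.

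Suppose first that $G = G_1 \sqcup G_2$ with both parts non-empty. Since degrees inside a component coincide with the degrees in $G$, and induced subgraphs of $G$ whose vertex set lies in $V(G_i)$ coincide with induced subgraphs of $G_i$, the set $B_i := B\cap V(G_i)$ is a nail in $G_i$ and $f_{B_i}$ agrees with the restriction of $f_B$ to $V(G_i)$. Minimality applied to each (strictly smaller) $G_i$ produces an $f_{B_i}$-coloring of $G_i$, and the disjoint union of the two is an $f_B$-coloring of $G$, a contradiction.

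Assume henceforth that $G$ is connected and that some edge $uv$ is a bridge. Let $G_1$ and $G_2$ be the components of $G - uv$ containing $u$ and $v$, respectively, and set $B_1 := (B\cap V(G_1))\cup\{u\}$ and $B_2 := (B\cap V(G_2))\cup\{v\}$; adjoining $u$ to $B_1$ is legitimate because $\deg_{G_1}(u) = \deg_G(u) - 1 \le 2$, and symmetrically for $v$. To see that $B_i$ is a nail in $G_i$, I would observe that any dangerous induced subgraph $H$ of $G_i$ is also dangerous and induced in $G$ (its vertex set lies on one side of the bridge, so no edge of $G$ is missed), and therefore contains two $B$-safe special vertices; a special vertex distinct from $x_i \in \{u,v\}$ retains both its degree and its $B$-membership in $G_i$, while $x_i$ itself has been put into $B_i$ by construction and is thus $B_i$-safe. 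Minimality, combined with the common-denominator remark from Section~\ref{sec:prem}, then supplies $(f_{B_i},N)$-colorings $\psi_i$ of $G_i$ for a common $N$.

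A short case analysis yields $f_{B_i}(x_i) = (8 - \deg_G(x_i))/14 \ge f_B(x_i)$ and $f_{B_i}(w) = f_B(w)$ at every other vertex, so each $w\in V(G_i)$ already receives at least $Nf_B(w)$ colors under $\psi_i$. The only remaining obstruction is the edge $uv$, for which we need $\psi_1(u)\cap\psi_2(v)=\varnothing$. Using $\deg_G(u),\deg_G(v)\ge 1$,
\[
\abs{\psi_1(u)} + \abs{\psi_2(v)} \;=\; \frac{N\bigl(16 - \deg_G(u) - \deg_G(v)\bigr)}{14} \;\le\; N,
\]
so there exists a permutation $\pi$ of $\sset{N}$ mapping $\psi_2(v)$ into $\sset{N}\setminus\psi_1(u)$. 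Relabelling every set $\psi_2(w)$ through $\pi$ preserves the $(f_{B_2},N)$-coloring property, and $\psi_1 \cup (\pi\circ\psi_2)$ is then an $(f_B,N)$-coloring of $G$, contradicting minimality. The subtlest point, in my view, is the verification that the enlarged $B_i$ remains a nail: one must rule out the possibility that a dangerous induced subgraph of $G_i$ loses a safe special vertex when passing from $G$ to $G_i$, and adding $x_i$ to $B_i$ is precisely the device that compensates for the degree drop of $x_i$ at the bridge.
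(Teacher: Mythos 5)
Your proposal is correct and follows essentially the same approach as the paper: delete the bridge $uv$, adjoin $u$ and $v$ to the respective nails, apply minimality to each piece, and glue the resulting colorings after permuting to make $\psi_1(u)$ and $\psi_2(v)$ disjoint. The paper dispatches connectivity with ``Clearly, $G$ is connected'' and the nail verification with a single ``Note that,'' which you have simply spelled out; no new idea is needed or introduced.
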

\begin{proof}
Clearly, $G$ is connected.  Suppose that $uv\in E(G)$ is a bridge, and let $G_1$ and $G_2$ be the components of $G-uv$
such that $u\in V(G_1)$ and $v\in V(G_2)$.  Let $B_1=(B\cap V(G_1))\cup\{u\}$ and $B_2=(B\cap V(G_2))\cup\{v\}$.
Note that $B_1$ is a nail for $G_1$ and $B_2$ is a nail for $G_2$, and thus by the minimality of $G$,
there exist a common denominator $N$ for $f_{B_1}$ and $f_{B_2}$, an $(f_{B_1}, N)$-coloring $\psi_1$ for $G_1$ and
an $(f_{B_2}, N)$-coloring $\psi_2$ for $G_2$.  Since $u\in B_1$ and $v\in
B_2$, we have $f_{B_1}^{G_1}(u)\le 7/14$
and $f_{B_2}^{G_2}(v)\le 7/14$, thus we can assume (by permuting the colors in $\psi_2$ if necessary) that
$\psi_1(u)$ and $\psi_2(v)$ are disjoint. It follows that the union of $\psi_1$ and $\psi_2$ is
an $(f_B, N)$-coloring of $G$, contrary to the assumption that $G$ is a counterexample.
\end{proof}
\begin{lemma}\label{lemma-mindeg2}
If a subcubic triangle-free graph $G$ with a nail $B$ is a minimal counterexample to Theorem~\ref{thm-maingen},
then $G$ has minimum degree at least two.
\end{lemma}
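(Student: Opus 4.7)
My approach is by contradiction: suppose $G$ contains a vertex $v$ with $\deg_G(v)\le 1$, and split on the degree of $v$.

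In the degree-$0$ case, Lemma~\ref{lemma-conn} forces $G$ to be connected, so $G$ consists solely of $v$. Then $f_B(v)\le 1/2<1$, so any measurable subset of $\icc{0}{1}$ of measure $f_B(v)$ yields an $f_B$-coloring of $G$, contradicting the choice of $G$ as a counterexample. The one-vertex case is therefore immediate, and the only real content is the degree-$1$ case.

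For the degree-$1$ case, the quickest route is to observe that the unique edge $uv$ at $v$ is a bridge and invoke Lemma~\ref{lemma-conn} directly. In the spirit of the rest of the argument, however, I would instead delete $v$ and apply the minimality hypothesis to $G':=G-v$ with nail $B':=(B\setminus\{v\})\cup\{u\}$. The key steps are: (i) check that $B'$ is a nail for $G'$; (ii) apply minimality, using $\abs{V(G')}<\abs{V(G)}$, to obtain an $(f_{B'},N)$-coloring $\psi'$ of $G'$; (iii) extend to $v$ by choosing $\psi(v)\subseteq\sset{N}\setminus\psi'(u)$ of size $Nf_B(v)$. The extension is feasible because $f_B(v)\le 7/14$ and, since $u\in B'$ has $\deg_{G'}(u)=\deg_G(u)-1\le 2$, we also have $f_{B'}(u)\le 7/14$, so the two demands sum to at most $1$.

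The only potential obstacle is the bookkeeping in (i). Here I would use the fact that every vertex of a dangerous graph has degree at least $2$, so the degree-$1$ vertex $v$ cannot belong to any dangerous induced subgraph of $G$. Consequently, the dangerous induced subgraphs of $G'$ are exactly those of $G$, and a previously $B$-safe special vertex $w$ of such a subgraph remains $B'$-safe: either $w\in B\setminus\{v\}\subseteq B'$, or $\deg_G(w)=3$, in which case $w=u$ is handled by our having added $u$ to $B'$, while $w\ne u$ has $\deg_{G'}(w)=\deg_G(w)=3$. The degree-$\le 2$ condition on members of $B'$ is also preserved for the same reason. Everything else reduces to the arithmetic noted above.
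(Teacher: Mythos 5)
Your proof is correct. It covers the same ground as the paper and in fact names the paper's own one-line argument explicitly: a degree-$1$ vertex would create a bridge, which Lemma~\ref{lemma-conn} has already excluded, so the only residual case is the isolated vertex, which one colors directly. The alternative you then develop for the degree-$1$ case — delete $v$, take the nail $B'=(B\setminus\{v\})\cup\{u\}$, color $G-v$ by minimality, and extend to $v$ because $f_B(v)+f_{B'}(u)\le 1$ — is also valid; your check that $v$ lies in no dangerous subgraph (minimum degree two in a $5$-cycle or $K'_4$) and that $B'$-safety of special vertices is preserved is the right bookkeeping, and adding $u$ to the nail absorbs the one vertex whose degree drops. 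Note, though, that this is not really a different route: it is precisely the bridge argument from Lemma~\ref{lemma-conn} specialized to the degenerate cut where one side is a single vertex, so it buys no new leverage and is a little longer than simply citing the $2$-edge-connectivity already established. One small slip worth fixing: for the isolated-vertex case you assert $f_B(v)\le 1/2$, but if $v\notin B$ then $f_B(v)=8/14>1/2$; the bound you actually need and have is $f_B(v)\le 8/14<1$.
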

\begin{proof}
Suppose, on the contrary, that $v$ is a vertex of degree at most one in $G$.
Since $G$ is $2$-edge-connected by Lemma~\ref{lemma-conn}, it follows that
$v$ has degree $0$ and $V(G)=\{v\}$.  However, $\vf(v)=\icc{0}{1}$ is then an $f_B$-coloring of $G$, since $\mu(\vf(v))=1>f_B(v)$.
This contradicts the assumption that $G$ is a counterexample.
\end{proof}
\begin{figure}[!t]
\begin{center}
\includegraphics[width=100mm]{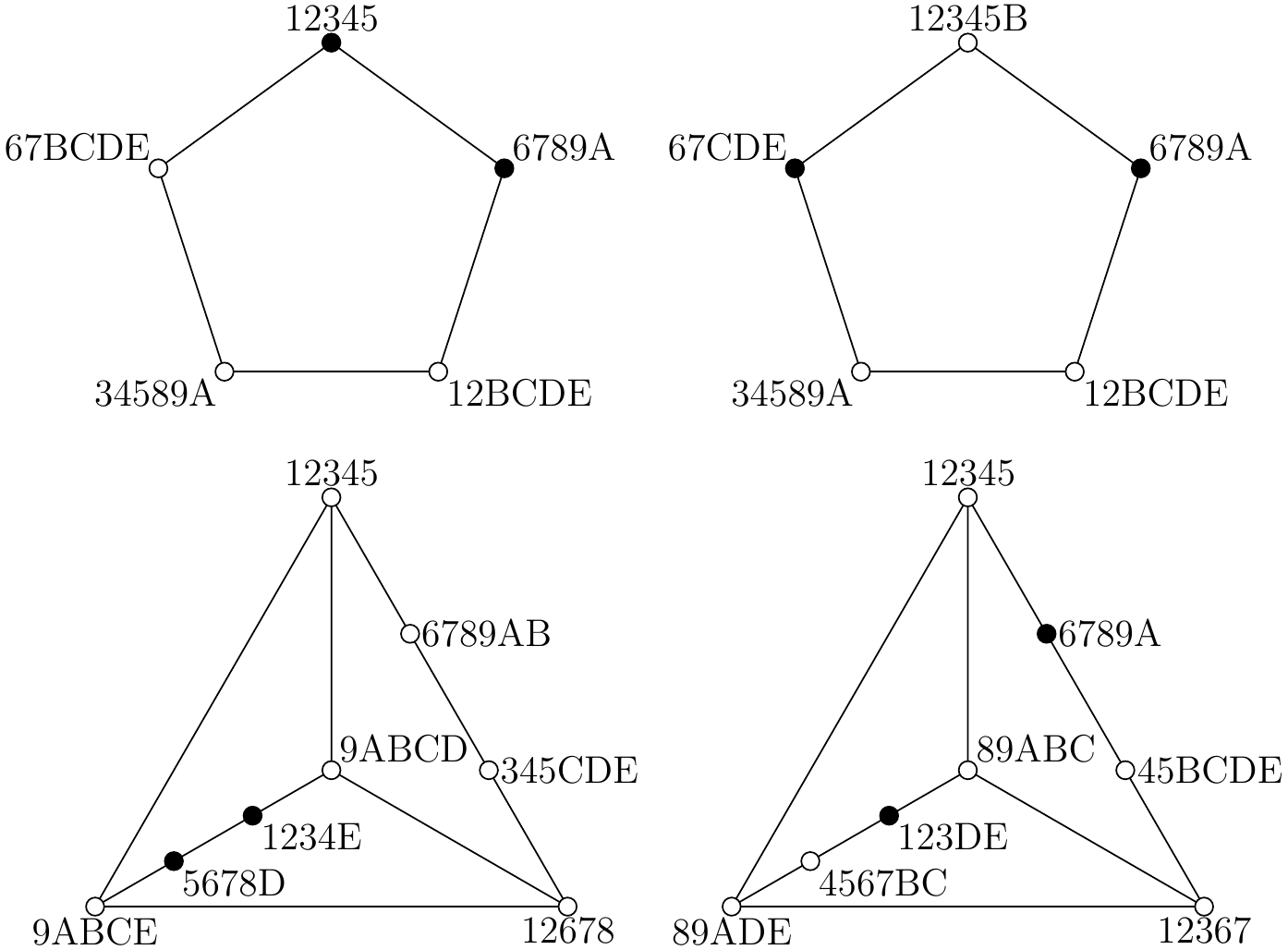}
\end{center}
\caption{Colorings of dangerous graphs with minimal nails.  The nails consist of the black vertices.}\label{fig-coldang}
\end{figure}
\begin{lemma}\label{lemma-nob}
If a subcubic triangle-free graph $G$ with a nail $B$ is a minimal counterexample to Theorem~\ref{thm-maingen},
then $B=\varnothing$.
\end{lemma}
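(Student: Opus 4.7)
The plan is to derive a contradiction by shrinking $B$. Suppose $B \neq \varnothing$ and pick $v \in B$; by the nail condition and Lemma~\ref{lemma-mindeg2}, $\deg_G(v) = 2$. Set $B' = B \setminus \{v\}$. Since $f^G_{B'}$ and $f^G_B$ agree off $v$ while $f^G_{B'}(v) = 6/14 > 5/14 = f^G_B(v)$, any $f^G_{B'}$-coloring of $G$ is \emph{a fortiori} an $f^G_B$-coloring of $G$. Consequently, if $B'$ is already a nail for $G$, then minimality (applied with strictly smaller second coordinate $|B'| < |B|$) yields an $f^G_{B'}$-coloring of $G$, contradicting the hypothesis on $G$.

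It remains to handle the case when $B'$ is not a nail, that is, when removing $v$ from $B$ drops the count of safe special vertices below two in some dangerous induced subgraph $H$ of $G$. Then $v$ is a special vertex of $H$ and $H$ has exactly two $B$-safe special vertices, namely $v$ and one other vertex $u$; every remaining special vertex of $H$ is outside $B$ and has degree $2$ in $G$. Since each special vertex has degree $2$ in $H$, this forces both neighbors of $v$, as well as both neighbors of every non-safe special vertex of $H$, to lie in $V(H)$, so only $u$ can have a neighbor outside $V(H)$. Two subcases arise. If $V(H) = V(G)$, then $G$ is itself a $C_5$ or a $K'_4$ equipped with a nail of size at most two, and the explicit $f^G_B$-colorings of such configurations displayed in Figure~\ref{fig-coldang} contradict that $G$ is a counterexample. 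Otherwise $\deg_G(u) = 3$ and $u$ has a unique neighbor outside $V(H)$; I would then apply minimality to the strictly smaller graph $G_1 := G - (V(H) \setminus \{u\})$ with the nail $B_1 := (B \cap V(G_1)) \cup \{u\}$ — one checks that no dangerous induced subgraph of $G_1$ contains $u$ (as $\deg_{G_1}(u) = 1$), so $B_1$-safety and $B$-safety coincide on all relevant vertices and $B_1$ is indeed a nail. The resulting $(f^{G_1}_{B_1}, N)$-coloring of $G_1$ assigns $u$ a set of size at least $6N/14$, and I would extend it across $V(H) \setminus \{u\}$ using a coloring of $H$ with its induced minimal nail, as prescribed by Figure~\ref{fig-coldang}, chosen to be compatible with the set already assigned to $u$.

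\textbf{Main obstacle.} The principal technical step is this last extension: verifying that for each of the two possibilities $H \in \{C_5, K'_4\}$, each admissible placement of the two safe specials $v$ and $u$ within $H$, and every choice of a set of size at least $6N/14$ that induction assigns to $u$, the dangerous graph $H$ with its induced minimal nail admits an extending coloring with the required demands on the remaining special vertices. This is a finite case analysis whose positive outcome is encapsulated by Figure~\ref{fig-coldang}, and it constitutes the non-routine content of the lemma.
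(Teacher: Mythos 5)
Your opening moves match the paper: pick $v\in B$, set $B'=B\setminus\{v\}$, observe that an $f_{B'}$-coloring is automatically an $f_B$-coloring, and reduce to the case where some dangerous induced subgraph $H$ has at most one $B'$-safe special vertex. Your accounting of which vertices of $H$ can have a neighbor outside $V(H)$ is also correct. But at the crucial fork you miss the observation that finishes the argument: you have shown that at most one vertex of $H$ (namely $u$) can have an edge leaving $V(H)$, and $u$ has at most one such edge since it already has degree two in $H$. Hence if $V(H)\neq V(G)$, that lone edge is a bridge, which is impossible because Lemma~\ref{lemma-conn} guarantees that every minimal counterexample is $2$-edge-connected. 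This one line collapses the dichotomy to the single case $G=H$, and the proof finishes with the finite check of Figure~\ref{fig-coldang} exactly as you do in your first subcase.

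Instead, you treat the bridge case as a live possibility and sketch a plan for it: pass to $G_1=G-(V(H)\setminus\{u\})$ with nail $B_1$, obtain a coloring by minimality, and then extend it across $H$. You yourself flag this extension as the ``main obstacle'' and do not carry it out; this is where the proof has a genuine gap. Besides being unnecessary, the plan is not clearly sound as stated: $B_1$-safety in $G_1$ need not coincide with $B$-safety in $G$ for all vertices (removing $V(H)\setminus\{u\}$ can lower degrees of vertices outside $H$), and the claim that the colorings of Figure~\ref{fig-coldang} can always be made compatible with an arbitrary $6N/14$-set at $u$ is exactly the kind of thing that would require a nontrivial verification. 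The fix is simply to invoke Lemma~\ref{lemma-conn} to rule this case out. One more small point: in the $G=H$ case, $B$ has \emph{exactly} two elements (both $B$-safe special vertices are in $B$ since every special vertex has degree two in $G=H$), not merely at most two; this is what makes Figure~\ref{fig-coldang} an exhaustive list.
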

\begin{proof}
Suppose, on the contrary, that $B$ contains a vertex $b$.  If $B'=B\setminus\{b\}$ were a nail in $G$, then by the minimality of $G$ and $B$,
there would exist an $f_{B'}$-coloring of $G$, which would also be an $f_B$-coloring of $G$.  Therefore,
we can assume that $G$ contains a dangerous induced subgraph $H$ with at most one $B'$-safe vertex.  Since $G$ is
$2$-edge-connected by Lemma~\ref{lemma-conn}, it follows that $G=H$.  Consequently, $B$ consists of exactly two
special vertices of $G$.  However, Figure~\ref{fig-coldang} shows all possibilities for $G$ and $B$ up to isomorphism
together with their $(f_B,14)$-colorings, contradicting the assumption that $G$ is a counterexample.
\end{proof}

In view of the previous lemma, we say that a subcubic triangle-free graph $G$ is a minimal counterexample to Theorem~\ref{thm-maingen}
if the empty set is a nail for $G$ and together they form a minimal counterexample to Theorem~\ref{thm-maingen}.
\begin{lemma}\label{lemma-22ip}
Let $G$ be a minimal counterexample to Theorem~\ref{thm-maingen}.  If $u$ and $v$ are adjacent vertices of $G$ of degree two,
then there exists a $5$-cycle in $G$ containing the edge $uv$.
\end{lemma}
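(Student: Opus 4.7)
My plan is to argue by contradiction, reducing to a smaller graph and extending a coloring back. Let $x$ and $y$ denote the other neighbors of $u$ and $v$, respectively. Triangle-freeness gives $x\neq y$, and the assumption that no $5$-cycle contains $uv$ means that $x$ and $y$ share no common neighbor in $V(G)\setminus\{u,v\}$. Form $G'$ from $G$ by deleting $u$ and $v$ and, if $xy\notin E(G)$, adding the edge $xy$. Then $G'$ is subcubic (the degrees of $x$ and $y$ in $G'$ match those in $G$ in each case), triangle-free (a triangle through a newly added edge would force a common neighbor of $x$ and $y$ outside $\{u,v\}$), strictly smaller than $G$, and contains the edge $xy$.

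The heart of the argument is to verify that $\varnothing$ is a nail for $G'$. Any dangerous induced subgraph of $G'$ not using the (possibly newly added) edge $xy$ is an induced subgraph of $G$ with the same vertex degrees, so it inherits two safe special vertices from the nail condition on $G$. The only new possibility is that $xy\notin E(G)$ and some induced $5$-cycle or copy of $K'_4$ in $G'$ contains the edge $xy$. In the $5$-cycle case, the three other vertices together with the path $xuvy$ in $G$ produce a $7$-cycle in $G$ through $uv$; should this dangerous subgraph have fewer than two safe special vertices, then many vertices of $V(H)\cup\{u,v\}$ would have degree two in $G$, and Lemmas~\ref{lemma-conn} and~\ref{lemma-mindeg2} (together with the no-common-neighbor condition on $x$ and $y$) force $G$ to coincide with one of a short list of explicit small graphs, each of fractional chromatic number at most $14/5$. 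The $K'_4$ case is handled analogously, noting that several choices of which edge of $K'_4$ plays the role of $xy$ are ruled out directly by the no-$5$-cycle hypothesis on $uv$, since they would place a common neighbor of $x$ and $y$ inside $H$.

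Once the nail condition is established, the minimality of $G$ yields an $(f_\varnothing^{G'},N)$-coloring $\psi'$ of $G'$ for some common denominator $N$. Since $\deg_{G'}(w)\leq\deg_G(w)$ for every $w\in V(G')$, we have $f_\varnothing^{G'}(w)\geq f_\varnothing^G(w)$, so I may restrict each $\psi'(w)$ to a subset $\psi(w)$ of size exactly $f_\varnothing^G(w)N$. Because $xy\in E(G')$, the sets $\psi(x)$ and $\psi(y)$ are disjoint, and since $|\psi(x)|,|\psi(y)|\leq 6N/14$, at least $2N/14$ colors of $\sset{N}$ lie outside $\psi(x)\cup\psi(y)$. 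Setting $\psi(u)=\psi(y)\cup F_u$ and $\psi(v)=\psi(x)\cup F_v$, where $F_u$ and $F_v$ are disjoint subsets of these free colors of sizes $6N/14-|\psi(y)|$ and $6N/14-|\psi(x)|$ respectively, produces an $(f_\varnothing^G,N)$-coloring of $G$, contradicting that $G$ is a counterexample.

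The main obstacle is the case analysis required to establish the nail condition on $G'$: ruling out problematic $C_5$ and $K'_4$ dangerous subgraphs through the added edge $xy$ requires a careful combinatorial argument combining the no-common-neighbor hypothesis, the structure of dangerous graphs, the $2$-edge-connectivity of $G$, and the minimum-degree bound. Everything else — the construction of $G'$, the downward comparison of demands, and the color-shuffling extension — is routine once the nail condition is in hand.
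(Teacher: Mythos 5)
The overall strategy — delete $u,v$, add the edge $xy$ to form $G'$, obtain a coloring of $G'$ by minimality and extend it to $G$ using the disjointness of $\psi(x)$ and $\psi(y)$, with the exceptional cases collapsing $G$ to a short list of small graphs via $2$-edge-connectivity — is exactly the paper's, and your color-shuffling extension is a correct (if differently phrased) version of the paper's permutation argument.

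However, there is a genuine gap in your treatment of the case $xy\in E(G)$. You assert that ``any dangerous induced subgraph of $G'$ not using the (possibly newly added) edge $xy$ is an induced subgraph of $G$ with the same vertex degrees,'' and that the \emph{only} new possibility is a dangerous subgraph through a \emph{newly added} edge $xy$. This is false when $xy\in E(G)$: then $G'=G-\{u,v\}$ and $\deg_{G'}(x)=\deg_G(x)-1$, $\deg_{G'}(y)=\deg_G(y)-1$. A dangerous $H\subseteq G'$ containing the (pre-existing) edge $xy$ is also a dangerous induced subgraph of $G$, and the nail condition on $G$ may be satisfied precisely because $x$ and $y$ are its two safe special vertices in $G$; in $G'$ they have degree two, so $H$ has at most one safe special vertex and $\varnothing$ is \emph{not} a nail for $G'$. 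Your fall-back (``many vertices of $V(H)\cup\{u,v\}$ would have degree two in $G$'') does not apply here, since $x$ and $y$ still have degree three in $G$. The paper circumvents this entirely: after first noting (via $2$-edge-connectivity) that if $xy\in E(G)$ then both $x$ and $y$ have degree three, it uses $B'=\{x,y\}$ rather than $\varnothing$ as the nail for $G'$. With that choice, $x$ and $y$ are automatically $B'$-safe, every dangerous subgraph through $xy$ is covered, and moreover $f^{G'}_{B'}(x)=5/14=f^G_\varnothing(x)$ so the restriction step becomes trivial — your extension then goes through verbatim. You should also be aware that your exception analysis is looser than the paper's: the decisive observation is that a dangerous $H$ with at most one safe special vertex has at most one edge to the rest of $G'$, so $2$-edge-connectivity forces $G'=H$ outright, giving a finite list immediately; and note that the conclusion you need for those graphs is an $f_\varnothing$-coloring (demand $6/14$ at degree-two vertices), which is strictly stronger than ``fractional chromatic number at most $14/5$.''
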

\begin{proof}
Suppose, on the contrary, that $uv$ is not contained in a $5$-cycle.  Let $x$
and $y$ be the neighbors of $u$ and $v$, respectively, that are not in $\{u,v\}$.
Note that $x\neq y$ since $G$ is triangle-free.  Let $G'$ be the graph obtained from $G-\{u,v\}$ by adding the edge $xy$.
Since the edge $uv$ is not contained in a $5$-cycle, it follows that $G'$ is triangle-free.

If the empty set is a nail for $G'$, then by the minimality of $G$, there
exists an $(f_\varnothing, 14t)$-coloring $\psi'$ of $G'$ for a positive integer $t$.
The sets $\psi'(x)$ and $\psi'(y)$ are disjoint; by permuting the colors if necessary, we can assume that
$\psi'(x)\subseteq \sset{6t}$ and $\psi'(y)\subseteq \iset{6t+1}{12t}$.  Then, there exists an $(f_\varnothing, 14t)$-coloring $\psi$
of $G$, defined by $\psi(z)=\psi'(z)$ for $z\not\in\{u,v\}$, $\psi(u)=\iset{6t+1}{12t}$ and $\psi(v)=\sset{6t}$.
This contradicts the assumption that $G$ is a counterexample.

We conclude that $\varnothing$ is not a nail for $G'$.
Thus if $x$ and $y$ are adjacent in $G$, then both these vertices have degree
$3$ in $G$ since $G$ is a minimal counterexample.
Therefore, the very same argument as above using $\{x,y\}$ as a
nail for $G'$ yields an $f_\varnothing$-coloring for $G$, a contradiction.

As observed earlier, $G'$ contains a dangerous induced subgraph $H$ with at most one safe special
vertex.  Lemma~\ref{lemma-conn} implies that $G$ is $2$-edge-connected, and thus $G'$ is $2$-edge-connected as well.  It follows that $G'=H$.
Consequently, since $x$ and $y$ are not adjacent, $G$ is one of the graphs depicted in
Figure~\ref{fig-colsubdang}, which are exhibited together with an $(f_\varnothing,14)$-coloring.
This is a contradiction.
\end{proof}
\begin{figure}[th]
\begin{center}
\includegraphics[width=100mm]{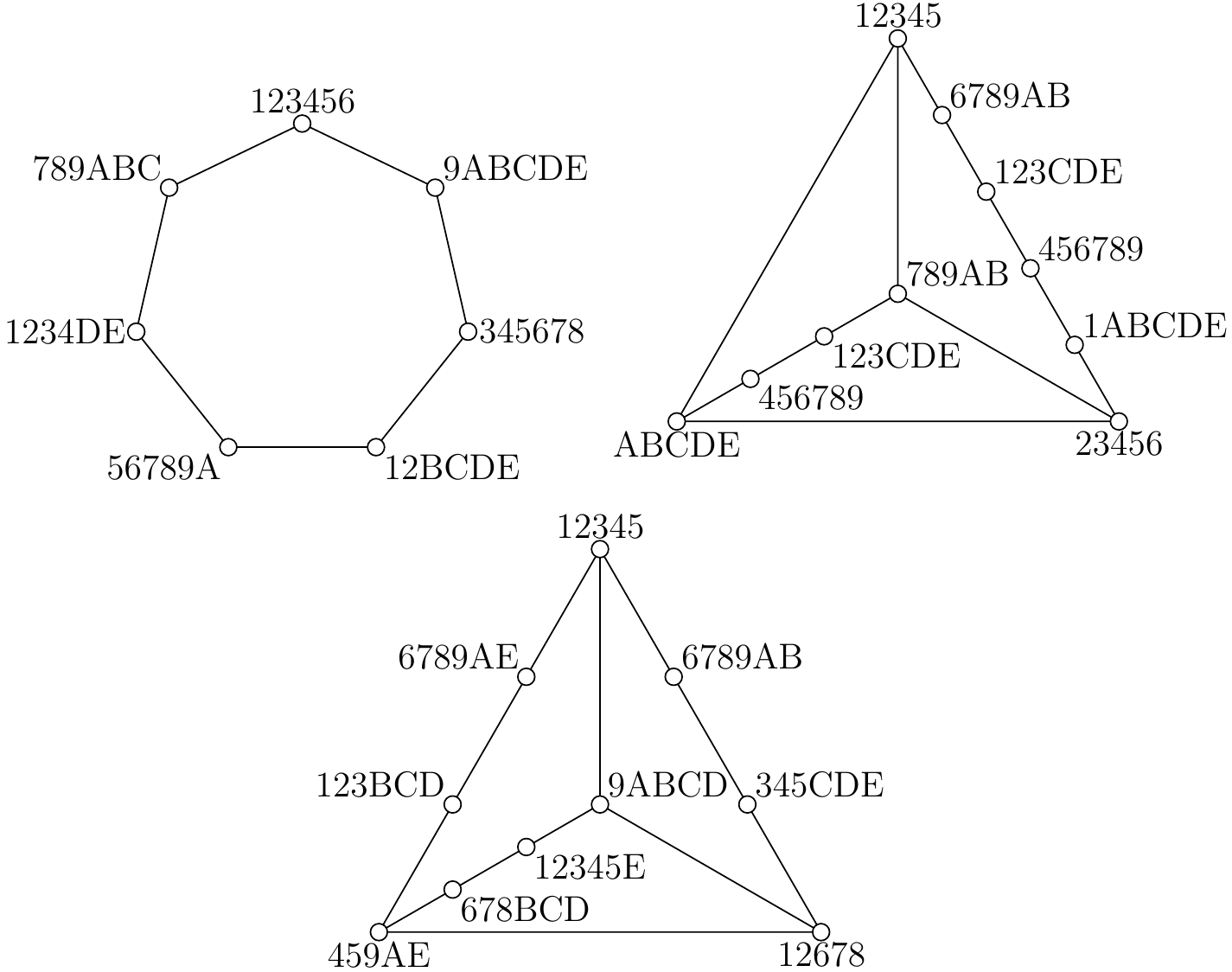}
\end{center}
\caption{Colorings of subdivided dangerous graphs.}\label{fig-colsubdang}
\end{figure}

\begin{lemma}\label{lemma-3conn}
Let $G$ be a minimal counterexample to Theorem~\ref{thm-maingen}.  If $\{uv,xy\}$ is an edge-cut
in $G$ and $G_1$ and $G_2$ are connected components of $G-\{uv,xy\}$, then
$\min\{\abs{V(G_1)},\abs{V(G_2)}\}\le 2$.
\end{lemma}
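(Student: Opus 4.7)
The plan is to derive a contradiction by assuming $|V(G_1)| \ge 3$ and $|V(G_2)| \ge 3$, then splitting $G$ along the cut into two strictly smaller subcubic triangle-free graphs, applying the minimality of $G$ to each, and gluing the resulting colorings back together. Label the endpoints so that $u, x \in V(G_1)$ and $v, y \in V(G_2)$. In each $G_i$ the two cut-endpoints have degree at most two, because they have each lost one incident edge to the other side.

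In the generic case, where $u \ne x$, $v \ne y$, and neither pair $\{u,x\}$ nor $\{v,y\}$ has a common neighbor on its own side, I set $G_1^+ := G_1 + ux$ and $G_2^+ := G_2 + vy$ (adding nothing if the edge is already present). Each $G_i^+$ is subcubic, because the added edge exactly compensates for the lost cut edge, and triangle-free by the no-common-neighbor assumption. Since $|V(G_i^+)| \le |V(G)| - 3$, the minimality of $G$ yields, for a common denominator $N$, an $(f_\varnothing,N)$-coloring $\psi_1$ of $G_1^+$ and an $(f_\varnothing,N)$-coloring $\psi_2$ of $G_2^+$; the inserted edges guarantee $\psi_1(u) \cap \psi_1(x) = \varnothing$ and $\psi_2(v) \cap \psi_2(y) = \varnothing$.

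To glue, I permute the colors of $\psi_2$ so that $\psi_2(v)$ is sent into $\sset{N} \setminus \psi_1(u)$ while $\psi_2(y)$ is sent into $\sset{N} \setminus \psi_1(x)$. Each of $|\psi_1(u)|,|\psi_1(x)|,|\psi_2(v)|,|\psi_2(y)|$ is either $5N/14$ or $6N/14$, depending on whether the corresponding endpoint has degree $3$ or $2$ in $G$, and the pairs $(\psi_1(u),\psi_1(x))$ and $(\psi_2(v),\psi_2(y))$ are already disjoint. Viewing the renaming as a bipartite matching between source slots and target slots of $\sset{N}$, with $B_1$-sources forbidden from $A_1$-targets and $B_2$-sources forbidden from $A_2$-targets, a short Hall-type check (each forbidden region occupies at most $6N/14 < N/2$ of the palette, and $A_1 \cap A_2 = \varnothing$) produces the desired permutation. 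The glued assignment is then an $(f_\varnothing,N)$-coloring of $G$, contradicting the assumption that $G$ is a counterexample.

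The remaining work lies in the degenerate sub-cases. When $u=x$ (or symmetrically $v=y$), the shared endpoint carries both cut edges, has degree $3$ in $G$, and has one internal edge in $G_1$; I apply the inductive hypothesis to $G_1$ itself with the singleton nail $B_1 := \{u\}$ in lieu of adding an edge, exploiting the resulting bound $f_{B_1}(u) \le 1/2$ to splice across the cut. When $u$ and $x$ have a common neighbor $w$ in $G_1$, the cycle $uwxyvu$ is a $5$-cycle of $G$ traversing both cut edges, hence is a dangerous induced subgraph, and I combine Lemma~\ref{lemma-22ip} with ad hoc coloring of the tightly constrained configuration to finish. When $\varnothing$ fails to be a nail for some $G_i^+$, the $2$-edge-connectivity inherited from Lemma~\ref{lemma-conn} forces $G_i^+$ itself to be a dangerous graph, cutting down to a short list of explicit configurations that are colored by hand as in Figures~\ref{fig-coldang} and~\ref{fig-colsubdang}. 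I expect this case analysis, together with careful bookkeeping of the permutation feasibility for every combination of cut-endpoint degrees, to be the technically heaviest portion of the proof.
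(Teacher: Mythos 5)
Your outline follows the paper's high-level plan (split along the $2$-cut, apply minimality to each side, glue), but your decomposition is genuinely different and, as stated, has a gap.

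The paper does not add the edge $ux$ to $G_1$; it adds a \emph{path} $uabx$ of length three with two new internal vertices. That choice means $\psi_1(u)$ and $\psi_1(x)$ need not be disjoint (only $\abs{\psi_1(u)\cap\psi_1(x)}$ is bounded via Proposition~\ref{prop-basic}), it never creates a triangle regardless of whether $u$ and $x$ have a common neighbor, and it lets the authors install the cross-side nail trick ($B_1=\set{a,b}$ iff $\deg_G(y)=3$, etc.) that is needed to prove inequality~\eqref{ineq-twosets} and make the gluing feasible. Your edge-addition is more restrictive — it forces $\psi_1(u)\cap\psi_1(x)=\varnothing$ — which actually trivializes the gluing feasibility (indeed one checks $\abs{B_1}\le N-\abs{A_1}$, $\abs{B_2}\le N-\abs{A_2}$ and $\abs{B_1}+\abs{B_2}\le 12N/14<N$, so a suitable permutation always exists), but it pushes all the difficulty into the degenerate sub-cases, and it is there that your proposal breaks down.

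The concrete gap is in your common-neighbor sub-case. You assert that if $u$ and $x$ have a common neighbor $w$ in $G_1$, then ``$uwxyvu$ is a $5$-cycle of $G$.'' This requires the edge $vy$, which is not available: $v$ and $y$ are simply the two cut-endpoints in $G_2$ and need not be adjacent. Lemma~\ref{lemma-22ip} would supply that adjacency only when two consecutive vertices of the path (say $u$ and $w$) both have degree two in $G$; if, say, $u$ has degree three, there is no $5$-cycle forced, yet you cannot add $ux$ either (it would create the triangle $uwx$), so the sub-case is not handled. A correct treatment here needs something like adding a length-two or length-three path instead of the edge $ux$ and then redoing the overlap bookkeeping — which is precisely what the paper's length-three path handles uniformly. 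Two further remarks: the case $u=x$ that you treat never occurs, since the unique edge from $u$ into $G_1$ would then be a bridge of $G$, contradicting Lemma~\ref{lemma-conn}; and the case where $G_i^+$ fails to have $\varnothing$ as a nail (hence $G_i^+$ is a $5$-cycle or $K'_4$) is dismissed as ``a short list of explicit configurations,'' but e.g.\ identifying which edges of $K'_4$ can play the role of $ux$ and reconciling this with Lemma~\ref{lemma-22ip} is real work that the paper organizes differently (first eliminating the path-on-three-vertices case, then reducing the dangerous cases to the graph in Figure~\ref{fig-joink4}).
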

\begin{proof}
Suppose, on the contrary, that $\min\{\abs{V(G_1)},\abs{V(G_2)}\}\ge 3$.  Choose the labels so that
$\{u,x\}\subset V(G_1)$.

Suppose first that
$G_1$ is a path $uzx$ on three vertices.  By Lemma~\ref{lemma-22ip}, the vertices $y$ and $v$ are adjacent.
Since $\abs{V(G_2)}\ge3$ and $G$ is $2$-edge-connected by Lemma~\ref{lemma-conn},
it follows that $y$ and $v$ have degree three in $G$.
Note that $B'=\{y,v\}$ is a nail for $G_2$.  By the minimality
of $G$, there exists an $(f_{B'},14t)$-coloring $\psi$ of $G_2$ for a positive integer $t$.  Since $y$ and $v$ are adjacent,
by permuting the colors, we can assume that $\psi(y)=\sset{5t}$ and $\psi(v)=\iset{5t+1}{10t}$.  Let us extend $\psi$
by defining $\psi(u)=\sset{2t}\cup\iset{10t+1}{14t}$, $\psi(z)=\iset{2t+1}{8t}$ and
$\psi(x)=\iset{8t+1}{14t}$.
Then $\psi$ is an $(f_\varnothing,14t)$-coloring of $G$, contrary to the assumption that $G$ is a counterexample.

By symmetry, we conclude that neither $G_1$ nor $G_2$ is a path on three vertices; and more generally, neither $G_1$ nor
$G_2$ is a path, as otherwise $G$ would contain a $2$-edge-cut cutting off a path on three vertices.
Therefore, we can choose the edge-cut $\{uv,xy\}$ in such a way that both $x$ and $v$ have degree three.
Let $G'_1$ be the graph obtained from $G_1$ by adding a path $uabx$, and let $G'_2$ be the
graph obtained from $G_2$ by adding a path $vcdy$, where $a$, $b$, $c$ and $d$ are new vertices of degree two.
Since $G$ is $2$-edge-connected, we have $u\neq x$ and $v\neq y$; hence, both $G'_1$ and $G'_2$ are triangle-free.
If $y$ has degree three, then let $B_1=\{a,b\}$, otherwise let $B_1=\varnothing$.  Similarly, if $u$ has degree three,
then let $B_2=\{c,d\}$, otherwise let $B_2=\varnothing$.

Suppose first that $B_1$ is a nail for $G'_1$ and $B_2$ is a nail for $G'_2$.  By the minimality of $G$, there exist
an $(f_{B_1},14t)$-coloring $\psi_1$ of $G'_1$ and an $(f_{B_2},14t)$-coloring
$\psi_2$ of $G'_2$, for a positive integer $t$.
Let $n_u=\abs{\psi_1(u)\setminus \psi_1(x)}$, $n_x=\abs{\psi_1(x)\setminus \psi_1(u)}$, $n_{ux}=\abs{\psi_1(u)\cap \psi_1(x)}$, and let $n_v$, $n_y$ and $n_{vy}$ be defined symmetrically.
Proposition~\ref{prop-basic} implies that $n_{ux}\le 4t$ and $n_{vy}\le 4t$.  Since $x$ and $v$ have degree three and
$u$ and $y$ have degree at least two, it follows that $n_x+n_{ux}=5t$, $n_u+n_{ux}\le6t$, $n_v+n_{vy}=5t$ and $n_y+n_{vy}\le 6t$.
Furthermore, by the choice of $B_1$ and $B_2$, either $n_u+n_{ux}=5t$ or $n_{vy}\le 2t$, and either $n_y+n_{vy}=5t$ or $n_{ux}\le 2t$.
Therefore,
\begin{equation}\label{ineq-twosets}
n_{ux}+n_{vy}+\max(n_u,n_y)+\max(n_v,n_x)\le 14t.
\end{equation}
Consequently, we can permute the colors for $\psi_2$
so that the sets $\psi_1(u)\cap \psi_1(x)$, $\psi_2(v)\cap \psi_2(y)$, $(\psi_1(u)\setminus \psi_1(x))\cup (\psi_2(y)\setminus \psi_2(v))$
and $(\psi_1(x)\setminus \psi_1(u))\cup (\psi_2(v)\setminus \psi_2(y))$ are pairwise disjoint.
Indeed, by~\eqref{ineq-twosets} the interval $[14t]$ can be partitioned
into four intervals $I_1,I_2,I_3,I_4$ with $\abs{I_1}=n_{ux}$,
$\abs{I_2}=n_{vy}$, $\abs{I_3}=\max\{n_u,n_y\}$ and
$\abs{I_4}\ge\max\{n_v,n_x\}$. Now, we
permute the colors for $\psi_1$ so that $\psi_1(u)\cap \psi_1(x)=I_1$,
$\psi_1(u)\setminus \psi_1(x) \subseteq I_3$
and $\psi_1(x)\setminus \psi_1(u) \subseteq I_4$.
In addition, we permute the colors for $\psi_2$ so that $\psi_2(v)\cap
\psi_2(y)=I_2$, $\psi_2(v)\setminus \psi_2(y) \subseteq I_4$
and $\psi_2(y)\setminus \psi_2(v) \subseteq I_3$.
Then, $\psi_1(u)\cap \psi_2(v)=\varnothing=\psi_1(x)\cap\psi_2(y)$, thus giving an $(f_\varnothing, 14t)$-coloring of $G$, which is a contradiction.

Hence, we can assume that $B_1$ is not a nail for $G'_1$.  Since $G$ is $2$-edge-connected, $G'_1$ is $2$-edge-connected as well, and thus
it is a dangerous graph.  Since $G_1$ is not a path on three vertices, it
follows that $G'_1$ is $K'_4$.  Furthermore, $B_1=\varnothing$ and
thus $y$ has degree two.  Note that $G_1$ has an
$(f_{\set{u,x}},14t)$-coloring such that $n_{ux}=4t$ and $n_u=n_x=t$ (obtained from the
coloring of the bottom left graph in Figure~\ref{fig-coldang} by removing the
black vertices and replacing each color $c$ by $t$ new colors $c_1,\ldots,c_t$).
Let $G''_2$ be the graph obtained from $G_2$ by adding a new vertex of degree two adjacent to $y$ and $v$.
Let us point out that $y$ is not adjacent to $v$, since $G$ is
$2$-edge-connected (recall that $y$ has degree two since $B_1=\varnothing$).
Hence, $G''_2$ is triangle-free.
If $\varnothing$ is a nail for $G''_2$, then let us redefine $\psi_2$ as an
$(f_\varnothing, 14t)$-coloring of $G''_2$, which
exists by the minimality of $G$, and let $n_v$, $n_y$ and $n_{vy}$ be defined as before.
Proposition~\ref{prop-basic} yields that
$n_v+n_y+n_{vy}\le 8t$; hence,~\eqref{ineq-twosets} holds, and we obtain a contradiction as in the previous paragraph.

\begin{figure}[!t]
\begin{center}
\includegraphics[width=100mm]{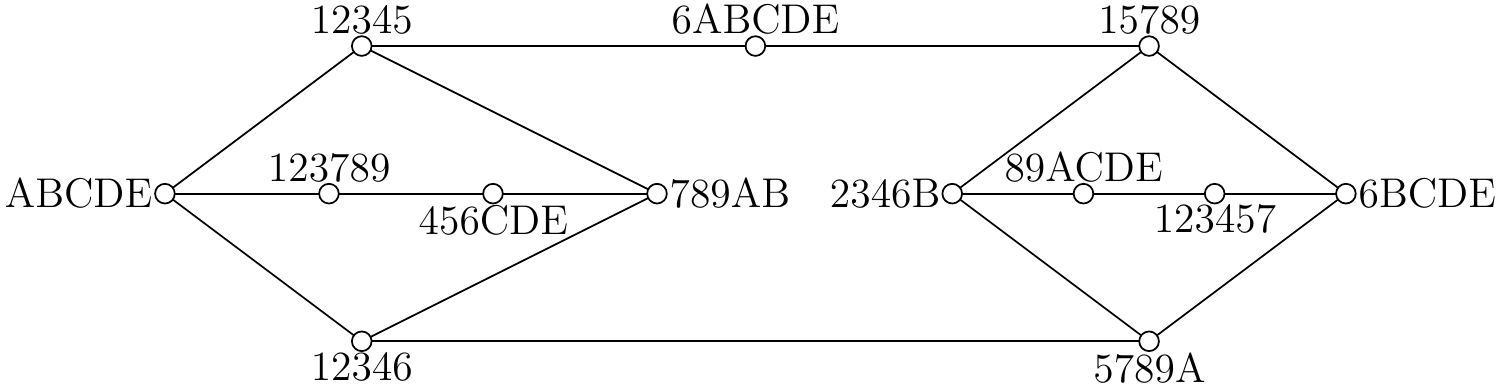}
\end{center}
\caption{A special $2$-cut.}\label{fig-joink4}
\end{figure}

Consequently, $\varnothing$ is not a nail for $G''_2$, and since $G''_2$ is $2$-edge-connected and $G_2$ is not a path,
it follows that $G''_2$ is $K'_4$.  However, $G$ must then be the graph depicted in Figure~\ref{fig-joink4} together with its
$(f_\varnothing, 14)$-coloring, contrary to the assumption that $G$ is a counterexample.
\end{proof}

\begin{corollary}\label{cor-3safe}
Every dangerous induced subgraph in a minimal counterexample to Theorem~\ref{thm-maingen} contains at least three safe special vertices.
\end{corollary}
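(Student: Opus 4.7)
The plan is proof by contradiction. Assume $G$ is a minimal counterexample and some dangerous induced subgraph $H\subseteq G$ has at most two safe special vertices. Since $B=\varnothing$ by Lemma~\ref{lemma-nob}, the nail property forces $H$ to have \emph{exactly} two safe specials, which I will call $u$ and $x$, each of degree three in $G$. All other special vertices of $H$ have degree two in $G$, while every non-special vertex of $H$ already has degree three in $H$; hence the only edges from $V(H)$ to $V(G)\setminus V(H)$ are $uv$ and $xy$, where $v,y\notin V(H)$ are the remaining neighbors of $u$ and $x$. In particular $\{uv,xy\}$ is a 2-edge-cut of $G$.

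Next I apply Lemma~\ref{lemma-3conn}. Since $|V(H)|\ge 5$, the component $G_2$ of $G-\{uv,xy\}$ that avoids $V(H)$ has at most two vertices; being connected and having minimum degree at least two (by Lemma~\ref{lemma-mindeg2}), it is either a single vertex $z$ adjacent to both $u$ and $x$, or an edge $z_1z_2$ with $z_1$ adjacent to $u$ and $z_2$ adjacent to $x$. Thus $G$ has at most $|V(H)|+2\le 10$ vertices and is determined, up to isomorphism, by $H\in\{C_5,K'_4\}$ and the position of $\{u,x\}$ among the specials of $H$. Triangle-freeness immediately kills the $G_2=\{z\}$ option whenever $u$ and $x$ are adjacent in $H$ (since $uzx$ would be a triangle), and Lemma~\ref{lemma-22ip} kills those $G_2=\{z_1,z_2\}$ options in which the adjacent degree-two pair $z_1z_2$ lies on no 5-cycle of $G$; a short check shows that this happens whenever $u,x$ are adjacent in $H$, and also in both $H=K'_4$ sub-cases.

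What remains is a short finite list of small subcubic triangle-free graphs: $C_5$ plus a vertex (six vertices), $C_5$ plus an edge (seven vertices), and $K'_4$ plus a vertex (nine vertices). For each, the plan is to exhibit an explicit $(f_\varnothing,14)$-coloring, in the spirit of Figures~\ref{fig-coldang} and~\ref{fig-colsubdang}, contradicting the assumption that $G$ is a counterexample. The main obstacle is the nine-vertex graph arising from $H=K'_4$ with $\{u,x\}$ on different subdivided edges and $G_2=\{z\}$: unlike in the $C_5$ cases, $u$ and $x$ are not linked by any 2-edge path inside $H$, so Proposition~\ref{prop-basic} does not force the color sets at $u$ and $x$ to share enough colors to accommodate the six-color demand of $z$, and the coloring has to be produced essentially by hand rather than recovered from an inductive application to $H$.
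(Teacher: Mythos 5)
Your proposal is correct and takes essentially the same route as the paper: both reduce to the $2$-edge-cut structure via Lemma~\ref{lemma-3conn} (so $G$ is $H$ plus a path of length two or three between the two safe specials), both use triangle-freeness together with Lemma~\ref{lemma-22ip} to rule out the adjacent-specials and the length-three $K'_4$ options, and both finish by exhibiting explicit $(f_\varnothing,14)$-colorings of the remaining small graphs, with the nine-vertex $K'_4$-plus-a-vertex graph (the paper's Figure~\ref{fig-3safe}) being exactly the one ad hoc case you single out.
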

\begin{proof}
Let $H$ be a dangerous induced subgraph in a minimal counterexample $G$.
Since $\varnothing$ is a nail for $G$, it follows that $H$ contains
at least two safe special vertices $u$ and $v$.  If $H$ contains exactly two safe special vertices, then the edges
of $E(G)\setminus E(H)$ incident with $u$ and $v$ form a $2$-edge-cut.
By Lemma~\ref{lemma-3conn}, we know that $G$ consists
of $H$ and a path $Q$ of length two or three joining $u$ and $v$.  Note that $u$ and $v$ are not adjacent, as otherwise
$Q$ would either be part of a triangle or contradict Lemma~\ref{lemma-22ip}.  If $H$ is a $5$-cycle, then
$G$ has an $(f_\varnothing,14)$-coloring obtained from the coloring of the top right graph in Figure~\ref{fig-coldang} by copying
the colors of the vertices of one of the paths between the black vertices to
the vertices of $Q$.  Hence, we assume that $H$ is $K'_4$.  By Lemma~\ref{lemma-22ip},
we conclude that $Q$ has length two.  Consequently, $G$ is the graph depicted in Figure~\ref{fig-3safe}.  However,
this graph has an $(f_\varnothing,14)$-coloring, which is a contradiction.
\end{proof}

\begin{figure}[!t]
\begin{center}
\includegraphics[width=60mm]{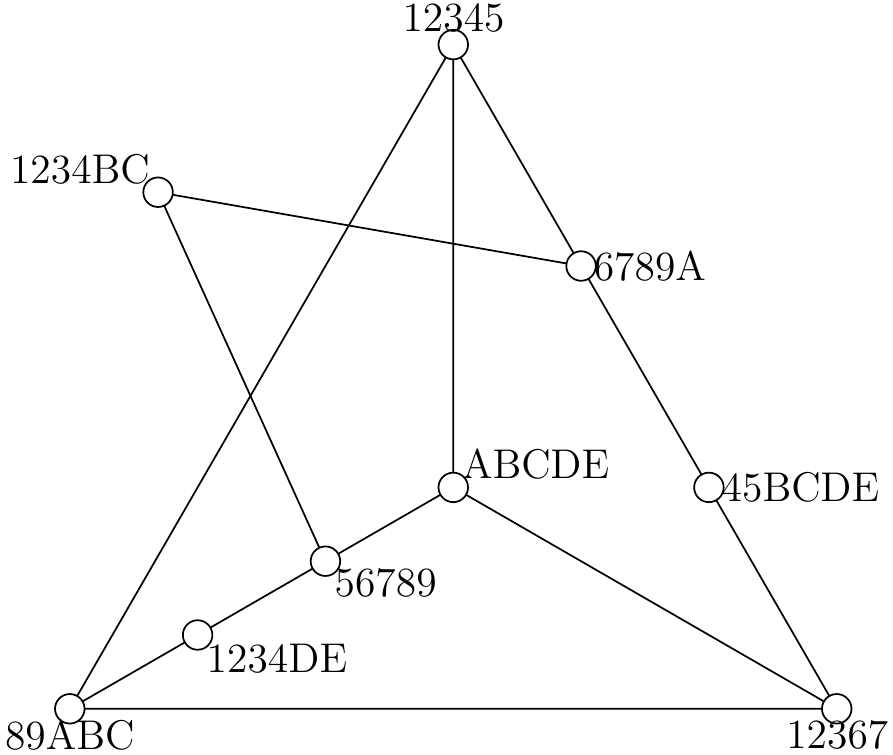}
\end{center}
\caption{A dangerous induced subgraph with two safe vertices.}\label{fig-3safe}
\end{figure}

\begin{lemma}\label{lemma-no22}
If $G$ is a minimal counterexample to Theorem~\ref{thm-maingen}, then no two vertices of $G$ of degree two are adjacent.
\end{lemma}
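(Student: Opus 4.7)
Suppose for contradiction that $u$ and $v$ are adjacent vertices of $G$ both of degree~$2$. By Lemma~\ref{lemma-22ip} they lie on a $5$-cycle $C=uvy_1y_2y_3u$, and Corollary~\ref{cor-3safe} applied to $C$ forces $\deg_G(y_i)=3$ for $i\in\{1,2,3\}$, since $u$ and $v$ are unsafe special vertices of $C$. Let $w$, $x$, $z$ be the respective third neighbors of $y_1$, $y_2$, $y_3$ off $C$; triangle-freeness of $G$ gives $w\neq x$ and $x\neq z$. The plan is to construct a smaller subcubic triangle-free graph $G'$ with an appropriate nail $B'$, apply the minimality of~$G$ to obtain an $(f_{B'},N)$-coloring $\psi$ of~$G'$, and extend $\psi$ to an $(f_\varnothing,N)$-coloring of~$G$, contradicting the hypothesis that $G$ is a counterexample.

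A natural first attempt is $G_0:=G-\{u,v\}$ with $B_0:=\{y_1,y_3\}$; here $y_1$ and $y_3$ have degree~$2$ in $G_0$ and the same demand $5/14$ as in $G$. Writing $\sset{N}$ as the disjoint union of $\psi(y_1)\cap\psi(y_3)$, $\psi(y_1)\setminus\psi(y_3)$, $\psi(y_3)\setminus\psi(y_1)$ and the complement of $\psi(y_1)\cup\psi(y_3)$, a direct counting check shows that $\psi$ extends to~$G$—which requires assigning $u$ and $v$ disjoint sets of size $6N/14$ avoiding $\psi(y_3)$ and $\psi(y_1)$ respectively—if and only if $|\psi(y_1)\cap\psi(y_3)|\le 2N/14$. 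Unfortunately, Proposition~\ref{prop-basic}(1) applied to the path $y_1y_2y_3$ only yields the lower bound $|\psi(y_1)\cap\psi(y_3)|\ge N/14$, which is the wrong direction.

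To force the desired upper bound, the reduction is refined to $G':=(G-\{u,v,y_2\})+P$, where $P$ is a new path $y_1aby_3$ on two fresh degree-$2$ vertices $a$ and $b$, and $B':=\{y_1,y_3\}$, so that $a,b\notin B'$ carry the strong demand $6/14$. The graph $G'$ is subcubic and triangle-free with $|V(G')|=|V(G)|-1$, and Proposition~\ref{prop-basic}(2) applied to $P$ now delivers, for every $(f_{B'},N)$-coloring $\psi$ of $G'$, the desired bound
\[|\psi(y_1)\cap\psi(y_3)|\le\bigl(1-\tfrac{6}{14}-\tfrac{6}{14}\bigr)N=\tfrac{2N}{14}.\]

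The remaining and most delicate step is to extend $\psi$ back to $y_2$ in $G$, which requires the further bound $|\psi(y_1)\cup\psi(y_3)\cup\psi(x)|\le 9N/14$. This is not automatic, since $x$ is independent of $y_1$ and $y_3$ in $G'$, so this is the main obstacle in the proof. Its resolution proceeds by case analysis on the degree of $x$ and on the mutual adjacencies among $w$, $x$, $z$ in $G$; in each case, the construction of $G'$ is further adjusted so as to propagate the Proposition~\ref{prop-basic}(2) constraint to $\psi(x)$, or else the local structure pins $G$ down to one of a small number of explicit graphs that admits an $(f_\varnothing,14)$-coloring by direct inspection. In every subcase one must also verify that the refined $B'$ is indeed a nail for the corresponding $G'$: every dangerous induced subgraph of $G'$, in particular those newly created by the insertion of~$P$ or by the degree drop caused by removing~$y_2$, must contain at least two $B'$-safe special vertices.
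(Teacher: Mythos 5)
You correctly set up the problem and accurately diagnose the central obstruction in your own construction: after deleting the middle vertex $y_2$ and inserting the path $y_1aby_3$, Proposition~\ref{prop-basic}(2) does give $|\psi(y_1)\cap\psi(y_3)|\le 2N/14$, which suffices to reinsert $u$ and $v$, but nothing in $G'$ ties $\psi(x)$ to $\psi(y_1)\cup\psi(y_3)$, so the bound $|\psi(y_1)\cup\psi(y_3)\cup\psi(x)|\le 9N/14$ needed to recolor $y_2$ can genuinely fail --- for instance, $\psi(x)$ may land entirely in the complement of $\psi(y_1)\cup\psi(y_3)$. Your proposed remedy, that ``the construction of $G'$ is further adjusted so as to propagate the Proposition~\ref{prop-basic}(2) constraint to $\psi(x)$'', is a placeholder rather than an argument: there is no evident way to attach $x$ to the new path without exceeding degree three, creating a triangle, or reintroducing the very freedom you were trying to suppress, and you do not propose one. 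This hardest case (all of $w,x,z$ of degree three and pairwise distinct) is exactly where the paper departs from your template. It deletes $\{u,v,y_1\}$ rather than $\{u,v,y_2\}$, keeping the middle vertex and its neighbor $x$ inside $G'$ so that $\psi(x)$ is already constrained by Proposition~\ref{prop-basic} along paths through $y_2$, and then assembles the coloring of $G$ by an explicit, delicate construction of several auxiliary colour sets $M,Z,Y,Y',T$. Nothing in your sketch points toward this, nor toward any alternative that closes the gap.

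Two further issues. First, you assert but never verify that $B'$ is a nail for each variant of $G'$; the inserted path can create new dangerous subgraphs (e.g.\ if $w=z$, then $y_1aby_3w$ is a $5$-cycle of $G'$ whose special vertices $a,b$ are unsafe, so you need to know the degree of $w$). Second, the subcases you wave away (``the local structure pins $G$ down to one of a small number of explicit graphs'') are, in the paper, not free: when $z$ or $x$ has degree two one still needs a nontrivial reduction --- in one case deleting six vertices and splicing a path between the outer neighbors $w$ and $z$, not between $y_1$ and $y_3$ --- followed by an extension argument via Proposition~\ref{prop-basic}; only a handful of degenerate configurations reduce to a picture check. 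As written, every load-bearing step beyond the setup is absent.
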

\begin{proof}
Suppose, on the contrary, that $u$ and $v$ are adjacent vertices of degree two in $G$.
It follows from Lemma~\ref{lemma-22ip} that $G$ contains a $5$-cycle
$xuvyz$.  Further, $x$, $y$ and $z$ have degree three by Corollary~\ref{cor-3safe}.
Let $a$, $b$ and $c$ be the neighbors of $x$, $y$ and $z$, respectively, outside of the $5$-cycle
(where possibly $a=b$).

Let us now consider the case where $a$ has degree two. Note that, in this
case, $a\neq b$ as $G$ is $2$-edge-connected. Let $d$ be the neighbor of $a$ distinct from $x$.
If $d$ has degree two, then by Lemma~\ref{lemma-22ip}, the path $xad$ is a part of a $5$-cycle.  Since $G$ is $2$-edge-connected,
it follows that $d$ is adjacent to $c$.  Then $G$ contains a $2$-edge-cut
formed by the edges incident with $y$ and $c$.
By Lemma~\ref{lemma-3conn}, $G$ is one of the graphs in the top of Figure~\ref{fig-only2}.  This is a contradiction,
as the figure also shows that these graphs are $(f_\varnothing, 14)$-colorable.  Hence, $d$ has degree three.
Let $G'=G-\{u,v\}$ and $B'=\{x,y\}$.  Then $B'$ is a nail for $G'$.  By the minimality of $G$, there exists an $(f_{B'},14t)$-coloring $\psi'$ of $G'$ for
a positive integer $t$.  Let $L=\sset{14t}\setminus \psi'(z)$. Note that $\abs{L}=9t$ and $\psi'(y)\subseteq L$.
Since the path $daxz$ is colored and $f_{B'}(a)=6/14$ and $f_{B'}(x)=5/14$, Proposition~\ref{prop-basic}
implies that $\abs{\psi'(d)\cap\psi'(z)}\le 3t$, and thus $\abs{\psi'(d)\cap L}\ge 2t$.  We construct an $(f_\varnothing, 14t)$-coloring $\psi$ of $G$ as follows.
We let $\psi$ be equal to $\psi'$ on all vertices but $a$, $x$, $u$ and $v$.
Let $M$ be a subset of $\psi(d)\cap L$ of size exactly $2t$.
Let $M'$ be a subset of $\psi'(y)$ of size exactly $2t$ containing $M\cap\psi'(y)$.
We choose $\psi(x)$ of size $5t$ so that $M\cup M'\subset \psi(x)\subseteq M\cup M'\cup (L\setminus \psi'(y))$.
Observe that $\abs{\psi(x)\cap\psi(d)}\ge \abs{M}=2t$ and $\abs{\psi(x)\cap \psi(y)}=\abs{M'}=2t$;
hence, Proposition~\ref{prop-basic} implies that we can choose $\psi(a)$, $\psi(u)$ and $\psi(v)$ so that
$\psi$ is an $(f_\varnothing, 14t)$-coloring of $G$.  This is a contradiction.

By symmetry, it follows that both $a$ and $b$ have degree three.  If $a=b$,
then the edges incident with $a$ and $z$ form a $2$-edge-cut in $G$,
so Lemma~\ref{lemma-3conn} yields that $G$ consists of the $5$-cycle $xuvyz$, the vertex $a$ adjacent to $x$ and $y$, and a path $Q$ of length two or three
joining $a$ with $z$.  If $Q$ had length three, then $G$ would be $K'_4$, contrary to the assumption that $\varnothing$ is a nail for $G$.
So $Q$ has length two and hence $G$ is the bottom graph in Figure~\ref{fig-only2}, which has an $(f_\varnothing, 14)$-coloring.
This is a contradiction; hence, $a\neq b$.

\begin{figure}[!t]
\begin{center}
\includegraphics[width=100mm]{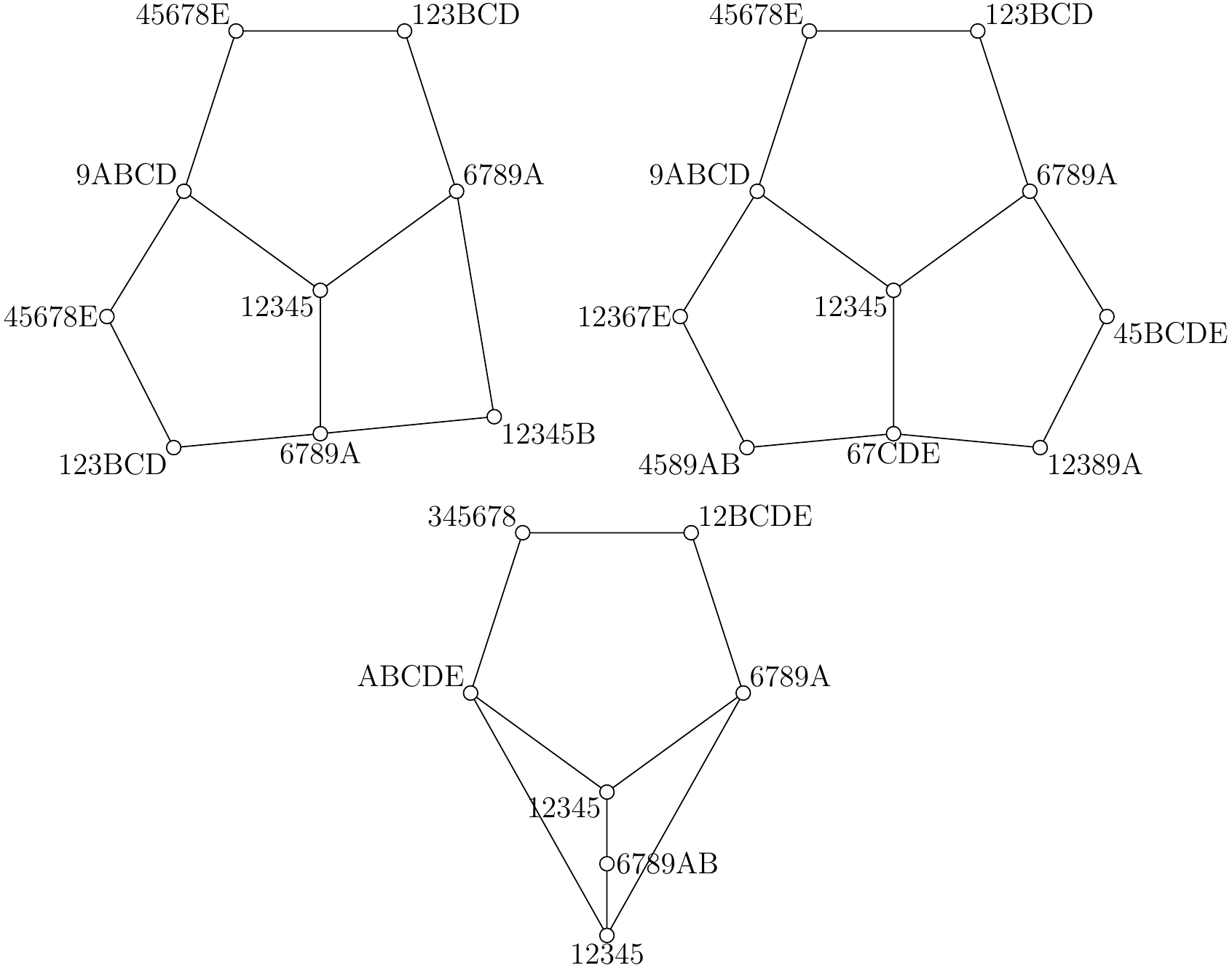}
\end{center}
\caption{Vertices of degree $2$ in a $5$-cycle.}\label{fig-only2}
\end{figure}

Suppose now that $c$ has degree two, and let $s$ be the neighbor of $c$
distinct from $z$.  If $s$ has degree two, then using Lemma~\ref{lemma-22ip} and symmetry,
we can assume that $s$ is adjacent to $a$.  Then the edges incident with $a$
and $y$ form a $2$-edge-cut. However, this
contradicts Lemma~\ref{lemma-3conn} since $b$ has degree three. Hence, $s$ has degree three.  Let $G'$ be the graph obtained from $G-\{u,v,x,y,z,c\}$ by adding a path
$aopb$ with two new vertices of degree two. Note that $B'=\{o,p,s\}$ is a nail for $G'$.
By the minimality of $G$, there exists an $(f_{B'},14t)$-coloring $\psi$ of
$G'$ for a positive integer $t$.  Let $L_x=\sset{14t}\setminus \psi(a)$
and $L_y=\sset{14t}\setminus \psi(b)$.  Thus, $\abs{L_x}=\abs{L_y}=9t$, and Proposition~\ref{prop-basic} applied to the path $aopb$ implies that
$\abs{L_x\cup L_y}\ge 10t$.  Since $\abs{L_x\cup L_y}\le 14t$, we also know
that $\abs{L_x\cap L_y}\ge 4t$.  Choose $M$ as an arbitrary subset of $\psi(s)$ of size exactly $2t$.
Observe that we can choose $\psi(x)$ in $L_x\setminus M$ and $\psi(y)$ in
$L_y\setminus M$, each of size $5t$, so that
$\abs{\psi(x)\cap\psi(y)}=2t$: first choose a set $M'$  of $2t$ colors in
$(L_x\cap L_y)\setminus M$; next, choose disjoint sets of size $3t$ from $L_x\setminus (M\cup M')$ and $L_y\setminus (M\cup M')$,
which is possible as each of these sets has size at least $3t$ (in fact, at
least $5t$) and their union has size at least $6t$.
Notice that $\abs{(\psi(x)\cup \psi(y))\cap \psi(s)}\le \abs{\psi(s)\setminus M}\le 3t$.  By Proposition~\ref{prop-basic},
$\psi$ extends to an $(f_\varnothing, 14t)$-coloring of $G$ (to color $z$ and $c$, apply the Proposition to a path of length three
with ends colored by $\psi(s)$ and $\psi(x)\cup \psi(y)$), which is a contradiction.

Therefore, $c$ has degree three.  Let $G'=G-\{u,v,y\}$.  Suppose first that $\varnothing$ is a nail for $G'$.  By the minimality
of $G$, there exists an $(f^{G'}_\varnothing,14t)$-coloring $\psi'$ of $G'$
for a positive integer $t$.  Let $L_x=\sset{14t}\setminus \psi'(a)$,
$L_y=\sset{14t}\setminus \psi'(b)$ and $L_z=\sset{14t}\setminus \psi'(c)$.  Note that $\abs{L_x}=\abs{L_z}=9t$ and $\abs{L_y}=8t$.  Also, Proposition~\ref{prop-basic}
implies that $\abs{L_x\cup L_z}\ge 12t$.  Arbitrarily choose a set $M$ in $L_z\setminus L_x$ of size exactly $3t$.
Note that $\abs{L_z\setminus M}=6t$ and $\abs{L_y\setminus M}\ge 5t$; hence,
there exists a set $Z$ in $L_z\setminus M$ of size exactly
$2t$ such that $\abs{L_y\setminus (M\cup Z)}\ge 4t$.  Let $Y$ be a subset of $L_y\setminus (M\cup Z)$ of size exactly $4t$.
If $\abs{Z\setminus L_x}\ge t$, then let $Y'=\varnothing$; otherwise notice that
$\abs{L_x\cup Z\cup M}<13t$ and choose $Y'$ in $\sset{14t}\setminus (L_x\cup Z\cup M)$
of size exactly $t$. Last, choose a set $T$ of size $3t$ so that $Y'\subset T\subset Y\cup Y'$.

Let $\psi$ be an $(f_\varnothing, 14t)$-coloring of $G$ defined as follows.  We set $\psi(p)=\psi'(p)$ for $p\in V(G)\setminus \{x,y,z,u,v,b\}$,
$\psi(z)=M\cup Z$, $\psi(u)=M\cup T$ and we let $\psi(y)$ be any set of $5t$ colors such that $Y\cup Y'\subset \psi(y)\subset \sset{14t}\setminus (M\cup Z)$.
Thus $\abs{\psi(u)\cap \psi(y)}\ge \abs{T}=3t$, hence we can choose $\psi(v)$
in $\sset{14t}\setminus (\psi(u)\cup \psi(y))$ of size $6t$.
The choice of $Y'$ and $T$ implies that either $\abs{Z\setminus L_x}\ge t$ or
$\abs{T\setminus L_x}\ge t$; hence
$\abs{L_x\setminus (\psi(u)\cup \psi(z))}=\abs{L_x\setminus (T\cup Z)}\ge
\abs{L_x}-\abs{T}-\abs{Z}+t=5t$.  Choose a set $\psi(x)$ in $L_x\setminus (\psi(u)\cup \psi(z))$
of size exactly $5t$. Also note that $\abs{\psi(y)\setminus L_y}\le \abs{\psi(y)\setminus Y}=t$, and select
$\psi(b)\subseteq \psi'(b)\setminus (\psi(y)\setminus L_y)$ of size $5t$ (let
us point out that $f^{G'}_\varnothing(b)=6/14$ while
$f^G_\varnothing(b)=5/14$).  The existence of the coloring $\psi$ contradicts the assumption that $G$ is a counterexample.

\begin{figure}[!t]
\begin{center}
\includegraphics[width=50mm]{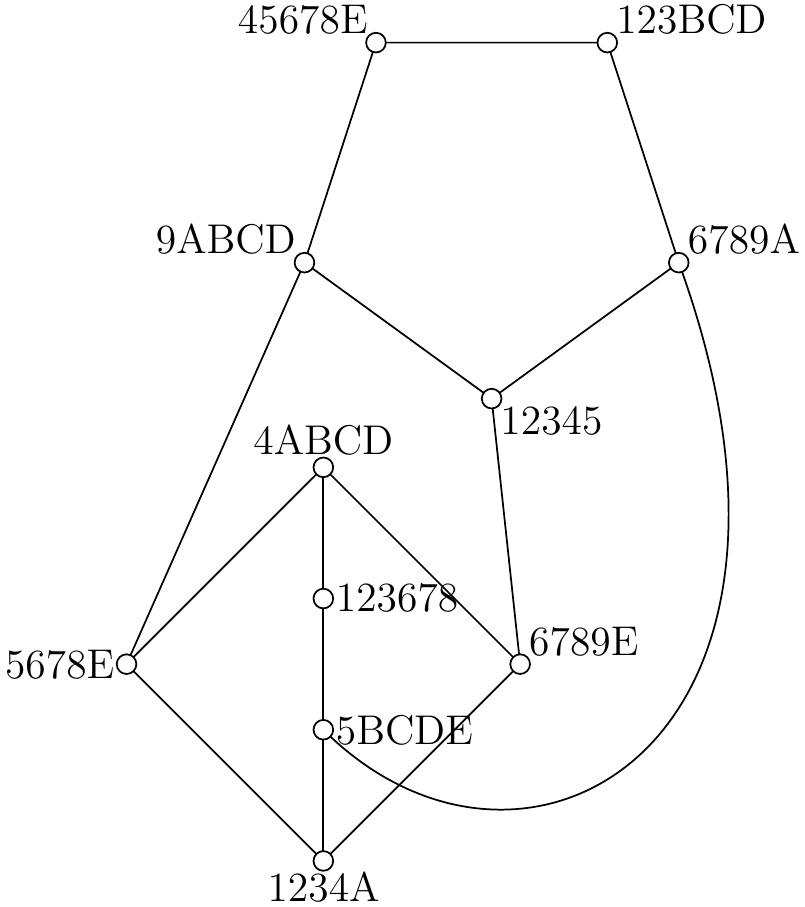}
\end{center}
\caption{Configuration from Lemma~\ref{lemma-no22}.}\label{fig-22k4}
\end{figure}

Finally, let us consider the case that $\varnothing$ is not a nail for $G'$.  Therefore, $G'$ contains a dangerous induced subgraph $H$ with at most
one safe special vertex.  By Corollary~\ref{cor-3safe}, $H$ has at least three special vertices that are safe in $G$.  It follows that
$H$ contains at least two of $x$, $z$ and $b$.  In particular, $H$ contains
$x$ or $z$, and since $x$ and $z$ have degree two in $G'$, we infer that $H$ contains
both of them.  Since $a$ and $c$ have degree three in $G'$, we deduce that $H$ is $K'_4$.  Let $s_1$ and $s_2$ be the special vertices of $H$
distinct from $x$ and $z$.  If both $s_1$ and $s_2$ have degree three in $G$, then since $\varnothing$ is not a nail for $G'$,
one of them, $s_i$, is adjacent to $y$ (that is, $s_i=b$); it follows that
$s_{3-i}$ is incident with a bridge in $G$, contrary to Lemma~\ref{lemma-conn}.
Hence, we can assume that $s_2$ has degree two in $G$.  By
Corollary~\ref{cor-3safe}, the vertex $s_1$ has degree three in $G$.
Recalling that $b$ also has degree three in $G$, we infer that
either $G$ is the graph depicted in Figure~\ref{fig-22k4}, or G has a
$2$-edge-cut formed by the edge $yb$ and one of the edges incident to $s_1$.
The latter case is excluded as it would contradict Lemma~\ref{lemma-3conn}, since
$b$ has degree three in $G$. The former case would imply that $G$ is
$(f_\varnothing,14)$-colorable,
as demonstrated in Figure~\ref{fig-22k4}. This contradiction concludes the proof.
\end{proof}

\begin{lemma}\label{lemma-nokp4}
No minimal counterexample to Theorem~\ref{thm-maingen} contains $K'_4$ as an induced subgraph.
\end{lemma}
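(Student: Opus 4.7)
The plan is to assume, for contradiction, that a minimal counterexample $G$ contains an induced copy $H$ of $K'_4$, with corners $a,b,c,d$ (the degree-$3$ vertices of $H$, forming the $4$-cycle $acbd$) and special vertices $x_1,x_2$ on the path $ax_1x_2b$ and $y_1,y_2$ on the path $cy_1y_2d$. By Corollary~\ref{cor-3safe} at least three of the special vertices have degree $3$ in $G$; call their external neighbours $u_1,u_2,v_1,v_2$ (with $u_i$ adjacent to $x_i$ and $v_j$ adjacent to $y_j$). Triangle-freeness of $G$ forces $u_1\neq u_2$ and $v_1\neq v_2$, since $u_1=u_2$ would create a triangle $u_1x_1x_2$.

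I would then apply the standard reduction used throughout this section: delete $V(H)$ to form $G':=G-V(H)$, promote the external neighbours of the safe special vertices to a nail $B'$ of $G'$, obtain an $(f_{B'},14t)$-coloring $\psi'$ of $G'$ by the minimality of $G$, and extend $\psi'$ across the eight vertices of $H$. In the generic case---all four special vertices safe, and $u_1,u_2,v_1,v_2$ pairwise distinct and pairwise non-adjacent in $G-V(H)$---I would set $B':=\{u_1,u_2,v_1,v_2\}$; verifying that $B'$ is a nail reduces, via Lemmas~\ref{lemma-conn}--\ref{lemma-no22} and Corollary~\ref{cor-3safe}, to showing that any dangerous induced subgraph of $G'$ not already present in $G$ must meet $B'$ in at least two $B'$-safe vertices. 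For the extension step, each $\psi'(u_i)$ and $\psi'(v_j)$ is a prescribed $5t$-subset of $\sset{14t}$, leaving a $9t$-list at each special vertex; I would first choose $\psi(x_1)$ and $\psi(y_1)$ so as to align with the colouring of the $K'_4$ panel (bottom left) of Figure~\ref{fig-coldang}, propagate to $\psi(x_2)$ and $\psi(y_2)$ via Proposition~\ref{prop-basic} applied to the length-$3$ paths $u_1x_1x_2u_2$ and $v_1y_1y_2v_2$, and finish by colouring the corners traversing the $4$-cycle $acbd$, exploiting the fact that the non-adjacent pairs $\{a,b\}$ and $\{c,d\}$ may share colours freely. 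The degenerate sub-cases (one unsafe special vertex, a coincidence $u_i=v_j$, or an edge among the $u_i,v_j$) would be handled separately, either by replacing $V(H)$ with a tailored gadget before reducing, or by exhibiting an explicit $(f_\varnothing,14)$-colouring of $G$ in the style of the earlier figures.

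The principal obstacle I expect is the extension step: although the $9t$-lists on the special vertices appear comfortably large, each corner has three $H$-neighbours whose colour classes and mutual intersections are dictated by $\psi'$, so the inclusion--exclusion budget for $\psi(a),\psi(b),\psi(c),\psi(d)$ is tight and must be tracked carefully through the path inequalities of Proposition~\ref{prop-basic}. A secondary, largely bookkeeping, difficulty is verifying that the nail condition on $G'$ survives every coincidence pattern among $u_1,u_2,v_1,v_2$, which is what forces the subcase analysis sketched above.
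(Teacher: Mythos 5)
There is a genuine gap in the extension step, and it is more fundamental than a tight but manageable budget. After deleting all eight vertices of $H$, the external neighbours $u_1$ and $u_2$ of $x_1$ and $x_2$ need not be adjacent in $G'$, so nothing prevents the inductively obtained colouring $\psi'$ from assigning $\psi'(u_1)=\psi'(u_2)$. But then $x_1$ and $x_2$ cannot both be coloured: each must avoid the same $5t$ colours, they are adjacent, and each needs $5t$ colours, which amounts to $10t$ pairwise-disjoint colours inside a residual list of size $9t$. Equivalently, the converse direction of Proposition~\ref{prop-basic} applied to the path $u_1x_1x_2u_2$ requires $\abs{\psi'(u_1)\cap\psi'(u_2)}\le 4t$, and the inductive hypothesis on $G'$ gives you no control over that intersection. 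The same obstruction occurs on $v_1y_1y_2v_2$. So the wholesale removal of $V(H)$ cannot work.

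This is precisely why the paper never deletes all of $V(H)$. When all four special vertices are safe, it removes only the four corner vertices; the special vertices then remain in $G'$ with degree two, so their demand rises to $6/14$, giving each an extra $t$ colours of slack, and --- crucially --- each adjacent pair of special vertices is retained \emph{inside} $G'$, so $\psi'$ automatically assigns them disjoint colour sets. The corner colours are then constructed out of the complements of those colour sets via the theorem of Tuza and Voigt~\cite{TuVo96}, a two-stage matching argument that your sketch does not anticipate. When exactly one special vertex is unsafe, the paper still retains at least one adjacent pair of special vertices in $G'$ (deleting six vertices rather than eight), again guaranteeing the disjointness that your plan would have to conjure from nothing. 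If you want to pursue a deletion-based argument here, you must keep at least one edge of $H$ between special vertices inside $G'$; otherwise the counterexample above defeats the extension.
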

\begin{proof}
Suppose, on the contrary, that a minimal counterexample $G$ contains $K'_4$ as an induced subgraph. That is,
$G$ contains a $4$-cycle $uvxy$ of vertices of degree three together with paths $uabx$ and $vcdy$.
By Corollary~\ref{cor-3safe}, we can assume that $b$, $c$ and $d$ have degree three in $G$.

Suppose first that we can choose the subgraph so that $a$ has degree two.
Let $b'$ be the neighbor of $b$ distinct
from $a$ and $x$.  Since we consider $K'_4$ as an induced subgraph of $G$, we have $c\neq b'\neq d$.  Let $G'=G-\{u,v,x,y,a,b\}$ and $B'=\{c,d,b'\}$.
Since $B'$ is a nail for $G'$, the minimality of $G$ implies that there exists an $(f_{B'}, 14t)$-coloring $\psi$ of $G'$
for a positive integer $t$.  By permuting the colors, we can assume that
$\psi(c)=\sset{5t}$ and $\psi(d)=\iset{5t+1}{10t}$.

Note that $\abs{\psi(b')}\le 6t$.  To extend $\psi$ to an
$(f_\varnothing,14t)$-coloring of $G$,
it suffices to show that one can choose sets $\psi(b), \psi(v), \psi(y)\subset \sset{14t}$ of size $5t$
disjoint from
$\psi(b')$, $\psi(c)$ and $\psi(d)$, respectively, in such a way that
$\abs{\psi(v)\cap\psi(y)}=4t$,
$\abs{(\psi(v)\cup \psi(y))\cup \psi(b)}=9t$ and $\abs{(\psi(v)\cup \psi(y))\cap \psi(b)}=2t$.
Indeed, if this is possible, then $\psi$ can be further extended to $a$,
$u$ and $x$ by Proposition~\ref{prop-basic}, which contradicts the assumption
that $G$ is a counterexample. It remains to show why the aforementioned sets
exist.
We consider two cases. First,
if $\abs{\psi(b')\cap\iset{10t+1}{14t}}\le 2t$,
then choose $\psi(b)$ in $\sset{14t}\setminus \psi(b')$ of size $5t$ so that
$\abs{\psi(b)\cap \iset{10t+1}{14t}}=2t$; furthermore, choose $\psi(v)$ and $\psi(y)$
of size $5t$ so that they are disjoint with $\psi(c)$ and $\psi(d)$,
respectively, and satisfy $\psi(v)\cap \psi(y)=\iset{10t+1}{14t}$ and
$(\psi(v)\cup \psi(y))\cap \psi(b)\subset \iset{10t+1}{14t}$.
Second, if $\abs{\psi(b')\cap\iset{10t+1}{14t}}>2t$, then note that $\abs{\psi(b')\cap \sset{10t}}<4t$; hence, we can choose
$\psi(b)$ in $\sset{10t}\setminus \psi(b')$ of size $5t$ so that $\abs{\psi(b)\cap
\sset{5t}}\ge t$ and $\abs{\psi(b)\cap \iset{5t+1}{10t}}\ge t$;
next, we choose $\psi(v)$ and $\psi(y)$ of size $5t$ so that they are disjoint
from $\psi(c)$ and $\psi(d)$, respectively, and satisfy $\psi(v)\cap
\psi(y)=\iset{10t+1}{14t}$ and
$(\psi(v)\cup \psi(y))\cap \sset{10t}\subset \psi(b)$.

The contradiction that we obtained in the previous paragraph shows that $a$ cannot have degree two.
Consequently, we can assume that for every occurrence of $K'_4$ as an induced subgraph in $G$,
all the special vertices are safe.  Let $G'=G-\{u,v,x,y\}$ and suppose first that $\varnothing$ is a nail
for $G'$. Then, the minimality of $G$ ensures that there exists an
$(f^{G'}_\varnothing,14t)$-coloring $\psi'$ of $G'$ for a positive integer $t$.
Let $L_u=\sset{14t}\setminus \psi(a)$, $L_x=\sset{14t}\setminus \psi(b)$, $L_v=\sset{14t}\setminus \psi(c)$ and $L_y=\sset{14t}\setminus \psi(d)$,
and note that $\abs{L_u}=\abs{L_v}=\abs{L_x}=\abs{L_y}=8t$.
By Tuza and Voigt~\cite[Theorem 2]{TuVo96}, there exist sets $A_u\subset L_u$, $A_v\subset L_v$, $A_x\subset L_x$ and $A_y\subset L_y$
such that $\abs{A_u}=\abs{A_v}=\abs{A_x}=\abs{A_y}=4t$ and $A_x\cup A_u$ is
disjoint from $A_y\cup A_v$.  Let $M_u=\sset{14t}\setminus (A_u\cup A_v\cup A_y)$,
$M_v=\sset{14t}\setminus (A_v\cup A_u\cup A_x)$, $M_x=\sset{14t}\setminus (A_x\cup A_v\cup A_y)$ and $M_y=\sset{14t}\setminus (A_y\cup A_u\cup A_x)$.
Each of these sets having size at least $2t$, applying again the result of Tuza and Voigt~\cite{TuVo96}, we infer
the existence of sets $B_u\subset M_u$, $B_v\subset M_v$, $B_x\subset M_x$ and $B_y\subset M_y$
such that $\abs{B_u}=\abs{B_v}=\abs{B_x}=\abs{B_y}=t$ and $B_x\cup B_u$ is
disjoint from $B_y\cup B_v$.  Let $\psi$ be defined as follows:
$\psi(z)=\psi'(z)$ for $z\in V(G)\setminus \{a,b,c,d,u,v,x,y\}$, $\psi(a)=\psi'(a)\setminus B_u$, $\psi(b)=\psi'(b)\setminus B_x$, $\psi(c)=\psi'(c)\setminus B_v$, $\psi(d)=\psi'(d)\setminus B_y$,
$\psi(u)=A_u\cup B_u$, $\psi(v)=A_v\cup B_v$, $\psi(x)=A_x\cup B_x$ and $\psi(y)=A_y\cup B_y$.  Then $\psi$ is an $(f^G_\varnothing,14t)$-coloring of $G$
(notice that $f^{G'}_\varnothing(z)=6/14$, while $f^G_\varnothing(z)=5/14$ whenever
$z\in\{a,b,c,d\}$).  This contradicts the assumption that $G$ is a counterexample.

\begin{figure}[!t]
\begin{center}
\includegraphics[width=50mm]{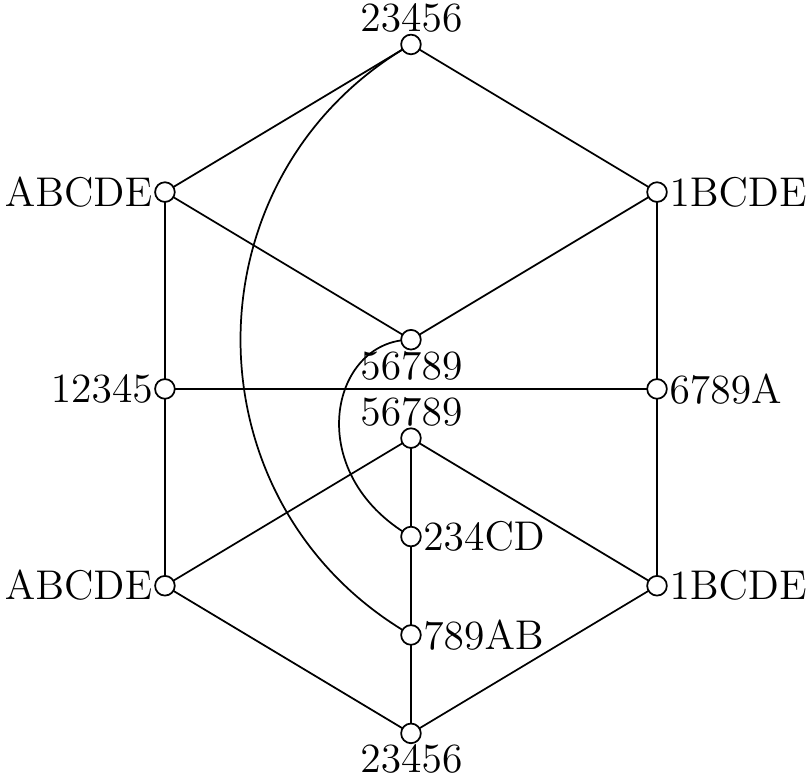}
\end{center}
\caption{A configuration from Lemma~\ref{lemma-nokp4}.}\label{fig-k4plus}
\end{figure}

Finally, it remains to consider the case where $G'$ contains a dangerous induced subgraph $H$ with at most one safe special vertex.
As $H$ contains at least two safe vertices in $G$, we can assume by symmetry
that $H$ contains $a$. Since $a$ has degree two in $G'$, the subgraph $H$ contains $b$ as well.
Suppose now that $H$ also contains at least one of $c$ and $d$ (and thus both
of them). Then $H$ must be isomorphic to $K_4'$. Indeed, since the subgraph of $G$ induced by
$\{u,v,x,y,a,b,c,d\}$ is isomorphic to $K'_4$, it follows that $\{a,b,c,d\}$
induces a matching, and thus $H$ cannot be a $5$-cycle.
We conclude that $H$ is isomorphic to $K'_4$, $G'=H$ and $G$ is the graph depicted in Figure~\ref{fig-k4plus}.  However, then $G$ is
$(f_\varnothing, 14)$-colorable, which is a contradiction.

Hence, neither $c$ nor $d$ belongs to $H$, and thus $H$ contains a special vertex that is unsafe in $G$.
Since the case that $G$ contains $K'_4$ with an unsafe special vertex has already been
excluded, it follows that $H$ is a $5$-cycle
$abb'sa'$, where (by Lemma~\ref{lemma-no22}) $a'$ and $b'$ have degree two and $s$ has degree three.  Let $G_1=G-\{u,v,x,y,a,b,a',b'\}$
and $B_1=\{c,d\}$.  Note that $s$ has degree $1$ in $G_1$.
As $B_1$ is a nail for $G_1$, the
minimality of $G$ ensures the existence of an $(f_{B_1},14t)$-coloring
$\psi_1$ of $G_1$ for a positive integer $t$.
By permuting the colors, we can assume that $\psi(c)=\sset{5t}$ and
$\psi(d)=\iset{5t+1}{10t}$.  Let us extend $\psi$ to $G$ as follows. First, we
delete from $\psi(s)$ an arbitrary subset of $2t$ colors, so that $\psi(s)$
has now size $5t$.
Set $\psi(v)=\iset{9t+1}{14t}$ and $\psi(y)=\sset{t}\cup\iset{10t+1}{14t}$.
Arbitrarily choose disjoint sets
$M_a$ in $\psi(s)\setminus \iset{9t+1}{12t}$ and $M_b$ in $\psi(s)\setminus
\sset{t}\cup\iset{12t+1}{14t}$
each of size $2t$.
Choose two disjoint subsets $\psi(a)$ and $\psi(b)$ of $\sset{14t}$, each of size
$5t$, so that $M_a\cup \sset{t}\cup\iset{12t+1}{14t}\subseteq \psi(a)$ and $M_b\cup
\iset{9t+1}{12t}\subseteq \psi(b)$.
Note that $\abs{\iset{t+1}{9t}\setminus \psi(z)}\ge 6t$ for $z\in\{a,b\}$;
hence, we can choose for $\psi(u)$ and $\psi(x)$ two sets of size $5t$, both
in $\iset{t+1}{9t}$
and disjoint from $\psi(a)$ and $\psi(b)$, respectively.
Furthermore, note that $\abs{\psi(a)\cup \psi(s)}\le 8t$ and $\abs{\psi(b)\cup \psi(s)}\le 8t$.  It follows that $\psi$ can be extended to $a'$ and $b'$ by Proposition~\ref{prop-basic}.
The obtained mapping $\psi$ is an $(f_\varnothing, 14t)$-coloring of $G$, which is a contradiction.
\end{proof}

\begin{lemma}\label{lemma-22ipm}
Let $G$ be a minimal counterexample to Theorem~\ref{thm-maingen}.  Let $v$ be
a vertex of $G$ and let $x$ and $y$ be two neighbors of $v$.
Suppose that $x$ and $y$ have degree two, and let $x'$ and $y'$ be their neighbors, respectively, distinct from $v$.  Then $x'\neq y'$
and $x'$ is adjacent to $y'$.
\end{lemma}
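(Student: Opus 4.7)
\emph{Plan.} First I would nail down the local structure. Since $x$ and $y$ have degree two and are adjacent to $v$, $x'$, and $y'$ respectively, Lemma~\ref{lemma-no22} forces $v$, $x'$, and $y'$ all to have degree three in $G$. In particular $v$ has a third neighbor $z$ distinct from $\{x,y,x',y'\}$. The two assertions ($x'\neq y'$ and $x'\sim y'$) will each be proved by contradiction: build a smaller subcubic triangle-free auxiliary graph, apply the minimality of $G$ to extract an $(f_\varnothing,14t)$-coloring, and extend.

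For $x'\neq y'$, suppose $x'=y'$, so that $vxx'y$ is an induced $4$-cycle (note $v\not\sim x'$, since otherwise $vxx'$ would be a triangle, so $G$ is not locally a~$K_4$-minus structure either). Let $G^\ast$ be obtained from $G-\{x,y\}$ by introducing a single new vertex $a$ adjacent to both $v$ and $x'$. Then $G^\ast$ is subcubic and triangle-free, and $|V(G^\ast)|=|V(G)|-1$. Assuming $\varnothing$ is a nail for $G^\ast$, minimality supplies an $(f_\varnothing,14t)$-coloring $\psi^\ast$ in which $\psi^\ast(v)$, $\psi^\ast(x')$ and $\psi^\ast(a)$ each have size $6t$, with $\psi^\ast(a)$ disjoint from both $\psi^\ast(v)$ and $\psi^\ast(x')$. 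Thus $\psi^\ast(v),\psi^\ast(x')\subseteq\sset{14t}\setminus\psi^\ast(a)$, a set of size $8t$, yielding $|\psi^\ast(v)\cap\psi^\ast(x')|\geq 4t$. I then choose $\psi(v)\subseteq\psi^\ast(v)$ and $\psi(x')\subseteq\psi^\ast(x')$ of size $5t$ each with intersection of size at least $4t$, so $|\psi(v)\cup\psi(x')|\leq 6t$. The complement of $\psi(v)\cup\psi(x')$ contains at least $8t$ colors, from which I select sets $\psi(x)$ and $\psi(y)$ of size $6t$ (they need not be disjoint since $x\not\sim y$). Leaving $\psi=\psi^\ast$ on all other vertices gives an $(f_\varnothing,14t)$-coloring of $G$, contradicting that $G$ is a counterexample.

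For $x'\sim y'$, suppose $x'\neq y'$ and $x'\not\sim y'$. The natural modification is $G^\dagger=(G-\{x,y,v\})+x'y'$, which is triangle-free provided $x'$ and $y'$ share no common neighbor in $G\setminus\{v,x,y\}$; in the alternative case I would instead delete only $x,y$ and splice in a short new path connecting $x'$ and $y'$ through fresh degree-two vertices, to replace the role previously played by the path $x'xvyy'$. In either case the resulting graph is subcubic, triangle-free, and has strictly fewer vertices than $G$. Minimality produces an $(f_\varnothing,14t)$-coloring $\psi^\dagger$ in which $\psi^\dagger(x')$ and $\psi^\dagger(y')$ are disjoint sets of size $5t$. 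I then set $\psi(x')=\psi^\dagger(x')$ and $\psi(y')=\psi^\dagger(y')$, and select $\psi(v)$ of size $5t$ disjoint from $\psi(z)$ and capturing at least $2t$ colors of each of $\psi(x')$ and $\psi(y')$; this last is possible because $|\psi(x')|+|\psi(y')|-|\psi(z)|\geq 4t$, so enough colors of $\psi(x')\cup\psi(y')$ lie outside $\psi(z)$. Finally, the remaining room in $\sset{14t}\setminus(\psi(v)\cup\psi(x'))$ and $\sset{14t}\setminus(\psi(v)\cup\psi(y'))$—each of size at least $6t$—accommodates the sets $\psi(x)$ and $\psi(y)$, again yielding a valid $(f_\varnothing,14t)$-coloring of $G$.

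The main obstacle in both parts is verifying that $\varnothing$ really is a nail for the auxiliary graph, i.e., that the modification does not create a dangerous induced subgraph with at most one safe special vertex. The only vertices whose degree in $G$ differs from their degree in the auxiliary graph are $v$, $x'$, $y'$ (whose degree may drop from three to two) and the newly introduced vertices. A case analysis, using Corollary~\ref{cor-3safe} (which supplies three safe special vertices inside $G$ for any dangerous induced subgraph not avoiding $\{v,x',y'\}$) together with Lemma~\ref{lemma-nokp4} (which forbids $K'_4$ as an induced subgraph of $G$), rules out each configuration of the hypothetical dangerous subgraph—either by forcing a second safe special vertex or by producing a contradiction with the established local structure around $v$.
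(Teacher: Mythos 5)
Your first part (showing $x'\neq y'$) takes a genuinely different route from the paper. The paper deletes only $x$ and takes $B'=\{x',v\}$ as a nail for $G-x$; then it extends the coloring by setting $\psi(x)=\psi(y)$, which works immediately because $x$ and $y$ are non-adjacent twins. Your construction replaces $x,y$ with a single fresh vertex $a$, getting $\abs{\psi^\ast(v)\cap\psi^\ast(x')}\ge 4t$ by inclusion--exclusion. The numeric bookkeeping checks out, so this part is plausible, but you owe a nail verification: in $G^\ast$ the three consecutive vertices $v,a,x'$ all have degree two, so any dangerous subgraph through them would have at most two remaining candidates for safe special vertices, and this needs to be ruled out. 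The paper's deletion is lighter precisely because $B'$ marks the new low-degree vertices as safe and sidesteps this.

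Your second part ($x'$ adjacent to $y'$) contains a genuine gap. You delete $v$ from the graph, obtaining $G^\dagger$, and then try to select $\psi(v)$ of size $5t$ disjoint from $\psi(z)$ and containing at least $2t$ colors from each of $\psi(x')$ and $\psi(y')$, citing the aggregate bound $\abs{\psi(x')}+\abs{\psi(y')}-\abs{\psi(z)}\ge 4t$. That bound is only on the union: nothing prevents $\psi(z)\supseteq\psi(x')$, in which case $\abs{\psi(x')\setminus\psi(z)}=0$ and there is no valid choice of $\psi(v)$. The paper avoids this by deleting only $x$ and adding the edge $x'y$: keeping $v$ and $y$ in $G'$ makes $uvy$ a path, so Proposition~\ref{prop-basic} gives $\abs{\psi'(u)\cap\psi'(y)}\ge 2t$, whence $\abs{\psi'(u)\setminus\psi'(y)}\le 3t$. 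Since $\psi'(y)$ is disjoint from both $\psi'(x')$ and $\psi'(y')$ (both are neighbors of $y$ in $G'$), this gives the \emph{individual} bounds $\abs{\psi'(x')\cap\psi'(u)}\le 3t$ and $\abs{\psi'(y')\cap\psi'(u)}\le 3t$, hence $\abs{\psi'(x')\setminus\psi'(u)}\ge 2t$ and $\abs{\psi'(y')\setminus\psi'(u)}\ge 2t$, which is exactly what is needed. Deleting $v$ throws this structural leverage away. In addition, you never establish that the third neighbor $z$ of $v$ has degree three; the paper proves this by a separate argument, and Lemma~\ref{lemma-no2t2}, which would settle it instantly, is proved only afterwards and is not available here. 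Finally, your fallback ``splice in a short new path'' in the alternative case is not worked out and risks creating new dangerous subgraphs whose nail status you have not checked.
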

\begin{proof}
The vertices $v$, $x'$ and $y'$ have degree three by Lemma~\ref{lemma-no22}.
If $x'=y'$, then let $G'=G-x$ and $B'=\{x',v\}$.  Since $B'$ is a nail for
$G'$, the minimality of $G$ ensures that there
exists an $f^{G'}_{B'}$-coloring $\psi$ of $G'$.  We can extend $\psi$ to an $f^G_\varnothing$-coloring of $G$ by setting $\psi(x)=\psi(y)$,
contradicting the assumption that $G$ is a counterexample.

Therefore, $x'\neq y'$.
Let $u$ be the neighbor of $v$ distinct from $x$ and $y$.
Our next goal is to prove that $u$ must
have degree three. Suppose, on the contrary, that $u$ has degree two, and let $u'$ be the neighbor
of $u$ distinct from $v$. Then $u'$ has degree $3$ and, by symmetry, we infer
that $x'\neq u'\neq y'$.  Let $G'=G-\{u,v,x,y\}$ and let $B'=\{u',x',y'\}$.
Since $B'$ is a nail for $G'$, the minimality of $G$ implies the existence of
an $(f^{G'}_{B'},14t)$-coloring $\psi$ of $G'$ for a positive integer $t$.
Note that $\abs{\psi(u')}=\abs{\psi(x')}=\abs{\psi(y')}=5t$.
For $i\in\{1,2,3\}$, let $S_i$ be the set of elements of $\sset{14t}$ that belong to exactly $i$ of the sets $\psi(u')$, $\psi(x')$ and $\psi(y')$.
Note that $\abs{S_1}+\abs{S_2}+\abs{S_3}\le 14t$ and $\abs{S_1}+2\abs{S_2}+3\abs{S_3}=15t$,
so $\abs{S_2}+2\abs{S_3}\ge t$.  Let $M\subset S_2\cup S_3$ be an arbitrary set
such that $\abs{M\cap S_2}+2\abs{M\cap S_3}\ge t$ and $\abs{M}\le t$.  Choose $M_u\subset\psi(u')\setminus M$, $M_x\subset\psi(x')\setminus M$ and $M_y\subset\psi(y')\setminus M$
arbitrarily so that $\abs{M\cap\psi(u')}+\abs{M_u}=\abs{M\cap\psi(x')}+\abs{M_x}=\abs{M\cap\psi(y')}+\abs{M_y}=2t$, and let $L=M\cup M_u\cup M_x\cup M_y$.
Thus
\begin{align*}
\abs{L}&\le \abs{M}+\abs{M_u}+\abs{M_x}+\abs{M_y}\\
&=6t+\abs{M}-\abs{M\cap\psi(u')}-\abs{M\cap\psi(x')}-\abs{M\cap\psi(y')}\\
&=6t+\abs{M}-2\abs{M\cap S_2}-3\abs{M\cap S_3}\\
&=6t-\abs{M\cap S_2}-2\abs{M\cap S_3}\\
&\le 5t.
\end{align*}
Let us choose $\psi(v)$ in $\sset{14t}$ of size $5t$ such that $L\subseteq
\psi(v)$.  Note that $\abs{\psi(v)\cap\psi(z)}\ge 2t$ for $z\in\{u',x',y'\}$;
hence, $\psi$ can be extended to $u$, $x$ and $y$ by Proposition~\ref{prop-basic}.
This yields an $f_\varnothing$-coloring of $G$, which is a contradiction.  Therefore, $u$ has degree three.

Now suppose, for a contradiction, that $x'$ is not adjacent to $y'$ in $G$.
Then, the graph $G'$ obtained from $G$ by removing $x$ and adding the edge
$x'y$ is triangle-free.  Let us show that $\varnothing$ is a nail for $G'$.
Consider a dangerous induced subgraph $H$ of $G'$.  If $H$ had at most one safe special vertex in $G'$, then $G'$ would
contain two adjacent vertices $a$ and $b$ of degree two.  Note that $v$ is the only vertex of $G'$ of degree two that has
degree three in $G$, and that both neighbors of $v$ in $G'$ have degree three.  It follows that $a$ and $b$ have degree two
in $G$ as well.  Furthermore, $y$ has degree three in $G'$, thus the edge $ab$ is distinct from $x'y$.  Therefore,
$a$ and $b$ would be adjacent vertices of degree two in $G$, contrary to Lemma~\ref{lemma-no22}.

By the minimality of $G$, there exists an $(f^{G'}_\varnothing,14t)$-coloring $\psi'$ of $G'$
for a positive integer $t$. Let us show that
$\abs{(\psi'(x')\cup\psi'(y'))\cap \psi'(u)}\le 3t$.
Indeed, Proposition~\ref{prop-basic} applied to the path $uvy$ ensures that
$\abs{\psi'(u)\cap\psi'(y)}\ge 2t$. Thus, as $\abs{\psi'(u)}=5t$, it follows that
$\abs{\psi'(u)\setminus\psi'(y)}\le 3t$. Noting that $\psi'(y)$ is disjoint from each of
$\psi'(x')$ and $\psi'(y')$, we see that
$\psi'(u) \cap (\psi'(x')\cup\psi'(y'))$ is contained in
$\psi'(u)\setminus\psi'(y)$, which yields the announced inequality.

Choose arbitrary sets $M_x$ in $\psi'(x')\setminus \psi'(u)$ and $M_y$ in
$\psi'(y')\setminus \psi'(u)$, each of size $2t$.
We define a coloring $\psi$ of $G$ as follows. Set $\psi(z)=\psi'(z)$ for each $z\in V(G)\setminus \{x,y,v\}$.
Choose $\psi(v)$ in $\sset{14t}\setminus\psi(u)$ of size $5t$ so that $M_x\cup M_y\subset \psi(v)$.
It holds that $\abs{\psi(x')\cap \psi(v)}\ge \abs{M_x}=2t$
and $\abs{\psi(y')\cap \psi(v)}\ge 2t$; hence, $\psi$ can be extended to $x$
and $y$ by Proposition~\ref{prop-basic}.
Observe that $\psi$ is an $(f^G_\varnothing,14t)$-coloring of $G$, which is a contradiction.
\end{proof}

\begin{lemma}\label{lemma-no2t2}
If $G$ is a minimal counterexample to Theorem~\ref{thm-maingen}, then every vertex of $G$ has at most one neighbor of degree two.
\end{lemma}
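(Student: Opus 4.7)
The plan is by contradiction: assume some vertex $v$ of $G$ has two neighbors $x,y$ of degree two; reduce to $G':=G-\{v,x,y\}$, apply minimality, and extend.

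\textbf{Structure.} By Lemma~\ref{lemma-no22} the vertex $v$ has degree three; let $u$ be its third neighbor and let $x',y'$ be the neighbors of $x,y$ distinct from $v$. Lemma~\ref{lemma-22ipm} yields $x'\neq y'$ and $x'y'\in E(G)$, so $v,x,x',y',y$ induce a $5$-cycle. If $u$ had degree two, with second neighbor $u'$, then Lemma~\ref{lemma-22ipm} applied to the pairs $(x,u)$ and $(y,u)$ (both being degree-two neighbors of $v$) would yield $x'u',y'u'\in E(G)$, so that $x'y'u'$ forms a triangle---impossible. Hence $u$ has degree three; by Lemma~\ref{lemma-no22} the endpoints $x',y'$ of the edge $x'y'$ also have degree three; and triangle-freeness forces $u$ to be adjacent to at most one of $x',y'$ (else the triangle $ux'y'$).

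\textbf{Reduction.} Set $G':=G-\{v,x,y\}$ and $B':=\{u,x',y'\}$; each vertex of $B'$ has degree two in $G'$. Let $H$ be a dangerous induced subgraph of $G'$. Then $H$ is induced in $G$, so by Corollary~\ref{cor-3safe} it contains at least three special vertices safe in $G$. Moreover every vertex of $V(G')$ adjacent in $G$ to an element of $\{v,x,y\}$ lies in $\{u,x',y'\}=B'$, so any safe-in-$G$ special vertex of $H$ remains $B'$-safe in $G'$. Hence $B'$ is a nail, and minimality (since $|V(G')|<|V(G)|$) furnishes an $(f^{G'}_{B'},14t)$-coloring $\psi$ of $G'$ for some positive integer $t$, satisfying $|\psi(u)|=|\psi(x')|=|\psi(y')|=5t$ and $\psi(x')\cap\psi(y')=\varnothing$.

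\textbf{Extension.} Proposition~\ref{prop-basic} reduces the task to choosing $\psi(v)$ of size $5t$ disjoint from $\psi(u)$ such that $|\psi(v)\cap\psi(x')|\ge 2t$ and $|\psi(v)\cap\psi(y')|\ge 2t$; then $\psi(x)$ and $\psi(y)$ can be taken of size $6t$ as the complements of $\psi(v)\cup\psi(x')$ and $\psi(v)\cup\psi(y')$. Because $\psi(v)\cap\psi(u)=\varnothing$, the $2t$ colors needed in each overlap must come from $\psi(x')\setminus\psi(u)$ and $\psi(y')\setminus\psi(u)$, which are disjoint. Whenever $u$ is adjacent in $G$ to $x'$ (respectively $y'$), the edge constraint forces $\psi(u)\cap\psi(x')=\varnothing$ (respectively $\psi(u)\cap\psi(y')=\varnothing$), so the corresponding set has full size $5t\ge 2t$ automatically. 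A careful count using $|\psi(u)|=5t$ and the disjointness of $\psi(x'),\psi(y')$ supplies the required $2t$ colors on the remaining side; assembling them yields an $f_\varnothing^G$-coloring of $G$, contradicting the choice of $G$.

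\textbf{Main obstacle.} The delicate point is the final extension: since $u$ is adjacent to at most one of $x',y'$, at least one of the inequalities $|\psi(x')\setminus\psi(u)|\ge 2t$ and $|\psi(y')\setminus\psi(u)|\ge 2t$ has no edge-forced justification, and an unfavorable $\psi$ could in principle place $\psi(u)$ entirely inside $\psi(x')$ or $\psi(y')$. To force a compatible coloring one must either split into sub-cases depending on the adjacencies of $u$ with $x',y',x'',y''$ (where $x'',y''$ denote the third neighbors of $x',y'$), or enrich the reduction---for instance by also deleting $u$ and working with the enlarged nail $\{x',y',u_1,u_2\}$, where $u_1,u_2$ are the remaining neighbors of $u$, or by augmenting $G'$ with a temporary edge such as $ux'$ when this preserves triangle-freeness---so that the needed intersection pattern is imposed directly by an adjacency in the reduced graph.
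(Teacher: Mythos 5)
Your reduction is exactly the paper's: set $G'=G-\{v,x,y\}$ with nail $B'=\{u,x',y'\}$ and apply minimality. You also correctly identify the precise requirement from Proposition~\ref{prop-basic}: one must pick $\psi(v)$ of size $5t$ disjoint from $\psi(u)$ with $\abs{\psi(v)\cap\psi(x')}\ge 2t$ and $\abs{\psi(v)\cap\psi(y')}\ge 2t$. But the ``careful count'' you invoke does not exist: nothing prevents an $(f_{B'},14t)$-coloring of $G'$ from having $\psi(u)=\psi(x')$ (if $u$ and $x'$ are nonadjacent in $G'$, both have demand $5/14$ and both are merely disjoint from $\psi(y')$, so equality is possible), in which case $\psi(x')\setminus\psi(u)=\varnothing$ and the needed $2t$ colors do not exist. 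You flag this yourself, so the gap is honestly acknowledged, but it is a genuine gap, and none of your proposed workarounds (deleting $u$ as well, or temporarily adding an edge $ux'$) is shown to close it.

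The paper's fix is different and is the real content of the lemma: it \emph{re-colors $x'$ and $y'$} along with $v$, $x$, $y$, rather than keeping $\psi'(x')$ and $\psi'(y')$ fixed. Concretely, writing $a$ and $b$ for the third neighbors of $x'$ and $y'$ (outside the $5$-cycle $vxx'y'y$), one chooses disjoint sets $M_a\subset L_v\setminus\psi'(a)$ and $M_b\subset L_v\setminus\psi'(b)$ of size $2t$ inside $L_v=\sset{14t}\setminus\psi'(u)$, then builds new $\psi(x')\supseteq M_a$ and $\psi(y')\supseteq M_b$ compatible with $a$, $b$, and each other, and finally takes $\psi(v)\supseteq M_a\cup M_b$ inside $L_v$. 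This makes the intersections with $\psi(v)$ automatic. For this to go through one first needs $a$ and $b$ to have degree three; the paper establishes this with a separate argument (via Lemma~\ref{lemma-22ipm}, Lemma~\ref{lemma-3conn}, and the small graphs in Figure~\ref{fig-2t2}), a step entirely absent from your proposal. So while your setup is correct, the extension strategy is missing the key idea (re-coloring $x'$, $y'$) and its prerequisite (degree-three status of $a$, $b$).
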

\begin{proof}
Suppose, on the contrary, that a vertex $v$ of $G$ has two distinct neighbors $x$ and
$y$ of degree two in $G$.
Let $x'$ and $y'$ be the neighbors of $x$ and $y$, respectively, distinct from $v$.
Lemma~\ref{lemma-22ipm} implies that $vxx'y'y$ is a $5$-cycle.
Moreover, Lemma~\ref{lemma-no22} implies that $x'$, $y'$ and $v$ all have degree three.  Let $u$ be the neighbor of $v$ distinct from $x$ and $y$.  If $u$
had degree two, then by Lemma~\ref{lemma-22ipm}, its neighbor distinct from $v$ would be adjacent both to $x'$ and $y'$, and $G$ would contain
a triangle.  Hence, $u$ has degree three.  Let $a$ and $b$ be the neighbors of $x'$ and $y'$, respectively, not belonging to the path $xx'y'y$
(where possibly $a=u$ or $b=u$).

\begin{figure}[!t]
\begin{center}
\includegraphics[width=100mm]{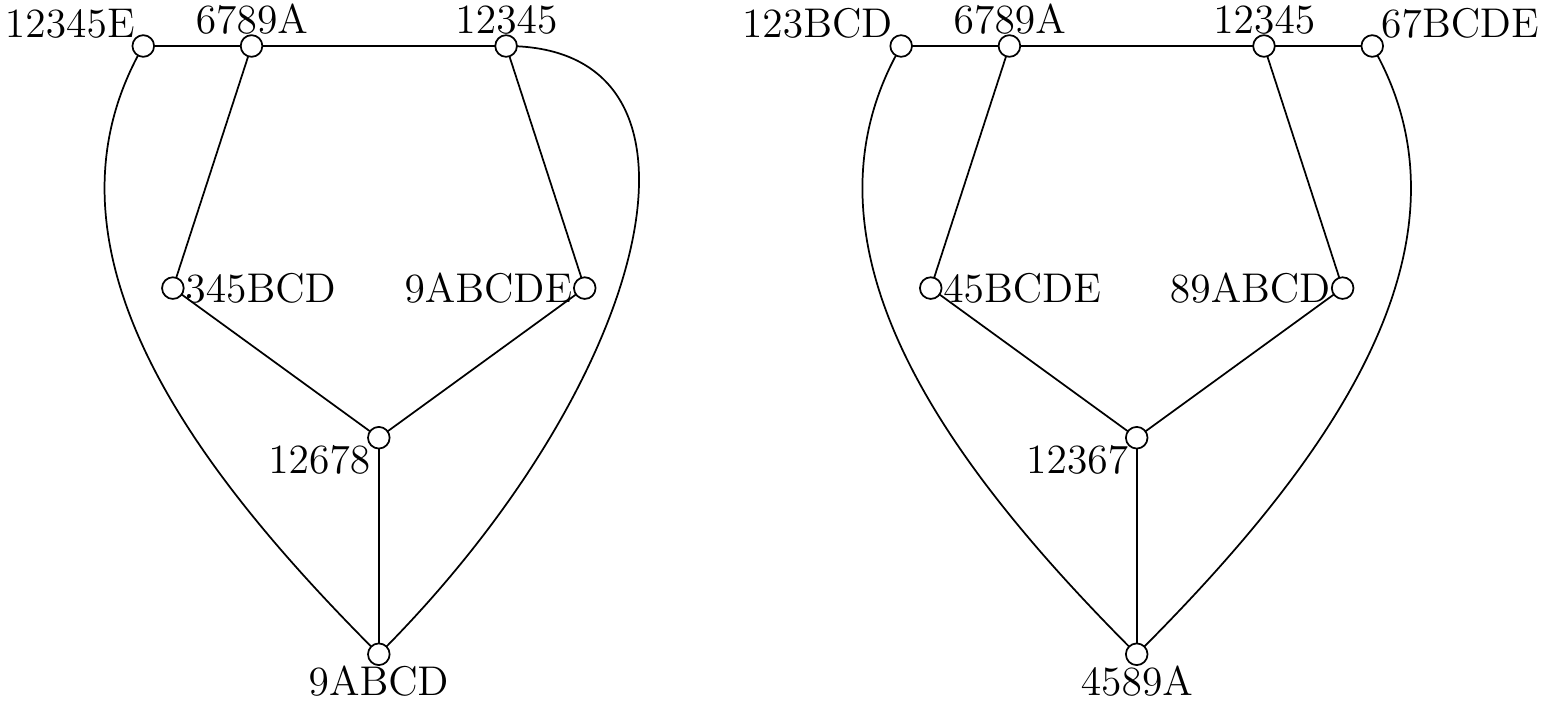}
\end{center}
\caption{Configurations from Lemma~\ref{lemma-no2t2}.}\label{fig-2t2}
\end{figure}

If $a$ has degree two, then Lemma~\ref{lemma-22ipm} yields that $a$ is adjacent to $u$.  By Lemmas~\ref{lemma-3conn} and \ref{lemma-no22},
it follows that either $b=u$, or $b$ has degree two and is adjacent to $u$ as
well.  However, $G$ would then be one of the graphs in Figure~\ref{fig-2t2},
which are both $(f_\varnothing, 14)$-colorable.  Therefore, $a$ has degree three
and, by symmetry, so does $b$.

Let $G'=G-\{x,y,v\}$ and $B'=\{x',y',u\}$.  Since $B'$ is a nail for $G'$, the minimality of $G$
ensures the existence of an $(f_{B'},14t)$-coloring $\psi'$ of $G'$ for a
positive integer $t$.
Let $L_v=\sset{14t}\setminus \psi'(u)$.  As $\abs{L_v}=9t$ and
$\abs{\psi'(a)}=\abs{\psi'(b)}=5t$, we can choose disjoint sets
$M_a$ in $L_v\setminus \psi'(a)$ and $M_b$ in $L_v\setminus \psi'(b)$ each of size $2t$.
We define a coloring $\psi$ of $G$ as follows.  For $z\in V(G)\setminus \{v,x,x',y,y'\}$, set $\psi(z)=\psi'(z)$.
Proposition~\ref{prop-basic} yields that $\abs{\psi'(a)\cap \psi'(b)}\le 4t$,
and thus we can choose $\psi(x')$ in $\sset{14t}\setminus (\psi'(a)\cup M_b)$ of size $5t$
so that $M_a\subset \psi(x')$ and $\abs{(\psi'(b)\setminus \psi'(a))\cap\psi(x')}\ge t$.  Let $L_{y'}=\sset{14t}\setminus (\psi(x')\cup \psi(b))$.
Note that $M_b\subset L_{y'}$ and $\abs{L_{y'}}\ge 5t$.  Choose
$\psi(y')$ in $L_{y'}$ of size $5t$ so that $M_b\subset \psi(y')$,
and $\psi(v)$ in $L_v$ of size $5t$ so that $M_a\cup M_b\subset \psi(v)$.
It follows that $\abs{\psi(v)\cap \psi(x')}\ge \abs{M_a}=2t$ and
$\abs{\psi(v)\cap \psi(y')}\ge\abs{M_b}=2t$; hence $\psi$ can be extended to $x$ and $y$
as well, by Proposition~\ref{prop-basic}.
However, $\psi$ is then an $(f_\varnothing, 14t)$-coloring of $G$, which is a contradiction.
\end{proof}

The following is a direct consequence of Lemmas~\ref{lemma-no22} and
\ref{lemma-no2t2}.
\begin{corollary}\label{cor-4safe}
In a minimal counterexample to Theorem~\ref{thm-maingen}, every $5$-cycle
contains at least four safe vertices.
\end{corollary}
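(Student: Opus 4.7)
The plan is to combine Lemmas~\ref{lemma-no22} and~\ref{lemma-no2t2} with the elementary combinatorics of the $5$-cycle. A $5$-cycle $C$ is dangerous and all five of its vertices are special; since $B=\varnothing$ for a minimal counterexample, a vertex of $C$ is safe exactly when it has degree three in $G$. So the statement to prove is that at most one vertex of any induced $5$-cycle has degree two in $G$.

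First I would record the structural constraints that the two lemmas impose on the degree-two vertices lying on $C$. By Lemma~\ref{lemma-no22}, the set of degree-two vertices of $G$ is independent, so in particular the degree-two vertices of $C$ form an independent subset of $V(C)$. By Lemma~\ref{lemma-no2t2}, no vertex of $G$ has two degree-two neighbours; applied inside $C$ this says that no vertex of $C$ can have both of its cycle-neighbours of degree two.

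Next I would exploit the geometry of the $5$-cycle: in $C=v_1v_2v_3v_4v_5$, any two non-adjacent vertices are at distance exactly two and therefore share a unique common neighbour, which itself lies on $C$. Suppose for contradiction that two distinct vertices of $C$ have degree two in $G$. By the first observation they are non-adjacent, so by the geometric remark they have a common neighbour $w\in V(C)$; but then $w$ has two degree-two neighbours, contradicting Lemma~\ref{lemma-no2t2}. Hence at most one vertex of $C$ has degree two, so at least four of its vertices are safe, as required. There is no real obstacle here — the corollary is a short bookkeeping consequence of the two preceding lemmas.
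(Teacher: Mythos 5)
Your proof is correct and is exactly the argument the paper leaves implicit (it states the corollary is ``a direct consequence'' of Lemmas~\ref{lemma-no22} and~\ref{lemma-no2t2} without elaboration): two degree-two vertices on a $5$-cycle are non-adjacent by Lemma~\ref{lemma-no22}, hence share a common neighbour on the cycle, contradicting Lemma~\ref{lemma-no2t2}. The only very minor imprecision is ``share a \emph{unique} common neighbour'' --- uniqueness holds within $V(C)$ but need not hold in $G$ at this stage (girth~$\geq 5$ is established later); the argument only needs existence of a common neighbour on $C$, so nothing is affected.
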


We continue our study of the structure of minimal counterexamples that contain
vertices of degree two.
\begin{lemma}\label{lemma-col2}
Let $G$ be a minimal counterexample to Theorem~\ref{thm-maingen} and let $v\in V(G)$ have degree two.
Let $x$ and $y$ be the neighbors of $v$; let the neighbors of $x$ distinct from $v$ be $a$ and $b$,
and let the neighbors of $y$ distinct from $v$ be $c$ and $d$.  Then the
following hold.
\begin{enumerate}
\item\label{it:1} $\varnothing$ is a nail for $G-v$, as well as for $G-\{v,x,y\}$.
\item\label{it:2} The vertices $a$, $b$, $c$ and $d$ are pairwise distinct.
\item\label{it:3} We let $\ovf$ be the function defined by $\ovf(z)=f_\varnothing(z)$ for $z\in V(G)\setminus \{v,x,y,a,b,c,d\}$,
$\ovf(z)=4/14$ for $z\in\{a,b,c,d\}$, $\ovf(x)=\ovf(y)=8/14$ and $\ovf(v)=2/14$.  Then $G$ has an $\ovf$-coloring.
\end{enumerate}
\end{lemma}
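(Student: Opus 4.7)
The plan is to prove the three assertions in order, building on the structural lemmas already established.

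For part~\ref{it:1}, any dangerous induced subgraph $H$ of $G-v$ (or of $G-\{v,x,y\}$) is also an induced subgraph of $G$; by Lemma~\ref{lemma-nokp4} it cannot be $K'_4$, so $H$ must be a $5$-cycle, and Corollary~\ref{cor-4safe} gives it at least four vertices that are safe in $G$. Passing from $G$ to $G-v$ demotes safety only at neighbors of $v$, namely $x$ or $y$, so at least $4-2=2$ safe vertices of $H$ survive in $G-v$. For $G-\{v,x,y\}$, safety can be lost only at vertices of $\{a,b,c,d\}$, so the only risk is a $5$-cycle $H$ meeting $\{a,b,c,d\}$ in three or four of its vertices; a short case analysis (using Lemmas~\ref{lemma-no22} and~\ref{lemma-3conn} to pin down the attachments of such an $H$) reduces the problem to an explicitly $(f_\varnothing,14)$-colorable configuration, contradicting the minimality of $G$.

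For part~\ref{it:2}, suppose by contradiction that $\{a,b\}\cap\{c,d\}\neq\varnothing$; by symmetry, take $a=c$. Then $xay$ is a length-two path in $G-v$. Apply part~\ref{it:1} and minimality to get an $(f_\varnothing,14t)$-coloring $\psi$ of $G-v$. Proposition~\ref{prop-basic}(1) on $xay$ gives
\[\abs{\psi(x)\cap\psi(y)}\ge\bigl(f(x)+f(y)+f(a)-1\bigr)\cdot 14t,\]
which is $4t$ when $\deg_G(a)=2$; then $6t$ colors outside $\psi(x)\cup\psi(y)$ remain for $\psi(v)$ and we extend $\psi$ to an $f_\varnothing$-coloring of $G$, contradicting minimality. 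When $\deg_G(a)=3$ the bound is only $3t$; here one removes $\{v,a\}$ instead (or performs a short contraction), uses the shorter induced path together with Proposition~\ref{prop-basic} to find the missing overlap, and splices colorings together. This second case is the main obstacle of part~\ref{it:2}.

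For part~\ref{it:3}, apply part~\ref{it:1} and minimality to $G-v$ to obtain an $(f_\varnothing,14t)$-coloring $\psi$. Proposition~\ref{prop-basic}(1) applied to the length-two paths $axb$ and $cyd$ (where $x$ and $y$ have degree two in $G-v$) gives $\abs{\psi(a)\cup\psi(b)}\le 8t$ and $\abs{\psi(c)\cup\psi(d)}\le 8t$, whence
\[\abs{(\psi(a)\cup\psi(b))\cap(\psi(c)\cup\psi(d))}\ge 8t+8t-14t=2t,\]
and the same Proposition yields $\abs{\psi(a)\cap\psi(b)}\ge 2t$ and $\abs{\psi(c)\cap\psi(d)}\ge 2t$. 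Fix subsets $I_{ab}\subseteq\psi(a)\cap\psi(b)$, $I_{cd}\subseteq\psi(c)\cap\psi(d)$ and $W\subseteq(\psi(a)\cup\psi(b))\cap(\psi(c)\cup\psi(d))$, each of size $2t$, and then choose $4t$-subsets $\psi'(a)\subseteq\psi(a)$, $\psi'(b)\subseteq\psi(b)$, $\psi'(c)\subseteq\psi(c)$, $\psi'(d)\subseteq\psi(d)$ containing $I_{ab}$, $I_{cd}$ and the relevant pieces of $W$, so that $I_{ab}\subseteq\psi'(a)\cap\psi'(b)$, $I_{cd}\subseteq\psi'(c)\cap\psi'(d)$ and $W\subseteq(\psi'(a)\cup\psi'(b))\cap(\psi'(c)\cup\psi'(d))$; a direct counting argument shows this is feasible. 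Then $U':=\psi'(a)\cup\psi'(b)\cup\psi'(c)\cup\psi'(d)$ has size at most $6t+6t-2t=10t$, so $\sset{14t}\setminus U'$ has size at least $4t$; pick $Z$ of size $4t$ inside it and set $\psi'(x)\supseteq Z$ of size $8t$ inside $\sset{14t}\setminus(\psi'(a)\cup\psi'(b))$ and $\psi'(y)\supseteq Z$ of size $8t$ inside $\sset{14t}\setminus(\psi'(c)\cup\psi'(d))$. Then $\abs{\psi'(x)\cup\psi'(y)}\le 12t$, leaving a set of size $\ge 2t$ available for $\psi'(v)$; keeping $\psi$ on all remaining vertices yields the desired $\ovf$-coloring. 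The main technical step is the feasibility check for the three simultaneous overlap demands on the $4t$-subsets, and this is the main obstacle of part~\ref{it:3}.
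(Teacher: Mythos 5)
Your plan for part~\ref{it:1} matches the paper's: restrict attention to $5$-cycles via Lemma~\ref{lemma-nokp4}, invoke Corollary~\ref{cor-4safe}, and argue that the demoted vertices are confined to $\{x,y\}$ (resp.\ $\{a,b,c,d\}$), then dispatch the remaining configurations using Lemmas~\ref{lemma-conn} and \ref{lemma-3conn} and an explicit coloring.

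Part~\ref{it:2} is where your proposal genuinely breaks down. You split on $\deg_G(a)\in\{2,3\}$, but $a$ \emph{always} has degree three here: by Lemma~\ref{lemma-no2t2} the vertex $x$ has at most one neighbor of degree two, and $v$ already is one, so $a$ and $b$ (and symmetrically $c$ and $d$) are cubic. Thus your degree-two case is vacuous, and for the degree-three case you only offer ``remove $\{v,a\}$ instead (or perform a short contraction) \dots this second case is the main obstacle,'' which is not an argument. The correct observation, which the paper makes, is that the budget of $x$ and $y$ has slack: in $G'=G-v$ both $x$ and $y$ have demand $6/14$, whereas in $G$ they only need $5/14$. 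One picks a set $M$ of $t$ colors disjoint from $\psi'(a)$, sets $\psi(x)=\psi'(x)\setminus M$, $\psi(y)=\psi'(y)\setminus M$, and $\psi(v)=\psi'(a)\cup M$; this is a proper $(f^G_\varnothing,14t)$-coloring because $\psi'(a)$, $\psi'(x)$, $\psi'(y)$ and $M$ interact correctly (e.g.\ $\psi'(x)$ is disjoint from $\psi'(a)$ since $a=c$ is adjacent to $x$). You never use this slack; that is precisely the missing idea.

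Part~\ref{it:3} takes a different route from the paper. You first shrink $\psi(a),\ldots,\psi(d)$ to $4t$-subsets with prescribed overlap structure ($I_{ab}$, $I_{cd}$, $W$), then build $\psi(x),\psi(y),\psi(v)$ in the complements; the paper instead chooses $M$ for $v$ and then constructs $X=\psi(x)$ of size $8t$ with $\abs{X\cap\psi'(a)}\le t$ and $\abs{X\cap\psi'(b)}\le t$ by a case analysis on the sizes of $S_a=\psi'(a)\setminus(\psi'(b)\cup M)$, $S_b$, $S_{ab}$, setting $\psi(a)=\psi'(a)\setminus X$ afterwards. Your scheme might be made to work, but the step you flag as ``the main obstacle''---the simultaneous feasibility of $I_{ab}\subseteq\psi'(a)\cap\psi'(b)$, $I_{cd}\subseteq\psi'(c)\cap\psi'(d)$ and $W\subseteq(\psi'(a)\cup\psi'(b))\cap(\psi'(c)\cup\psi'(d))$ with $\abs{\psi'(z)}=4t$---is exactly the kind of tight counting (the budgets $I_{ab}\cup(W\cap\psi(a)\setminus\psi(b))$ can saturate $4t$) that the paper avoids by constructing $X$ directly, and you leave it unproved. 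Until you supply that feasibility argument, part~\ref{it:3} also has a genuine gap.
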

\begin{proof}
Note that $a$, $b$, $c$ and $d$ have degree three by Lemma~\ref{lemma-no2t2}.
Let us consider each part of the statement separately.

\ref{it:1}. Let $G'$ be either $G-v$ or $G-\{v,x,y\}$ and suppose that $H$ is a dangerous induced subgraph
of $G'$ containing at most one safe vertex.
Lemma~\ref{lemma-nokp4} implies that $H$ is a $5$-cycle. By Corollary~\ref{cor-4safe},
at least four of its vertices are safe in $G$.  It follows that $H$ contains at least
three vertices that have degree two in $G'$ and degree three in $G$.  There are only two such vertices if
$G'=G-v$.  Hence, we assume that $G'=G-\{v,x,y\}$.  If all vertices of $H$ are safe in $G$, then
$a,b,c,d\in V(H)$; however, the vertex of $H$ distinct from $a$, $b$, $c$ and
$d$ is then incident with a bridge in $G$, contrary to Lemma~\ref{lemma-conn}.

\begin{figure}[!t]
\begin{center}
\includegraphics[width=60mm]{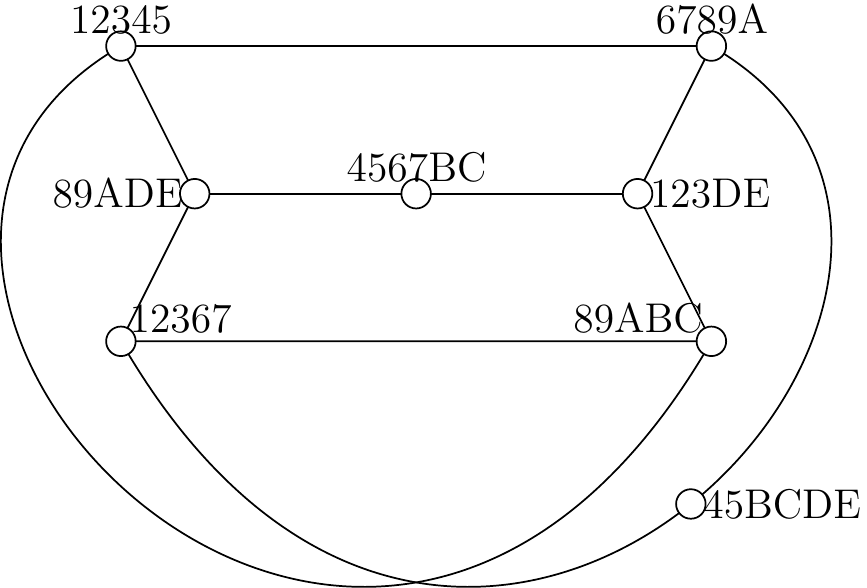}
\end{center}
\caption{A configuration from Lemma~\ref{lemma-col2}.}\label{fig-col2}
\end{figure}

Let us now consider the case where $H$ contains exactly four vertices of degree three in $G$.
By symmetry, we
can assume that $a,b,c\in V(H)$ (let us note that $a$, $b$, $c$ and $d$ are pairwise distinct, as three of them belong to $H$
and have degree exactly two).  Let $u$ be the vertex of $H$ distinct from $a$, $b$ and $c$ that is safe in $G$.
If $u\neq d$, then the edge $yd$ together with an edge incident with $u$ form
a $2$-edge-cut in $G$. Thus, Lemma~\ref{lemma-3conn} implies that
$d$ has degree two in $G$, contrary to Lemma~\ref{lemma-no2t2}.  It follows that $u=d$ and
$G$ is the graph depicted in Figure~\ref{fig-col2}.  However, $G$ is then
$(f_\varnothing,14)$-colorable, which is a contradiction.

\ref{it:2}. Suppose, on the contrary, that $a=c$.  Let $G'=G-v$.  As argued,
$\varnothing$ is a nail for $G'$, so the minimality of $G$ ensures the
existence of an $(f^{G'}_\varnothing,14t)$-coloring $\psi'$ of $G'$ for a
positive integer $t$.  Note that
$f^{G'}_\varnothing(x)=6/14=f^{G'}_\varnothing(y)$, while $f^G_\varnothing(x)=5/14=f^G_\varnothing(y)$.  Let $M$ be an arbitrary
subset of $\sset{14t}\setminus \psi'(a)$ of size $t$.  Define a coloring $\psi$ of $G$ as follows. For $z\in V(G)\setminus \{x,v,y\}$,
set $\psi(z)=\psi'(z)$; furthermore, set $\psi(x)=\psi'(x)\setminus M$, $\psi(y)=\psi'(y)\setminus M$ and $\psi(v)=\psi'(a)\cup M$.
Then $\psi$ is an $(f^G_\varnothing,14t)$-coloring of $G$, which is a contradiction.

\ref{it:3}. Again, let $G'=G-v$ and let $\psi'$ be an $(f^{G'}_\varnothing,14t)$-coloring of $G'$.
As $\abs{\psi'(x)}=6t=\abs{\psi'(y)}$,
there exists a subset $M$ of $\sset{14t}\setminus (\psi'(x)\cup \psi'(y))$ of size $2t$.
Let $S_a=\psi'(a)\setminus (\psi'(b)\cup M)$,
$S_b=\psi'(b)\setminus (\psi'(a)\cup M)$ and $S_{ab}=(\psi'(a)\cap\psi'(b))\setminus M$; note that $3t\le \abs{S_a}+\abs{S_{ab}}\le 5t$ and $3t\le \abs{S_b}+\abs{S_{ab}}\le 5t$.
Furthermore, since $\psi'(x)$ has size $6t$ and is disjoint from
$M\cup\psi'(a)\cup\psi'(b)$, it follows that
$14t-\abs{M}-\abs{S_a}-\abs{S_{ab}}-\abs{S_b}\ge 6t$, i.e.,
$\abs{S_a}+\abs{S_{ab}}+\abs{S_b}\le 6t$.
Our next goal is to choose a set $X$ in $\sset{14t}\setminus M$ of size $8t$
such that $\abs{X\cap \psi'(a)}\le t$ and $\abs{X\cap \psi'(b)}\le t$.
To this end, we consider several cases, regarding the sizes of $S_a$ and $S_b$.
If $\abs{S_a}\ge t$ and $\abs{S_b}\ge t$,
then choose $X$ so that $\abs{X\cap S_a}=\abs{X\cap S_b}=t$ and $X\cap S_{ab}=\varnothing$.  Otherwise, by symmetry, we can assume that $\abs{S_a}<t$; consequently,
$\abs{S_{ab}}\ge 3t-\abs{S_a}>2t$.  If $\abs{S_b}\ge t$, then let $X$ consist of $7t$ elements of $\sset{14t}\setminus (M\cup \psi'(b))$ and $t$ elements of $S_b$.
Finally, if both $S_a$ and $S_b$ have less than $t$ elements, supposing $\abs{S_a}\le\abs{S_b}<t$, then let $X$ consist of $S_a\cup S_b$ together with
$t-\abs{S_b}$ elements of $S_{ab}$ and $8t-\abs{S_a}-\abs{S_b}-(t-\abs{S_b})=7t-\abs{S_a}$ elements of $\sset{14t}\setminus (M\cup \psi'(a)\cup \psi'(b))$; this is possible,
since $\abs{\sset{14t}\setminus (M\cup \psi'(a)\cup
\psi'(b))}=12t-\abs{S_a}-(\abs{S_b}+\abs{S_{ab}})\ge 7t-\abs{S_a}$.  In each
case,
$\abs{X\cap \psi'(z)}\le t$ for $z\in\{a,b\}$, as desired.
Symmetrically, there exists a set $Y$ in $\sset{14t}\setminus M$ such that
$\abs{Y\cap\psi'(z)}\le t$ for $z\in\{c,d\}$.

An $(\ovf,14t)$-coloring of $G$ is now obtained as follows. Set $\psi(z)=\psi'(z)$ for $z\in V(G)\setminus \{a,b,c,d,v,x,y\}$,
$\psi(x)=X$, $\psi(v)=M$, $\psi(y)=Y$, $\psi(a)=\psi'(a)\setminus X$, $\psi(b)=\psi'(b)\setminus X$, $\psi(c)=\psi'(c)\setminus Y$ and $\psi(d)=\psi'(d)\setminus Y$.
\end{proof}

\begin{lemma}\label{lemma-3reg}
Every minimal counterexample to Theorem~\ref{thm-maingen} is $3$-regular.
\end{lemma}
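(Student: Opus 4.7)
The plan is to suppose for contradiction that some vertex $v\in V(G)$ has degree two. Let $x$ and $y$ be its neighbors. By Lemma~\ref{lemma-no22}, both $x$ and $y$ have degree three; let $\{a,b\}$ denote the remaining neighbors of $x$, and $\{c,d\}$ those of $y$. By Lemma~\ref{lemma-no2t2} the vertices $a,b,c,d$ all have degree three, and by Lemma~\ref{lemma-col2}(\ref{it:2}) they are pairwise distinct. Lemma~\ref{lemma-col2}(\ref{it:3}) then yields an $(\ovf,14t)$-coloring $\psi$ of $G$ for some positive integer $t$, with
\[
|\psi(v)|=2t,\quad |\psi(x)|=|\psi(y)|=8t,\quad |\psi(a)|=|\psi(b)|=|\psi(c)|=|\psi(d)|=4t.
\]
Since $\ovf$ coincides with $f_\varnothing$ outside the set $S:=\{v,x,y,a,b,c,d\}$, it will suffice to redefine $\psi$ only on $S$ so as to obtain an $(f_\varnothing,14t)$-coloring of $G$, contradicting that $G$ is a counterexample.

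I would first record two structural consequences of $\psi$. Because $\psi(v)$ has size $2t$ and is disjoint from $\psi(x)\cup\psi(y)$, one has $|\psi(x)\cap\psi(y)|\ge 4t$. Because $\psi(a)$ and $\psi(b)$ lie in the $6t$-element set $\sset{14t}\setminus\psi(x)$, their sizes force $|\psi(a)\cap\psi(b)|\ge 2t$; symmetrically $|\psi(c)\cap\psi(d)|\ge 2t$.

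Next I would fix a subset $M\subseteq\psi(x)\cap\psi(y)$ of size $4t$ and set $\psi'(v):=\psi(v)\cup M$, which has size $6t$ and is disjoint from both $\psi(x)\setminus M$ and $\psi(y)\setminus M$. It then remains to choose $\psi'(x),\psi'(y)\subseteq\sset{14t}\setminus\psi'(v)$ of size $5t$ each, and simultaneously to define $\psi'(z)$ of size $5t$ for $z\in\{a,b,c,d\}$, each disjoint from the appropriate $\psi'(x)$ or $\psi'(y)$ and from the two untouched color sets of its neighbors in $V(G)\setminus S$. Since $\sset{14t}\setminus\psi'(v)$ has size exactly $8t$ and $|\psi'(x)|+|\psi'(y)|=10t$, the required overlap $|\psi'(x)\cap\psi'(y)|\ge 2t$ needed to keep $\psi'(v)$ disjoint from both is automatic.

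The main obstacle is the simultaneous extension of the four sets $\psi(a),\psi(b),\psi(c),\psi(d)$ from size $4t$ to size $5t$. For $z\in\{a,b\}$, the new set $\psi'(z)$ must lie in $\sset{14t}\setminus(\psi'(x)\cup\psi(z_1)\cup\psi(z_2))$ (and analogously with $\psi'(y)$ when $z\in\{c,d\}$), where $z_1,z_2$ are the two neighbors of $z$ outside $S$; by Lemma~\ref{lemma-no2t2} these satisfy $|\psi(z_i)|\le 5t$. The freedom lies in how we place $\psi'(x)$ and $\psi'(y)$ inside the $8t$-element complement of $\psi'(v)$. I would realize this selection by a disjoint-choice argument in the spirit of Tuza--Voigt~\cite{TuVo96} (as used already in the proof of Lemma~\ref{lemma-nokp4}), leveraging the overlap bounds $|\psi(a)\cap\psi(b)|,|\psi(c)\cap\psi(d)|\ge 2t$ together with $|\psi(z_i)|\le 5t$ to guarantee that each of the four restricted regions contains at least $5t$ colors. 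In the tight corner cases (for instance when $|\psi(a_1)\cup\psi(a_2)|$ attains $10t$, leaving no immediate room to enlarge $\psi(a)$), I would instead argue that the forced local structure yields a configuration excluded by Lemma~\ref{lemma-nokp4} or by Lemma~\ref{lemma-no2t2}. I expect this compatibility analysis, rather than the redistribution construction itself, to be where the bulk of the technical work lies.
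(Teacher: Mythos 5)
Your approach is genuinely different from the paper's, and it has a gap that I believe is fatal. You propose to redefine the $\ovf$-coloring $\psi$ only on $S=\{v,x,y,a,b,c,d\}$ so as to produce an $(f_\varnothing,14t)$-coloring directly. The step you defer---enlarging $\psi(a),\psi(b),\psi(c),\psi(d)$ from size $4t$ to $5t$ while remaining disjoint from the untouched color sets of their other neighbors---is not merely the bulk of the technical work: it can fail outright. Concretely, let $a_1,a_2$ be the neighbors of $a$ other than $x$, and suppose $\psi(a_1)\cap\psi(a_2)=\varnothing$ with $\abs{\psi(a_1)}=\abs{\psi(a_2)}=5t$. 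Then $\abs{\psi(a_1)\cup\psi(a_2)}=10t$, and $\psi(a)$, being disjoint from both, must equal the complement $\sset{14t}\setminus(\psi(a_1)\cup\psi(a_2))$, which has size exactly $4t$. Any $\psi'(a)$ disjoint from $\psi(a_1)$ and $\psi(a_2)$ is therefore confined to a set of size $4t$, and cannot have size $5t$ regardless of how $\psi'(x)$ is chosen. Nothing in Lemmas~\ref{lemma-nokp4} or \ref{lemma-no2t2}, nor in the structure of a minimal counterexample, excludes two degree-three neighbors of $a$ whose $\ovf$-color sets happen to be disjoint (Lemma~\ref{lemma-col2}(\ref{it:3}) only asserts existence of some $\ovf$-coloring, not one with favorable intersections), so your plan to dispose of such corners by reduction to a forbidden configuration does not go through. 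You also implicitly assume the other neighbors of $a,b,c,d$ lie outside $S$, which need not hold---e.g.\ $a$ may be adjacent to $c$ along a $5$-cycle through $v$---adding further unaddressed constraints.

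The paper avoids this obstruction by not attempting a recoloring at all. It works with the characterization of $f_\varnothing$-colorability in Theorem~\ref{thm-eqchar}(d): for an arbitrary non-negative weight function $w$, set $w_2=w(a)+w(b)+w(c)+w(d)$, and first prove an auxiliary Assertion that $G$ has an independent set $X_0$ with
\[
w(X_0)\ge w_{f_\varnothing}+\tfrac{1}{14}\bigl(w_2-3w(x)-3w(y)+4w(v)\bigr),
\]
by a three-case analysis according to the order of $w(v),w(x),w(y)$, each case modifying $w$ on some of $a,b,c,d$, applying minimality to $G'=G-\{v,x,y\}$ (using Lemma~\ref{lemma-col2}(\ref{it:1})), and augmenting the resulting independent set by $v$, $x$, $y$, or a subset thereof. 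Separately, if $\psi$ is an $(\ovf,14t)$-coloring and $X_i=\sst{z}{i\in\psi(z)}$, a direct computation shows the average of $w(X_i)$ over $i\in\sset{14t}$ equals $w_{f_\varnothing}+\tfrac{1}{14}(-w_2+3w(x)+3w(y)-4w(v))$. The deficit of the $\ovf$-average is exactly the surplus built into $X_0$, so among $X_0,X_1,\dots,X_{14t}$ at least one independent set has weight at least $w_{f_\varnothing}$, as required. This averaging over the dual side, rather than any recoloring, is the heart of the paper's proof and is precisely what makes the step work even when the tight corner cases you identify occur.
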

\begin{proof}
Suppose, on the contrary, that $G$ is a minimal counterexample containing a
vertex $v$ of degree two.  By Lemmas~\ref{lemma-no22} and \ref{lemma-no2t2},
all the other
vertices of $G$ at distance at most two from $v$ have degree three.  Let $x$ and $y$ be the neighbors of $v$; let the neighbors of $x$ distinct from $v$ be $a$ and $b$,
and let the neighbors of $y$ distinct from $v$ be $c$ and $d$.  By Lemma~\ref{lemma-col2}, the vertices $a$, $b$, $c$ and $d$ are pairwise distinct.

In order to obtain a contradiction, we show that $G$ is
$f_\varnothing$-colorable.  To do so, we use the equivalent statement
given by Theorem~\ref{thm-eqchar}(d).
Let us consider an arbitrary non-negative weight function $w$ for $G$.  We need to show that $G$ contains an independent set $X$ with $w(X)\ge w_{f_\varnothing}$.
Let $w_2=w(a)+w(b)+w(c)+w(d)$.

\begin{assertion}
$G$ contains an independent set $X_0$ satisfying
\[w(X_0)\ge w_{f_\varnothing} + \frac{1}{14}(w_2-3w(x)-3w(y)+4w(v)).\]
\end{assertion}
To prove Assertion~*,
we discuss several cases depending on the values of $w$ on vertices at distance at most two from $v$.
By symmetry, we assume that $w(x)\le w(y)$.  Let $G'=G-\{v,x,y\}$, and recall that
$\varnothing$ is a nail for $G'$ by Lemma~\ref{lemma-col2}.

Suppose first that $w(y)\le w(v)$.
Note that
\begin{align*}
w_{f^{G'}_\varnothing}&=
\begin{aligned}[t]
w_{f^{G}_\varnothing}+\frac{1}{14}(6w(a)-5w(a)&+6w(b)-5w(b)+6w(c)-5w(c)\\
&+6w(d)-5w(d)-5w(x)-5w(y)-6w(v))
\end{aligned}\\
&=w_{f^{G}_\varnothing}+\frac{1}{14}(w_2-5w(x)-5w(y)-6w(v)).
\end{align*}
By the minimality of $G$,
there exists an independent set $P$ of $G'$ with $w(P)\ge
w_{f^{G'}_\varnothing}$.  Let $X_0=P\cup \{v\}$ and note that $X_0$ is an
independent set of $G$ such that
\begin{align*}
w(X_0)&=  w(P)+w(v) \ge w_{f^{G'}_\varnothing}+w(v)\\
&= w_{f^{G}_\varnothing}+\frac{1}{14}(w_2-5w(x)-5w(y)-6w(v))+w(v)\\
&= w_{f^{G}_\varnothing}+\frac{1}{14}(w_2-5w(x)-5w(y)+8w(v))\\
&=\begin{aligned}[t]
  w_{f^{G}_\varnothing}+\frac{1}{14}(w_2-3w(x)-3w(y)+4w(v))+ \frac{2}{14}(w(v)&-w(x))\\
 &  + \frac{2}{14}(w(v)-w(y))
 \end{aligned}\\
&\ge w_{f^{G}_\varnothing}+\frac{1}{14}(w_2-3w(x)-3w(y)+4w(v)).
\end{align*}

Next, suppose that $w(x)\le w(v)<w(y)$.  Let $w'$ be the (not necessarily non-negative) weight function
defined as follows: set $w'(z)=w(z)$ for $z\in V(G)\setminus \{c,d,v,x,y\}$, $w'(c)=w(c)-w(y)+w(v)$ and $w'(d)=w(d)-w(y)+w(v)$.
Note that
\begin{align*}
w'_{f^{G'}_\varnothing}&=\begin{aligned}[t]
w_{f^{G}_\varnothing}+\frac{1}{14}(6w'(a)-5w(a)&+6w'(b)-5w(b)+6w'(c)-5w(c)\\
&+6w'(d)-5w(d)-5w(x)-5w(y)-6w(v))
\end{aligned}\\
&=w_{f^{G}_\varnothing}+\frac{1}{14}(w_2-5w(x)-17w(y)+6w(v)).
\end{align*}
By the minimality of $G$ and Theorem~\ref{thm-eqchar}(c),
there exists an independent set $P$ of $G'$ with $w'(P)\ge w'_{f^{G'}_\varnothing}$.  Let $X_0$ be defined as follows: if $\{c,d\}\cap P\neq \varnothing$, then
let $X_0=P\cup\{v\}$, otherwise let $X_0=P\cup \{y\}$.  In the latter case,
$w(X_0)=w'(P)+w(y)$. In the former case (supposing $c\in P$),
it holds that $w(X_0)\ge w'(P)+(w(c)-w'(c))+w(v)=w'(P)+w(y)$ (the inequality holds, since if $d$ also belongs to $P$, then the right side changes by $w(d)-w'(d)=w(y)-w(v)>0$).
It follows that
\begin{align*}
w(X_0)&\ge w'(P)+w(y)\ge w'_{f^{G'}_\varnothing}+w(y)\\
&=w_{f^{G}_\varnothing}+\frac{1}{14}(w_2-5w(x)-17w(y)+6w(v))+w(y)\\
&=w_{f^{G}_\varnothing}+\frac{1}{14}(w_2-5w(x)-3w(y)+6w(v))\\
&=w_{f^{G}_\varnothing}+\frac{1}{14}(w_2-3w(x)-3w(y)+4w(v)+2(w(v)-w(x))\\
&\ge w_{f^{G}_\varnothing}+\frac{1}{14}(w_2-3w(x)-3w(y)+4w(v)).
\end{align*}

Finally, assume that $w(v)<w(x)\le w(y)$.
Let $w'$ be the (not necessarily non-negative) weight function
defined as follows: set $w'(z)=w(z)$ for $z\in V(G)\setminus \{a,b,c,d,v,x,y\}$, $w'(a)=w(a)-w(x)+w(v)$, $w'(b)=w(b)-w(x)+w(v)$, $w'(c)=w(c)-w(y)+w(v)$ and $w'(d)=w(d)-w(y)+w(v)$.
Note that
\begin{align*}
w'_{f^{G'}_\varnothing}&=\begin{aligned}[t]
w_{f^{G}_\varnothing}+\frac{1}{14}(6w'(a)-5w(a)&+6w'(b)-5w(b)+6w'(c)-5w(c)\\
&+6w'(d)-5w(d)-5w(x)-5w(y)-6w(v))
\end{aligned}\\
&=w_{f^{G}_\varnothing}+\frac{1}{14}(w_2-17w(x)-17w(y)+18w(v)).
\end{align*}
By the minimality of $G$ and Theorem~\ref{thm-eqchar}(c),
there exists an independent set $P$ of $G'$ with $w'(P)\ge w'_{f^{G'}_\varnothing}$.
We now show that there exists an independent set $X_0$ of $G$ such that
$w(X_0)\ge w'(P)+w(x)+w(y)-w(v)$. Indeed,
if $\{a,b\}\cap P\neq \varnothing$ and $\{c,d\}\cap P\neq \varnothing$
(supposing $a\in P$ and $c\in P$), then set $X_0=P\cup \{v\}$. It follows that
\begin{align*}
w(X_0)&\ge w'(P)+(w(a)-w'(a))+(w(c)-w'(c))+w(v)\\
&=w'(P)+w(x)+w(y)-w(v),
\end{align*}
as wanted. If $\{a,b\}\cap P\neq \varnothing$ (supposing $a\in P$) and $\{c,d\}\cap P=\varnothing$,
then let $X_0=P\cup \{y\}$. It follows that $w(X_0)\ge
w'(P)+(w(a)-w'(a))+w(y)=w'(P)+w(x)+w(y)-w(v)$, as wanted.  Similarly, if $\{a,b\}\cap P=\varnothing$ and $\{c,d\}\cap P\neq \varnothing$, then let $X_0=P\cup \{x\}$ and observe that
$w(X_0)\ge w'(P)+w(x)+w(y)-w(v)$. Last, if $\{a,b\}\cap P=\varnothing$ and
$\{c,d\}\cap P=\varnothing$, then let $X_0=P\cup \{x,y\}$. It follows that $w(X_0)=w'(P)+w(x)+w(y)\ge w'(P)+w(x)+w(y)-w(v)$.
In conclusion,
\begin{align*}
w(X_0)&\ge w'(P)+w(x)+w(y)-w(v)\\
&\ge w'_{f^{G'}_\varnothing}+w(x)+w(y)-w(v)\\
&= w_{f^{G}_\varnothing}+\frac{1}{14}(w_2-17w(x)-17w(y)+18w(v))+w(x)+w(y)-w(v)\\
&= w_{f^{G}_\varnothing}+\frac{1}{14}(w_2-3w(x)-3w(y)+4w(v)).
\end{align*}

Therefore, in all the cases the set $X_0$ has the required weight.
This concludes the proof of Assertion~*.

By Lemma~\ref{lemma-col2}, the graph
$G$ has an $(\ovf, 14t)$-coloring $\psi$ for a positive integer $t$.
For $i\in\sset{14t}$, let $X_i=\sst{z\in V(G)}{i\in\psi(z)}$; note that $X_i$ is an independent set
of $G$ and
\begin{align*}
\frac{1}{14t}\sum_{i=1}^{14t} w(X_i)&=\sum_{z\in V(G)} \ovf(z)w(z)\\
&=\sum_{z\in \{a,b,c,d,x,y,v\}}(\ovf(z)-f_\varnothing(z))w(z)+\sum_{z\in V(G)} f_\varnothing(z)w(z)\\
&=\frac{1}{14}(-w_2+3w(x)+3w(y)-4w(v))+\sum_{z\in V(G)} f_\varnothing(z)w(z)\\
&= w_{f_\varnothing}+\frac{1}{14}(-w_2+3w(x)+3w(y)-4w(v)).
\end{align*}

Together with Assertion~*, this implies that $w(X_i)\ge w_{f_\varnothing}$ for some
$i\in\iset{0}{14t}$.
Since this holds for every non-negative weight function for $G$, we conclude that
$G$ has an $f^{G}_\varnothing$-coloring, which is a contradiction.
\end{proof}

\begin{lemma}\label{lemma-girth5}
Every minimal counterexample to Theorem~\ref{thm-maingen} has girth at least five.
\end{lemma}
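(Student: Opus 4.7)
The plan is to suppose for contradiction that $G$ is a minimal counterexample containing a $4$-cycle $C=v_1v_2v_3v_4$. Since $G$ is $3$-regular by Lemma~\ref{lemma-3reg}, each $v_i$ has a unique neighbor $u_i\notin V(C)$, and triangle-freeness forces $u_i\notin\{v_{i-1},v_{i+1}\}$. I would then split into two cases according to whether some pair of \emph{opposite} $u_i$'s coincide.

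In the easier case, where $u_1,u_2,u_3,u_4$ are pairwise distinct, let $G'=G-V(C)$ and $B=\{u_1,u_2,u_3,u_4\}$. Each $u_i$ has degree two in $G'$, so I would first check that $B$ is a nail for $G'$: any dangerous induced subgraph $H\subseteq G'$ with fewer than two $B$-safe special vertices would, by Lemma~\ref{lemma-nokp4} and Corollary~\ref{cor-4safe}, contain a special vertex of $H$ that is special in $G'$ but not in $B$, and hence had degree two already in $G$ — contradicting Lemma~\ref{lemma-3reg}. By minimality there is then an $(f_B,14t)$-coloring $\psi$ of $G'$ with $|\psi(u_i)|=5t$, and the task is to extend $\psi$ to $V(C)$ by choosing $\psi(v_i)\subseteq L_i:=\sset{14t}\setminus\psi(u_i)$ of size $5t$ such that $\psi(v_i)\cap\psi(v_{i\pm 1})=\varnothing$. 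Each $L_i$ has size $9t$, and $C$ is bipartite with parts $\{v_1,v_3\}$ and $\{v_2,v_4\}$, so I would split $\sset{14t}$ into two disjoint halves of size $7t$ (one per part) and apply a Tuza--Voigt-style selection argument, analogous to the one used in Lemma~\ref{lemma-nokp4}, to pick appropriately shifted subsets from each $L_i$ that meet the disjointness demand. This yields an $(f_\varnothing,14t)$-coloring of $G$ and the required contradiction.

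In the harder case, some opposite pair coincides, say $u_1=u_3$. Then $V(C)\cup\{u_1\}$ induces a $K_{2,3}$ with bipartition $\{v_1,v_3\}$ versus $\{v_2,v_4,u_1\}$. Letting $w$ denote the third neighbor of $u_1$, I would analyze how $w$ relates to $u_2$ and $u_4$ (and to their further neighborhoods), using repeatedly that $G$ is triangle-free (so $w\notin\{v_2,v_4\}$), $K'_4$-free (Lemma~\ref{lemma-nokp4}), $3$-regular, and $3$-edge-connected up to small obstructions (Lemmas~\ref{lemma-conn} and \ref{lemma-3conn}). Typically, one of two outcomes should occur: either the local structure forces $G$ to be one of finitely many small graphs, which can be $(f_\varnothing,14)$-colored by hand (in the spirit of the figures accompanying Lemmas~\ref{lemma-no22}, \ref{lemma-nokp4}, and Corollary~\ref{cor-3safe}); or one can delete $V(C)\cup\{u_1\}$ and reinsert a short path between $u_2$ and $u_4$ to obtain a strictly smaller subcubic triangle-free graph $G^\star$ with a suitable nail $B^\star$, whose inductive $(f_{B^\star},14t)$-coloring extends back to $G$ using Proposition~\ref{prop-basic}.

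The principal obstacle is this second case: the $K_{2,3}$ is loose enough that no single reduction succeeds for all sub-configurations, so a careful enumeration — checking coincidences among $\{w,u_2,u_4\}$ and their neighbors, and producing explicit $14$-colorings in the borderline configurations — will be needed. By contrast, the extension step in the generic case is purely a list-coloring exercise on a $4$-cycle with large lists, parallel to what was already done for the core $4$-cycle of $K'_4$ in Lemma~\ref{lemma-nokp4}.
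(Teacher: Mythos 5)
Your case split matches the paper's, but both halves have problems. In the coincidence case ($u_1=u_3$, which is the paper's ``$a=b$''), you propose deleting $V(C)\cup\{u_1\}$ and launching a case enumeration that you yourself describe as incomplete. The paper does something much simpler: when $u_1=u_3$, the vertices $v_1$ and $v_3$ are non-adjacent and have identical neighborhoods $\{v_2,v_4,u_1\}$, so one deletes just $v_1$, takes $B=\{u_1,v_2,v_4\}$ as a nail for $G-v_1$ (trivially a nail since by Lemma~\ref{lemma-3reg} the only degree-two vertices of $G-v_1$ are the three neighbors of $v_1$), obtains an $f_B$-coloring $\psi$ by minimality, and sets $\psi(v_1):=\psi(v_3)$. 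That twin-vertex trick disposes of this case in two lines.

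In the all-distinct case, your choice of nail $B=\{u_1,u_2,u_3,u_4\}$ is the wrong one, and the extension step can genuinely fail. With that nail, each $|\psi(u_i)|$ is forced to be exactly $5t$, so $|L_i|=9t$ and you cannot touch the $u_i$'s during the extension. Now $u_1$ and $u_2$ need not be adjacent in $G'$, so an $(f_B,14t)$-coloring of $G'$ may well assign $\psi(u_1)=\psi(u_2)$. In that situation $L_1=L_2$ has size $9t$ and there is no way to choose two disjoint $5t$-subsets $\psi(v_1)\subseteq L_1$, $\psi(v_2)\subseteq L_2$; no ``Tuza--Voigt-style'' selection can rescue this, and global color permutation does not change whether $\psi(u_1)=\psi(u_2)$. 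The paper avoids the trap by taking the \emph{empty} nail for $G'=G-V(C)$, so each $u_i$ (degree two in $G'$) has demand $6/14$ and hence $|\psi'(u_i)|=6t$, $|L_i|=8t$. The extension then proceeds exactly as in the final case of Lemma~\ref{lemma-nokp4}: Tuza--Voigt gives sets $A_i\subseteq L_i$ of size $4t$ and then sets $B_i$ of size $t$ with the right disjointness pattern, one sets $\psi(v_i)=A_i\cup B_i$, and crucially one \emph{trims} $\psi(u_i):=\psi'(u_i)\setminus B_i$ down to size $5t$. It is precisely the freedom to modify the $\psi(u_i)$ that makes the extension possible, and your nail choice gives that freedom away. (A minor point: your nail check for $G'$ is correct, but citing Corollary~\ref{cor-4safe} is unnecessary — Lemma~\ref{lemma-3reg} alone shows any degree-two vertex of $G'$ is one of the $u_i$.)
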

\begin{proof}
Suppose, on the contrary, that $G$ is a minimal counterexample that contains a $4$-cycle $uvxy$.
Let $a$, $c$, $b$ and $d$ be the neighbors of $u$, $v$, $x$ and $y$, respectively, outside this $4$-cycle.

Since $G$ is triangle-free, $\{a,b\}\cap\{c,d\}=\varnothing$.
If $a=b$, then $u$ and $x$ have the same neighborhood in $G$ but they are not
adjacent. The set $B=\{a,v,y\}$ being a nail for $G-u$,
the minimality of $G$ implies that $G-u$ has an $f_B$-coloring $\psi$.
Setting $\psi(u)=\psi(x)$ yields
an $f_\varnothing$-coloring of $G$, which is a contradiction.

Therefore, $a\neq b$ and, symmetrically, $c\neq d$. It follows that $a$, $b$, $c$ and
$d$ are pairwise distinct.  Let $G'=G-\{u,v,x,y\}$.  Consider a dangerous
induced subgraph $H$ of $G'$. Lemma~\ref{lemma-nokp4} implies that
$H$ is a $5$-cycle.  Furthermore, by Lemma~\ref{lemma-conn}, not all of $a$, $b$, $c$ and $d$ belong to $V(H)$, as otherwise
the vertex of $H$ distinct from $a$, $b$, $c$ and $d$ would be incident with a bridge.  Therefore, $H$ contains at least two vertices
of degree three in $G'$.  It follows that $\varnothing$ is a nail for $G'$.
By the minimality of $G$, there exists an $(f^{G'}_\varnothing,14t)$-coloring
$\psi'$ of $G'$ for a positive integer $t$.
Let $A_u$, $A_v$, $A_x$, $A_y$, $B_u$, $B_v$, $B_x$ and $B_y$ be defined in the same way as in the proof of Lemma~\ref{lemma-nokp4}.
Let $\psi$ be the coloring of $G$ defined by
$\psi(z)=\psi'(z)$ for $z\in V(G)\setminus \{a,b,c,d,u,v,x,y\}$, $\psi(a)=\psi'(a)\setminus B_u$, $\psi(b)=\psi'(b)\setminus B_x$, $\psi(c)=\psi'(c)\setminus B_v$, $\psi(d)=\psi'(d)\setminus B_y$,
$\psi(u)=A_u\cup B_u$, $\psi(v)=A_v\cup B_v$, $\psi(x)=A_x\cup B_x$ and $\psi(y)=A_y\cup B_y$.  Then $\psi$ is an $(f^G_\varnothing,14t)$-coloring of $G$,
which is a contradiction.
\end{proof}

Finally, we are ready to prove our main result.

\begin{proof}[of Theorem~\ref{thm-maingen}]
If Theorem~\ref{thm-maingen} were false, there would exist a subcubic triangle-free graph $G$ with a nail $B$
forming a minimal counterexample to Theorem~\ref{thm-maingen}.  Then,
Lemma~\ref{lemma-nob} implies that $B=\varnothing$,
while Lemmas~\ref{lemma-3reg} and \ref{lemma-girth5} yield that $G$ is $3$-regular and contains no $4$-cycles.

Let $w$ be any non-negative weight function for $G$.  For $u,v\in V(G)$, let
$d(u,v)$ be the length of a shortest
path between $u$ and $v$.  For a vertex $v\in V(G)$, let
\[W_v=9w(v)-5\sum_{u\colon d(u,v)=1} w(u)+\sum_{u\colon d(u,v)=2} w(u).\]
Since $G$ is $3$-regular and has girth at least five, for each $u\in V(G)$, there are exactly three vertices $v$ with $d(u,v)=1$ and exactly six vertices
with $d(u,v)=2$; consequently,
\begin{align*}
\sum_{v\in V(G)} W_v&=9\sum_{v\in V(G)} w(v) - 5\sum_{v\in V(G)}\sum_{u\colon d(u,v)=1} w(u)+\sum_{v\in V(G)}\sum_{u\colon d(u,v)=2} w(u)\\
&=9\sum_{v\in V(G)} w(v) - 5\sum_{u\in V(G)}\sum_{v\colon d(u,v)=1} w(u)+\sum_{u\in V(G)}\sum_{v\colon d(u,v)=2} w(u)\\
&=9\sum_{v\in V(G)} w(v) - 5\sum_{u\in V(G)}3w(u)+\sum_{u\in V(G)}6w(u)\\
&=(9-15+6)\sum_{v\in V(G)} w(v)\\
&=0.
\end{align*}

Therefore, there exists a vertex $v\in V(G)$ such that $W_v\ge 0$.  Let $u_1$, $u_2$ and $u_3$ be the neighbors of $v$, and
let $x_1, \ldots, x_6$ be the six vertices of $G$ at distance exactly $2$
from $v$.
Set $G'=G-\{v,u_1,u_2,u_3\}$.  Consider a dangerous induced subgraph $H$ of
$G$.  By Lemma~\ref{lemma-nokp4}, we know that
$H$ is a $5$-cycle.  Let $S=V(H)\cap \{x_1,\ldots, x_6\}$.  If $\abs{S}\ge 4$, then at least two of the vertices in $S$ have a common
neighbor among $u_1$, $u_2$ and $u_3$. By symmetry, assume that $u_1$ is
adjacent to both $x_1$ and $x_2$.  Since $G$ is triangle-free, $x_1$ is not
adjacent to $x_2$, and thus these two vertices also have a common neighbor in
$H$.  Consequently, $G$ contains a $4$-cycle, which is a contradiction.
Therefore, each dangerous induced subgraph of $G'$ contains at least two special vertices of degree three.
It follows that $\varnothing$ is a nail for $G'$.

Note that
\begin{align*}
w_{f^{G'}_\varnothing}&=w_{f^{G}_\varnothing}+\frac{1}{14}\left(6\sum_{i=1}^6w(x_i)-5\sum_{i=1}^6w(x_i)-5\sum_{i=1}^3w(u_i)-5w(v)\right)\\
&= w_{f^{G}_\varnothing}+\frac{1}{14}(W_v-14w(v)).
\end{align*}
By the minimality of $G$ and Theorem~\ref{thm-eqchar}, there exists an independent set $P$ of $G'$ such that
$w(P)\ge w_{f^{G'}_\varnothing}$.  Let $X=P\cup \{v\}$.  Then
\begin{align*}
w(X)&=w(P)+w(v)\\
&\ge w_{f^{G'}_\varnothing}+w(v)\\
&=w_{f^{G}_\varnothing}+\frac{1}{14}(W_v-14w(v))+w(v)\\
&=w_{f^{G}_\varnothing}+\frac{1}{14}W_v\\
&\ge w_{f^{G}_\varnothing}.
\end{align*}

Therefore, for every non-negative weight function $w$ for $G$, there exists an independent set $X$
of $G$ such that $w(X)\ge w_{f^{G}_\varnothing}$.  By Theorem~\ref{thm-eqchar}, we conclude that
$G$ has an $f^{G}_\varnothing$-coloring.  This is a contradiction, showing that there exists
no counterexample to Theorem~\ref{thm-maingen}.
\end{proof}

\section{Conclusion}
We believe that the method developed in this paper may be relevant for other fractional colouring
problems, and in particular for Conjecture~\ref{conjecture-planar}.
However, a straightforward attempt to combine our ideas with those of
Heckman and Thomas~\cite{HeTh06} fails, since they use the integrality of the independence number which permits to
round up the obtained lower bounds.

In order to prove Theorem~\ref{thm-maingen}, we used several equivalent definitions of (weighted) fractional colorings.
As a consequence, our proof is not constructive and the following question is open.
\begin{problem}\label{prob-1}
Does there exist a polynomial-time algorithm to find a fractional $14/5$-coloring of a given input subcubic triangle-free graph?
\end{problem}
We pause here to note that, in general, even if a graph is known to have
fractional chromatic number at most $r$ and, thus,
an $(rN:N)$-coloring for some integer $N$, it is not even clear whether such a coloring can be written
in polynomial space. Indeed, all such values of $N$ may be exponential in the number
of vertices, as is the case, e.g., for the Mycielski graphs~\cite{LPU95}.  This issue would be avoided if the answer to the
following question is positive.

\begin{problem}\label{prob-3}
Does there exist an integer $t$ such that every subcubic triangle-free graph has a $(14t:5t)$-coloring?
\end{problem}

\bibliographystyle{amsplain}
\bibliography{fracsub}
\end{document}